\newtheorem{theorem}{Theorem}
\newtheorem{definition}[theorem]{Definition}
\newtheorem{lemma}[theorem]{Lemma}
\newtheorem{remark}[theorem]{Remark}
\newtheorem{mytheorem}{Theorem}
\newcommand{\rn}[1]{\mathbb{R}^{#1}}
\newcommand{\re}{ \mathbb{R}}
\newcommand{\beq}{\begin{equation}}
\newcommand{\bea}[1]{\begin{array}{#1} }
\newcommand{\eeq}{ \end{equation}}
\newcommand{\ea}{ \end{array}}
\newcommand{\ep}{\epsilon}
\newcommand{\es}{\emptyset}
\newcommand{\al}{\alpha}
\newcommand{\ga}{\gamma}
\newcommand{\de}{\delta}
\newcommand{\ds}{\displaystyle}
\newcommand{\ts}{\textstyle}
\newcommand{\rar }{\mbox{$\rightarrow$}}
\newcommand{\ran}{\rangle}
\newcommand{\lan}{\langle}
\newcommand{\Ga}{\Gamma}
\newcommand{\la}{\lambda}
\newcommand{\La}{\Lambda}
\newcommand{\ar}{\partial}
\newcommand{\si}{\sigma}
\newcommand{\Si}{\Sigma}
\newcommand{\om}{\omega}
\newcommand{\Om}{\Omega}
\newcommand{\be}{\beta}
\newcommand{\ph}{\phi}
\newcommand{\he}{\theta}
\newcommand{\He}{\Theta}
\newcommand{\Ph}{\Phi}
\newcommand{\hs}[1]{\mbox{$ \hspace{#1}$}}
\newcommand{\sem}{\setminus}
\newcommand{\ze}{\zeta}
\newcommand{\De}{\Delta}
\newcommand{\ti}{\tilde}
\renewcommand*{\backref}[1]{}
\renewcommand*{\backrefalt}[4]{%
 \ifcase #1 (Not cited.)%
   \or        (Cited on page~#2).%
    \else      (Cited on pages~#2).%
    \fi}
\begin{document} 

\title[Note on an eigenvalue problem with applications ]{Note on an eigenvalue problem with applications to  a   Minkowski  type  regularity problem in  $   \rn{n} $}

\author[M. Akman]{Murat Akman}
\address{{\bf Murat Akman}\\ Department of Mathematical Sciences, University of Essex\\ Wivenhoe Park, Colchester, Essex CO4 3SQ, UK} 
\email{murat.akman@essex.ac.uk}

\author[J. Lewis]{John Lewis}
\address{{\bf John Lewis} \\ Department of Mathematics \\ University of Kentucky \\ Lexington, Kentucky, 40506}
\email{johnl@uky.edu}

\author[A. Vogel]{Andrew Vogel}
\address{{\bf Andrew Vogel}\\ Department of Mathematics, Syracuse University \\  Syracuse, New York 13244}
\email{alvogel@syr.edu}


\keywords{Eigenvalue problem, homogeneous solutions to  $\mathcal{A}$-harmonic PDEs, Potentials,  capacities,  $\mathcal{A}$-harmonic Green's function, Minkowski problem, regularity in  Monge-Amp{\`e}re equation}
\subjclass[2010]{35J60,31B15,39B62,52A40,35J20,52A20,35J92}
\begin{abstract}
We   consider   existence and uniqueness of  homogeneous  solutions  $ u > 0   $   to  certain    PDE  of   $p$-Laplace type,   
$ p $ fixed,   $ n - 1 <p< \infty,  n  \geq 2, $     when  $ u $ is  a solution  in  $K(\al)\subset\mathbb{R}^n$ where
\[  
K (\al)  :=    \{ x = (x_1, \dots, x_n ) : 
x_1   >  \cos \al  \,   | x|  \}  \quad \mbox{for fixed}\, \,   \al \in (0,  \pi ],     
\]      
 with continuous boundary value zero  on  $ \partial   K ( \al ) \sem \{0\}$.       
 In our main result  we show that  if    $ u $  has  continuous boundary value $0$  on  $  \ar  K  ( \pi )$  
 then  $u$ is homogeneous of   degree   $  1   -   (n-1)/p  $  when $ p  >  n - 1. $      
 Applications of  this result  are given  to  a   Minkowski  type  regularity problem  in $  \rn{n}$ when $n=2,3$.  
\end{abstract}

\maketitle
\setcounter{tocdepth}{1}
\tableofcontents

\section{Introduction}
Let $ u > 0  $  be  a homogeneous  $p$-harmonic  function 
 in  the cone $K(\alpha)\subset \mathbb{R}^n$, $n\geq 2$, with continuous boundary value 0 on $ \ar K ( \al ) \sem \{0\}$ where
  \[  
  K(\alpha)  :=    \{ x = (x_1, \dots, x_n ) : x_1   >  \cos  \alpha \, | x|  \} \quad \mbox{for}\, \, \al \in (0, \pi].
  \]  
More    specifically,   for fixed $ p, 1 <  p < \infty$, $u$ is  
a  weak solution  to   $ \nabla \cdot \left( |\nabla u |^{(p-2)} \,   \nabla u \right)  =  0 $ 
 in  $  K ( \al ) $  and 
\begin{align} 
\label{1.1} 
  u ( t x ) =  t^{\lambda} u ( x )\, \,   \mbox{for some  real $ \la $   whenever $ t > 0$ and  $x \in K ( \al )$.}  
\end{align} 
 Given $ x \in    \mathbb{R}^{n}\sem \{0\}$,  introduce spherical  coordinates  $ r =   |x|$ and  $x_1 =   r  \cos \he$ for $ 0 \leq  \he  \leq \pi$.   
  If $u$ as in \eqref{1.1}   is  $p$-harmonic in  $ K ( \al )$  and   $ u ( 1, 0, \dots, 0) = 1$   
   then using rotational  invariance of  the  $p$-Laplace equation, it turns out that   $ u  $  has additionally the following form            
   \begin{align} 
    \label{1.2} 
     u ( x ) = u(r, \theta ) =  r^{\lambda}  \, \phi ( \theta )\quad \mbox{for}\, \,  0 \leq   \theta  <  \alpha  \, \, \mbox{and} \, \, r>0 
    \end{align}    
    with $\phi (0) = 1$ and $\phi  ( \alpha )  = 0$     for  some    $\la(\al)= \la \in ( - \infty, \infty ) $  and $ \ph \in C^\infty ( [0, \al ])$.    
             
   It  was first shown by    
   Krol' and Maz'ya in \cite{KM}  that if   $ 1 < p  \leq n - 1 $  and   $  \al \in (0, \pi )$, $\alpha$ is near enough $ \pi$,  
   then there exists a unique  solution  to  \eqref{1.1} in $ K ( \al ) $   of the special form  
   \eqref{1.2}  with $ \la ( \al ) > 0$.         Tolksdorf in   \cite{To}   showed   that  given  $ \al \in  (0, \pi)$, for $i=1,2$,
    there  exist  unique  $ \la_i $ with $ \la_2 <  0  <  \la_1$ and $\ph_i$   where  $ \phi_i$ is 
    infinitely differentiable on  $ [0, \al] $  satisfying $    \phi_i ( \al ) = 0$ and $\phi_i ( 0  ) = 1$  
    and   $ u_i (r, \he )   =  r^{\la_i}  \phi_i (\he)$  are  solutions to the $p$- Laplace equation in $ K (\al)$.       
    Also  Porretta  and  V{\'e}ron     gave  another proof  of  Tolksdorf's result in  \cite{PV}.   A  similar  study was made in   more general  Lipschitz cones  by   Gkikas   and   V{\'e}ron   in  \cite{GV}.    
  
Next  we discuss   what is  known about  ``eigenvalues'' $   \la $ in \eqref{1.2}  for various 
 $ \al$ and $n$.       Krol'   in \cite{ K}    (see also \cite{A})  used \eqref{1.2} and  separation of variables  to show  for  $ u $  as in  \eqref{1.2}   that            
\begin{align*}
0 =&  \frac{d}{d\he}  \left\{ [ \la^2 \phi^2 (\he) + (\ph')^{2} (\he) ]^{(p-2)/2}\, \ph' ( \he ) \, (\sin \he )^{n-2} \right\}   \\
 &+  \la [ \la (p-1) + (n-p)  ]   [ \la^2 \phi^2 (\he) + (\ph')^{2} (\he)  ]^{(p-2)/2}  \ph ( \he )  (\sin \he)^{n-2}.   
\end{align*}  
 Letting  $ \psi = \ph' / \ph $ in the above equation he obtained, the first order DE  
\begin{align} 
\label{1.3}     
\begin{split}
  0 = &       ( (p-1) \psi^2 + \la^2 )  \,  \psi'      \\
           & +  ( \la^2  + \psi^2 )   [(p-1)  \, \psi^2   +   (n-2) \cot  \he \,  \psi      +  \la^2   (p-1) + \la (n-p) ].
\end{split}
\end{align}
  If  $ n = 2 $ the cotangent  term in the above DE  goes out and  variables can be  separated in \eqref{1.3}  to  get    
\[
  \frac{  \la  d \psi}{ \la^2 + \psi^2 }     -  \frac{ (\la - 1 ) \, d \psi }{ \la^2  + \psi^2  + 
\la (2-p)/(p-1) } +  d \he = 0.
\]
 The  boundary conditions imply  that  $ \ph $  is decreasing on $ ( 0, \al ) $  so $ \psi (\al ) = - \infty$ and $\psi (0) = 0$.  Using this fact and integrating it follows that  
\begin{equation}  
\label{1.4} 
\pm 1  -     \frac{ \la - 1}{\sqrt{ \la^2 + \la (2 - p)/(p-1) }} =\frac{2 \al}{\pi}    
\end{equation}  
where $ + 1 $ is taken  if  $ \la > 0 $ and  $ - 1 $ if  $ \la < 0. $  For later discussion we note that if  $ \al = \pi/2$, i.e., $ K (\pi/2)$  is a half-space, then  \eqref{1.4}  gives 
\[    
 \la_1 = 1 \quad \mbox{and}\quad \la_2 = \frac{p - 3  - 2  \sqrt{ p^2 - 3p + 3 }} {3(p - 1)}.
 \]
We remark that $\lambda_1= \la_1 ( \pi/2) = 1 $ for $ n \geq  2 $ since $ x_1 = r \cos \he $ is  $p$-harmonic  for  $ 1 < p < \infty$.     Also if $ \al = \pi$ and $n = 2, $  i.e., 
$ K ( \pi ) = \mathbb{R}^2 \sem (-\infty, 0]$, then  \eqref{1.4}  yields 
\[
 \la_1 =  1  -  1/p \quad \mbox{and} \quad    \la_2 =  (1/16) \left(  7p - 16  -  \sqrt{ 81 p^2  - 288 p  + 288} \, \right)/ (p-1).
 \]
 For other values of   $\la_2= \la_2 ( \al )  $ when $ n = 2$,  see  \cite{LuV}.  For  $ n  \geq 3$, $\al =  \pi/2$,   and $ p = 2$,  one can use the Kelvin transformation to get $ \la_2 (\pi/2)  =  1 - n   $ 
 while  if $p = n, $  it follows from conformal invariance of  the  $n$-Laplacian that  $  \la_2 ( \pi/2) = - 1$.     Also if  $ p   =   (4 n - 2)/3$ then
 \[  - 2 \, \la_2 ( \pi  )  =   \frac{p +  1  - n}{p - 1} =  \be  =       \frac{n+1}{4n - 5}
 \]   
since $u ( r,  \theta)     =   r^{- \beta/2}    (\cos ( \theta/2) )^{ \be}$ in   \eqref{1.2}  for $ \alpha  = \pi. $  DeBlassie and  Smits  in   \cite{DS}    obtained estimates on
  $-  \la_2 ( \pi/2)$, $1 < p\neq 2  < \infty$,   by leaving out  the  cotangent term in  
   \eqref{1.3}.    
  In fact  their  solution to  the    DE  in   \eqref{1.3}   with   the  cotangent term omitted leads  to   a  supersolution   of the form \eqref{1.2}  for the  $p$-Laplace equation, so leads to a lower   estimate for  $ - \la_2 (\pi/2) $    in   \eqref{1.3}.  
   Upper and   lower  estimates  for  $  \la_2 ( \al )$ for $\al \in  (0,\pi/2]$   were  also  obtained by these authors  in    \cite{DS1},   by finding  $p$-harmonic  subsolution and supersolution of  the  form   $ r^k  \ti \ph ( \he )$ where $ k   < 0 $  and  $ \ti \ph $  is the solution to    
   \eqref{1.2}  when $ p = 2$  in $ K ( \al )$.     Sub and super $p$-harmonic solutions  of the form  
 $ r^k  \cos \he $   were  also found  in  $ K ( \pi/2) $  by  Llorente, Manfredi, Troy, and  Wu in  \cite{LMTW}. These estimates  were then used  to  find  upper and lower bounds for  $  \la_2 
(\pi/2)$ in  $ K (\pi/2). $ In  \cite{LMTW}, the authors also     
  use    shooting  methods  to   give a  strictly ODE  proof  for  existence of  a  solution   to 
\eqref{1.3} on $ [0, \pi/2] $   satisfying $ \psi (0) = 0$ and $\lim_{\he \to \pi/2} \psi ( \he ) = - \infty$  when  $p$ and $n $ are fixed with  $ 1 < p < \infty$ and $n  \geq 2$.   

 In this paper we   consider  problems  similar to the  above  for  certain  PDEs   of  $p$-Laplace  type.    Our  results,   when  specialized   to  the  $p$-Laplace  equation   for fixed $ p  >  n - 1, $  give a unique solution   $ u   $  to    \eqref{1.2}  in $ K ( \pi ) $  with  continuous boundary value $0$  on   $  \ar  K ( \pi )   $   and     $  \la =  \la_1 ( \pi  )  = 1 -  (n-1)/p $  when $ n \geq 3 $ 
(compare with  Krol's  $ n  = 2$ and $\al = \pi$ result).   To be more specific we need some notation.  
 Put 
  \[ 
  B (z, r ) = \{ y \in \mathbb R^{n} : | z  -  y | < r \}\quad  \mbox{whenever}\, \, z\in 
\mathbb R^{n} \, \, \mbox{and}\, \, r>0.  
\]  
Let  $ \lan \cdot,  \cdot  \ran $  denote  the standard inner
product on $ \mathbb R^{n} $ and  let  $  | y | = \lan y, y \ran^{1/2} $ be
the  Euclidean norm of $ y. $   Let  $dy$ denote  $n$-dimensional Lebesgue measure on    $ \mathbb R^{n} $ and   let  $\mathcal{H}^{\ga}$,  $0 <  \ga  \leq n, $  denote  $\ga$-dimensional  \textit{Hausdorff measure} on $ \rn{n}$ defined by 
\[
\mathcal{H}^{\ga }(E)=\lim_{\delta\to 0} \inf\left\{\sum_{j} r_j^{\ga}; \, \, E\subset\bigcup\limits_{j} B(x_j, r_j), \, \, r_j\leq \delta\right\}
\] 
where the  infimum is taken over all possible  $  \de$-covering  $\{B(x_j, r_j)\} $ of  $E$.  
If $ O  \subset \mathbb R^{n} $ is open and $ 1  \leq  q  \leq  \infty, $ then by   $
W^{1 ,q} ( O ) $ we denote the space of equivalence classes of functions
$ h $ with distributional gradient $ \nabla h= ( h_{y_1},
 \dots, h_{y_n} ), $ both of which are $q$-th power integrable on $ O. $  Let  
 \[
 \| h \|_{1,q} = \| h \|_q +  \| \, | \nabla h | \, \|_{q}
 \]
be the  norm in $ W^{1,q} ( O ) $ where $ \| \cdot \|_q $ is
the usual  Lebesgue $ q $ norm  of functions in the Lebesgue space $ L^q(O).$  Next let $ C^\infty_0 (O )$ be
 the set of infinitely differentiable functions with compact support in $
O $ and let  $ W^{1,q}_0 ( O ) $ be the closure of $ C^\infty_0 ( O ) $
in the norm of $ W^{1,q} ( O  ). $  
 Given $  p,  1 < p < \infty, $    suppose  $ f : \rn{n}  \rar  [0, \infty) $  satisfies:
\begin{align}
\label{1.5}
\begin{split}
(a)&\, \,   f  (  t   \eta )  =  t^p   f ( \eta ) \quad \mbox{when}\, \,   t  > 0 \, \, \mbox{and}\, \,  \eta \in  \rn{n}.
\\
(b)& \, \,   \mbox{There exists $  \ti a_1   \geq 1 $  such that if  $ \eta, \xi   \in  \rn{n} \sem \{0\},$  then } \\
&  \hs{.44in}      \ti a_1^{-1}  \,   | \xi  |^2     | \eta |^{p-2}  \,  \leq   \,  
\sum_{i, j = 1}^n  \frac{ \ar^2  f }{ \ar \eta_i \ar \eta_j}  ( \eta )   \,  \xi_i   \,  \xi_j     \leq             \ti a_1  \,  | \xi  |^2     | \eta |^{p-2}.   \\
(c)&\,\,   \mbox{There exists $  \ti a_2   \geq 1 $  such that  for  $ \mathcal{H}^{n}$-almost every   $ \eta   \in  B ( 0, 2 )  \sem  B( 0, 1/2),$  }  \\
& \hs{1.47in}   
     {\ds    
\sum_{i, j,k = 1}^n }  \left|  \frac{ \ar^3  f }{ \ar \eta_i \ar \eta_j \ar \eta_k}  ( \eta )   \right|    \leq             \ti a_2.  
\end{split}
\end{align}
Note  that  our assumptions  in  \eqref{1.5}  imply that   second  derivatives of  $  f $  are  Lipschitz and  homogeneous  of  degree  $ p -   2 $  on $  \rn{n}\sem\{0\}. $
 To  conform with the notation  in   \cite{AGHLV} and  \cite{ALSV}  we put  $  \mathcal{A} = \nabla f  $   for fixed $p$, $1<p<\infty$,   and  given an  open set 
 $ O  $ we say that $v  $ is $  \mathcal{A}$-harmonic in $ O $ provided $ v \in W^ {1,p} ( G ) $ for each open $ G $ with  $ \bar G \subset O $ and   
	\begin{align}
	\label{1.6}
		\int \lan    \mathcal{A}
(\nabla v(y)), \nabla \he ( y ) \ran \, dy = 0 \quad \mbox{whenever} \, \,\he \in W^{1, p}_0 ( G ).
			\end{align} 


As a short notation for \eqref{1.6} we write $\nabla \cdot \mathcal{A}(\nabla v)=0$ in $O$.  
Note that if   $ f (\eta  )  = p^{-1}  | \eta |^p$  then $ v  $  as in \eqref{1.6} is $p$-harmonic in  $ O.$ The  definition of    $ \mathcal{A}$-capacity,  a  $ \mathcal{A}$-capacitary  function,      and  of  the  
$ \mathcal{A}$-harmonic  Green's  function with pole at  $\infty$  are given in  section \ref{section2}.      
  



  In   this article,   we  first  prove 

\begin{mytheorem} 
\label{thmA} 
Fix $ f $ as  in   \eqref{1.5},  $ n \geq 2, \al  \in (0, \pi], $  and suppose   $  1 < p < \infty $  when $  \al  \in (0, \pi), $   while $ p  > n - 1 $  when  $ \al  = 
 \pi. $  For $ i = 1, 2, $   there exists  a unique  $\mathcal{A}$-harmonic function $ u_i  > 0  $ in  $ K  ( \al  )$ with  $ u_i ( 1, 0, \dots, 0)  = 1$ satisfying
\begin{align*}
 &(+) \, \,\, \, \, \, \,  \, \quad  \mbox{$ u_1 $  has continuous boundary value 0 on  $  \ar  K  ( \al )$.} \\
&(++) \, \, \quad  \lim_{ |x| \to \infty}  u_2 (x) = 0 \quad \mbox{and $u_2$  has  continuous boundary value 0  on  $ \ar K ( \al )  \sem \{0\}$}.
\end{align*}
Moreover,   \eqref{1.1}  holds with  $ \la  =  \la_i (\al)$, for $ i = 1, 2$,  where  $ \la_2 (\al)  <  0  <   \la_1 (\al )$ with the property that $|\lambda_i(\alpha)|$ is decreasing on $(0,\pi)$.  Finally,  $ \la_1 ( \pi ) = 1 - (n - 1)/p$ for $ p  >  n - 1 $    
and    
\begin{align} 
 \label{1.7}      
\la_1 (\al) -   1  +  \frac{n-1}{p}      \approx   (\pi -  \al)^{\frac{ p  + 1 - n  }{p - 1}} \quad  \mbox{as}\, \,    \al \to \pi.    
\end{align}  
 \end{mytheorem}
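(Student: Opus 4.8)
The plan is to split Theorem~\ref{thmA} into the several assertions it bundles together and attack them in the following order: (i)~existence and uniqueness of the pair $u_1,u_2$ together with the homogeneity \eqref{1.1}; (ii)~the sign and monotonicity of the eigenvalues $\lambda_i(\alpha)$ on $(0,\pi)$; (iii)~the identification $\lambda_1(\pi)=1-(n-1)/p$; and (iv)~the sharp asymptotic \eqref{1.7}. For step~(i) I would first treat $\alpha\in(0,\pi)$, where $K(\alpha)$ is a Lipschitz cone: standard $\mathcal{A}$-harmonic boundary Harnack / barrier arguments (Tolksdorf's method, as in \cite{To} for the $p$-Laplacian, adapted to $f$ satisfying \eqref{1.5}) give, for each prescribed behaviour at $0$ resp.\ $\infty$, a positive $\mathcal{A}$-harmonic function vanishing on the appropriate part of the boundary; uniqueness up to normalization comes from the boundary Harnack inequality plus a maximum-principle comparison, and homogeneity \eqref{1.1} then follows because $x\mapsto t^{-\lambda}u(tx)$ solves the same problem and hence must coincide with $u$ after matching the normalization. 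The eigenvalue $\lambda_i(\alpha)$ is defined by this homogeneity degree. The case $\alpha=\pi$ must be handled separately since $K(\pi)=\mathbb{R}^n\setminus(-\infty,0]$ is not Lipschitz at $0$; here the restriction $p>n-1$ is exactly what makes the point $\{0\}$ removable/negligible for $\mathcal{A}$-harmonic functions of the relevant growth (capacity of a point, or of the slit, being zero in the right sense), and one passes to the limit $\alpha\uparrow\pi$ using monotonicity and uniform estimates from step~(ii).

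For step~(ii), the sign $\lambda_2(\alpha)<0<\lambda_1(\alpha)$ is forced by the boundary conditions: $u_1$ is bounded near $0$ and unbounded at $\infty$ (so $\lambda_1>0$), while $u_2$ decays at $\infty$ (so $\lambda_2<0$); one rules out $\lambda_i=0$ by a Liouville-type argument (a bounded positive $\mathcal{A}$-harmonic function on the cone vanishing on the boundary is identically zero). Monotonicity of $|\lambda_i(\alpha)|$ in $\alpha$ I would prove by a domain-monotonicity comparison: if $\alpha<\alpha'$ then $K(\alpha)\subset K(\alpha')$, and comparing $u_1^{(\alpha)}$ with $u_1^{(\alpha')}$ on $K(\alpha)$ (both normalized at $(1,0,\dots,0)$, with $u_1^{(\alpha')}$ strictly positive on $\partial K(\alpha)\setminus\{0\}$) gives $u_1^{(\alpha')}\geq u_1^{(\alpha)}$ there, which on the ray $\theta=0$ reads $r^{\lambda_1(\alpha')}\geq r^{\lambda_1(\alpha)}$ for all $r>0$, forcing $\lambda_1(\alpha')\leq\lambda_1(\alpha)$; the same comparison with the roles of $0$ and $\infty$ swapped handles $\lambda_2$. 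Continuity of $\lambda_i$ in $\alpha$ (needed to take the limit at $\pi$) follows from the same comparison scheme, being monotone and bounded.

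Step~(iii), $\lambda_1(\pi)=1-(n-1)/p$, is the cleanest: one exhibits an explicit positive $\mathcal{A}$-harmonic function on $K(\pi)$ of the separated form \eqref{1.2} with this exponent. For the pure $p$-Laplacian the natural candidate is (a multiple of) a power of a half-space-type solution, or equivalently the function built from the fundamental solution; more structurally, $u(x)=|x|^{1-(n-1)/p}\phi(\theta)$ with $\phi$ the positive solution of the reduced ODE \eqref{1.3} (or its $f$-analogue) on $[0,\pi)$ with $\phi(0)=1$, $\phi(\pi)=0$ — and one checks the exponent $1-(n-1)/p$ is precisely the one for which $\phi$ does not blow up as $\theta\to\pi$, using $p>n-1$. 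By the uniqueness already proved in step~(i), this candidate is $u_1^{(\pi)}$ and hence $\lambda_1(\pi)=1-(n-1)/p$. For general $f$ one argues at the level of the ODE/variational eigenvalue: the Euler equation for the spherical eigenvalue problem associated to $f$ has, at $\alpha=\pi$, a solution regular at $\theta=\pi$ exactly when $\lambda=1-(n-1)/p$, independently of the precise form of $f$ (the borderline behaviour at the conical tip is governed only by the homogeneity degree $p$).

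The main obstacle, and the heart of the paper, is step~(iv): the sharp two-sided asymptotic \eqref{1.7}. I expect to obtain it by a careful matched-asymptotics / linearization analysis near $\alpha=\pi$ and near $\theta=\pi$. Writing $\lambda_1(\alpha)=1-(n-1)/p+\mu$ with $\mu=\mu(\alpha)\to 0^+$, one linearizes the spherical eigenvalue equation (the $f$-analogue of the DE obtained from \eqref{1.2}) about the $\alpha=\pi$ profile; the boundary condition $\phi(\alpha)=0$ at $\alpha=\pi-\varepsilon$ slightly inside the cone must be matched to the inner behaviour of the singular solution near $\theta=\pi$, and the indicial/Frobenius exponents of that singular ODE at $\theta=\pi$ are governed by the quantity $\beta=(p+1-n)/(p-1)$ — exactly the exponent appearing in \eqref{1.7}. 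Concretely, the leading term of $\phi$ near $\theta=\pi$ behaves like $c_1(\pi-\theta)^{\beta}+c_2+\cdots$ (for the borderline $\mu=0$ case) and a nonzero $\mu$ perturbs the ratio $c_1/c_2$ linearly in $\mu$, while imposing $\phi(\alpha)=0$ at $\pi-\theta=\pi-\alpha$ forces $c_2\approx -c_1(\pi-\alpha)^{\beta}$, i.e.\ $\mu\approx\mathrm{const}\cdot(\pi-\alpha)^{\beta}$; getting both the upper and lower bound (the $\approx$) requires quantitative control of the error terms in the Frobenius expansion and of the dependence of the solution on $\mu$, for which assumption \eqref{1.5}(c) (Lipschitz continuity of the second derivatives of $f$, hence $C^{1,1}$ structure of $\mathcal{A}$) is what makes the linearization rigorous. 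Rigorously one would likely construct explicit sub- and supersolutions of the form $r^{\lambda}\phi^{\pm}(\theta)$ with $\phi^{\pm}$ built from the Frobenius data, bracket $u_1^{(\alpha)}$ between them by the comparison principle, and read off \eqref{1.7}.
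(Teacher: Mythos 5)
Your steps (i)--(ii) follow essentially the paper's route (construction by approximation in truncated domains, boundary Harnack inequalities for uniqueness, and a maximum-principle comparison of cones for monotonicity), with two small slips worth flagging: the domain-monotonicity comparison only yields $u_1(\cdot,\alpha_1)\le \bar c\,u_1(\cdot,\alpha_2)$ on the \emph{bounded} set $K(\alpha_1)\cap B(0,1)$ --- one cannot compare boundary values on the whole unbounded cone, and your literal conclusion ``$r^{\lambda_1(\alpha')}\ge r^{\lambda_1(\alpha)}$ for all $r>0$'' would force equality of the exponents; the correct deduction sends $r\to0$ in the bounded comparison. Also, the role of $p>n-1$ at $\alpha=\pi$ is that the slit (a ray) then has \emph{positive} $p$-capacity, so that the zero boundary condition is non-vacuous; it is not a removability statement.

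The genuine gap is in steps (iii)--(iv), which are the heart of the theorem. Your entire strategy there --- the spherical ODE \eqref{1.3}, its Frobenius/indicial analysis at $\theta=\pi$, and matched asymptotics in $\mu=\lambda-1+(n-1)/p$ --- presupposes that homogeneous solutions separate variables as $r^{\lambda}\phi(\theta)$. That separation is available only for the rotationally invariant $p$-Laplacian; for a general $f$ satisfying \eqref{1.5} the operator $\nabla\cdot\nabla f(\nabla u)$ has no rotational symmetry, the restriction of $u_1$ to $\mathbb{S}^{n-1}$ depends on all angular variables, and there is no ODE to linearize. Your fallback sentence for general $f$ (``the Euler equation for the spherical eigenvalue problem \dots has a solution regular at $\theta=\pi$ exactly when $\lambda=1-(n-1)/p$, independently of the precise form of $f$'') is an assertion, not an argument. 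Even for the $p$-Laplacian with $n\ge3$, the cotangent term prevents explicit integration, and converting the Frobenius heuristic into the two-sided bound \eqref{1.7} would require explicit sub/supersolutions with quantitative error control that you do not supply --- and that is precisely the difficulty. The paper instead proves (iii)--(iv) by a purely PDE argument: a Rellich-type integral identity (Lemma \ref{lem4.1}, via the Hadamard variational formula) for the $\mathcal{A}$-capacitary or Green's function $U$ of $E=\{x_1\ge-1\}\setminus K(\pi-\ep)$; a global comparison $u\approx U$ on $B(0,1-2\de)\cap K(\pi-\ep)$ proved by a doubling-measure argument; and the lower-dimensional boundary Harnack estimate from \cite{LN2}, which gives $|\nabla u|\approx \ep^{(2-n)/(p-1)}$ near the slit. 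Combining these with homogeneity yields $1\approx \ep^{(p+1-n)/(p-1)}\bigl(p(\lambda-1)+n-1\bigr)^{-1}$, which simultaneously identifies $\lambda_1(\pi)=1-(n-1)/p$ and produces \eqref{1.7}. You guessed the correct exponent $(p+1-n)/(p-1)$, but in the paper it arises from the gradient/capacity estimate near the one-dimensional slit, not from an indicial equation.
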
    
\begin{remark}  
\label{rmk1}   
 We remark  that if  $ 1 < p \leq n - 1$ then a  slit has $p$-capacity zero in 
    $  \rn{n}$ for $n \geq 3$  and so  one can show  (see  \cite[chapter 2]{HKM}) that   there are no  solutions to   \eqref{1.3}. 
    In fact,  Krol' and Maz'ya  in the paper mentioned earlier obtained   that
\[
    \la_1 (  \al)  \approx 
\begin{cases}    
     ( \pi - \al )^{ \frac{ n -  1 - p}{p-1} } & \mbox {for}\, \, \,  1 < p < n -1 \\  
      - \frac{ 1}{  \log( \pi - \al) } & \mbox{for}\, \, \,  p =  n - 1
      \end{cases}
      \qquad \mbox{as}\, \,  \al \to \pi.
\] 
Here  and in \eqref{1.7},     $ \approx $ means the  ratio of the two functions is bounded above and below by positive constants depending only  on  $ p, n,  $  and 
possibly $ \ti a_1$, $\ti a_2$ in \eqref{1.5}.    We  regard   \eqref{1.7}   as  our main  contribution in  Theorem \ref{thm1}.  For an  outline of  our  efforts  in  trying to  prove this equality we refer  the reader to   \cite{ALV}.  As mentioned  above,  our  proof  of  existence and  uniqueness  in  Theorem  \ref{thmA}  for  $p$-harmonic functions   when $ 0 < \al  <  \pi$   is  considerably less general than the proof  in  \cite{PV} given  for  ``Lipschitz cones''.   Our  proof, however,  differs  somewhat  from the proof  of  these  authors 
(even for $p$-harmonic  functions).      We  include  a    proof  in  our setting  mainly    to facilitate the proof of  \eqref{1.7}  but also  for  completeness.  
\end{remark} 
In order  to give  an  application of  Theorem \ref{thmA}  we  need some background material.  Let $ E \subset \rn{n} $  be  a  convex  set  with nonempty interior.    
Then for  $ \mathcal{H}^{n-1} $  almost every $ x \in \ar E,  $   there is  a  well defined  
outer unit normal, $ \mathbf{g} ( x, E) $  to $ \ar E.  $   The function $ \mathbf{g}(\cdot, E): \ar E  \mapsto  \mathbb{S}^{n-1}$  
(whenever  defined) is called the  Gauss map for $ \ar E.$     Let  $ \mu $  be  a  finite   positive Borel  measure on  $ \mathbb{S}^{n-1}  = \{x \in \rn{n} :  |x| = 1\} $  satisfying
\begin{align}  
\label{1.8} 
\begin{split}
(i)&\, \,   {\ds \int_{ \mathbb{S}^{n-1}} } | \lan \he, \ze  \ran | \, d \mu ( \ze )  >  0  \quad \mbox{for all} \, \,  \he \in \mathbb{S}^{n-1},\\
(ii)& {\ds  \int_{ \mathbb{S}^{n-1}} } \ze   \, d \mu ( \ze )  = 0. 
\end{split}
\end{align}  
  Then in  \cite{ALSV}, it was shown that
\begin{theorem}
\label{thm1}  
Let   $  \mu $ be   as in    \eqref{1.8}, $f$  as in  \eqref{1.5},  and  $p$   fixed,  $ n \leq   p < \infty$.   
Then there exists a  compact              convex set $ E $ with non-empty interior   
and an $  \mathcal{A}$-harmonic Green's function  $U$ for $\mathbb{R}^{n}\setminus E$ with  pole at infinity satisfying  
\begin{align*}
 &     (a) \hs{.2in}  {\ds  \lim_{y\to x} } \nabla U (y)  =  \nabla U (x)  \, \, \mbox{exists for $\mathcal{H}^{n-1}$-almost every  $ x  \in   \ar E$}  \\ 
 &\hs{.43in}  \mbox{ as $ y \in \rn{n} \sem E $   approaches $ x $ non-tangentially.} \\  
 &(b)  \hs{.2in}  {\ds \int_{\ar E}  f ( \nabla U(x) ) \, d\mathcal{H}^{n-1}  <  \infty}.      \\ 
   &(c) \hs{.2in}     {\ds \int_{\mathbf{g}^{-1} ( K, E )  }  f ( \nabla  U(x)  )  \, d \mathcal{H} ^{n-1}} =  \mu  (K)  \quad  \mbox{whenever }      K \subset \mathbb{S}^{n-1}\, \,\mbox{is a Borel set}. \\
     &    (d) \hs{.2in}     \mbox{$E$  is  the unique  set up to translation for which  $ (c) $  holds.} 
\end{align*}
\end{theorem}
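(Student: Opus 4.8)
\noindent\emph{Plan of proof of Theorem~\ref{thm1}.}
I would follow the variational route to Minkowski-type problems --- used for volume by Minkowski and Aleksandrov, for Newtonian capacity by Jerison, and for $p$-capacity by Colesanti--Nystr\"om--Salani--Xiao--Yang--Zhang --- with the $\mathcal{A}$-harmonic input taken from \cite{ALSV} (with the boundary regularity going back to \cite{AGHLV}); indeed, this is a theorem of \cite{ALSV}. Because $n\le p$, the right notion of ``capacity'' of a compact convex set $E$ with nonempty interior is not an ordinary $p$-capacity (which degenerates in this range) but the functional $J(E):=\int_{\ar E} f(\nabla U)\,d\mathcal{H}^{n-1}$, where $U$ is the $\mathcal{A}$-harmonic Green's function for $\rn{n}\sem E$ with pole at infinity from Section~\ref{section2}; alongside it I would use the Borel measure $\mu_E$ on $\mathbb{S}^{n-1}$ defined by $\mu_E(K)=\int_{\mathbf{g}^{-1}(K,E)}f(\nabla U)\,d\mathcal{H}^{n-1}$, so that $J(E)=\mu_E(\mathbb{S}^{n-1})$ and (c) asks exactly that $\mu_E=\mu$. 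The proof would rest on four facts about these objects, all from \cite{ALSV}:
\textbf{(I)} existence, uniqueness and scale-covariance of $U$, continuity of $E\mapsto J(E)$ and weak continuity of $E\mapsto\mu_E$ under Hausdorff convergence of convex bodies, and a dilation law $\mu_{tE}=t^{\tau}\mu_E$ with an explicit exponent $\tau=\tau(n,p)\ne0$;
\textbf{(II)} boundary regularity in convex domains, namely that $\nabla U$ has finite non-tangential limits $\mathcal{H}^{n-1}$-a.e.\ on $\ar E$ --- this is exactly conclusion (a), and it is what makes $\mu_E$, hence $J(E)$, meaningful and shows $J(E)<\infty$ for a convex body, which is conclusion (b);
\textbf{(III)} a Hadamard-type first-variation formula: if $E_s$ has support function $h_{E_s}=h_{E_0}+s\psi$ with $\psi\in C^\infty(\mathbb{S}^{n-1})$ and $|s|$ small, then $\tfrac{d}{ds}\big|_{s=0}J(E_s)=c_{n,p}\int_{\mathbb{S}^{n-1}}\psi\,d\mu_{E_0}$, together with the Euler identity $\int_{\mathbb{S}^{n-1}}h_E\,d\mu_E=\gamma_{n,p}\,J(E)$ coming from the $p$-homogeneity of $f$;
\textbf{(IV)} a Brunn--Minkowski inequality making a suitable power of $J$ concave along Minkowski segments, with equality only for homothetic bodies.

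Granting \textbf{(I)}--\textbf{(IV)}, I would obtain existence of $E$ by minimizing the linear functional $E\mapsto\int_{\mathbb{S}^{n-1}}h_E\,d\mu$ over all compact convex sets with $J(E)=1$; this class is nonempty since $J$ is continuous and, by the dilation law, takes every positive value along any fixed body. Condition \eqref{1.8}(ii) makes this functional invariant under translations of $E$, so a minimizing sequence may be recentered; condition \eqref{1.8}(i) keeps the diameters along the sequence bounded --- so the Blaschke selection theorem applies, using the continuity in \textbf{(I)} --- and also rules out a lower-dimensional minimizer, so the minimizer $E_1$ is a genuine convex body with $J(E_1)=1$, just as in the classical and $p$-capacitary problems. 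A Lagrange-multiplier argument based on \textbf{(III)} then gives $\mu=\lambda\,\mu_{E_1}$ for a constant $\lambda$, positive since both measures are positive, and comparison of total masses gives $\lambda=\mu(\mathbb{S}^{n-1})$. Finally I would rescale: with $E:=tE_1$ and $t:=\lambda^{1/\tau}>0$, the dilation law gives $\mu_E=t^{\tau}\mu_{E_1}=\lambda\,\mu_{E_1}=\mu$, which is (c); then (b) follows from $\mu(\mathbb{S}^{n-1})<\infty$, and (a) is part of \textbf{(II)}.

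For uniqueness (d), I would take compact convex sets $E,E'$ with nonempty interior both satisfying (c), so $\mu_E=\mu_{E'}=\mu$ and hence $J(E)=\mu(\mathbb{S}^{n-1})=J(E')$. By \textbf{(III)} applied to the perturbation $\psi=h_{E'}-h_E$, the Euler identity, and $\mu_E=\mu_{E'}$, the derivative of $t\mapsto J\big((1-t)E+tE'\big)$ at $t=0^+$ equals $c_{n,p}\int_{\mathbb{S}^{n-1}}(h_{E'}-h_E)\,d\mu_E=c_{n,p}\gamma_{n,p}\big(J(E')-J(E)\big)=0$. Since \textbf{(IV)} makes a suitable power of $J$ concave along the segment $t\mapsto(1-t)E+tE'$ and that power takes equal values at $t=0$ and $t=1$, the vanishing of its left derivative forces it to be constant; thus equality holds in \textbf{(IV)}, and its rigidity clause yields $E'=\rho E+v$ for some $\rho>0$ and $v\in\rn{n}$. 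Then $\mu_{E'}=\mu_{\rho E}=\rho^{\tau}\mu_E$, which together with $\mu_{E'}=\mu_E$ and $\tau\ne0$ forces $\rho=1$, so $E'=E+v$. This proves (d).

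The step I expect to be the main obstacle is the first-variation formula \textbf{(III)}: one must control how $U$ and, above all, its non-tangential boundary gradient on $\ar E$ respond to a perturbation of the convex hypersurface $\ar E$, and since \eqref{1.5} supplies only a Lipschitz Hessian for $f$ --- so the equation $\nabla\cdot\mathcal{A}(\nabla v)=0$ cannot simply be differentiated in the deformation parameter --- this needs a careful combination of a priori boundary estimates, the non-tangential convergence in \textbf{(II)}, and approximation of both $\psi$ and $\ar E$ by smooth data. The equality case in the Brunn--Minkowski inequality \textbf{(IV)} for a general structure $\mathcal{A}$ --- lacking the linear or pure-power features that ease the Newtonian and $p$-Laplace cases --- is the other delicate point, and it is exactly what powers the rigidity used in (d).
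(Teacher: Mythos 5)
First, a framing point: this paper does not prove Theorem \ref{thm1} at all --- it is imported verbatim from \cite{ALSV} (just as Theorem \ref{thm2} is imported from \cite{AGHLV}), so there is no internal proof to compare against; the comparison can only be with the strategy of the cited source, which is indeed the variational scheme you describe (Hadamard-type variation, Blaschke selection under \eqref{1.8}, Lagrange multiplier, rescaling, and uniqueness via the equality case of a Brunn--Minkowski inequality), with (a), (b) supplied by the boundary regularity theory in convex (Lipschitz) domains.

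There is, however, a genuine defect in how you set the scheme up: in steps \textbf{(III)} and \textbf{(IV)} you attach the first-variation formula, the ``Euler identity'', and the Brunn--Minkowski concavity to the functional $J(E)=\int_{\ar E}f(\nabla U)\,d\mathcal{H}^{n-1}=\mu_E(\mathbb{S}^{n-1})$, but this is the analogue of \emph{total surface area} (the total mass of the Minkowski measure), not of the volume/capacity-type quantity that actually carries these properties. In \cite{ALSV} the variational functional is $\mathcal{C}_{\mathcal{A}}(E)$ of Remark \ref{rmk2.11} (equivalently $-k(\infty)$): its first variation produces the measure $f(\nabla U)\,d\mathcal{H}^{n-1}\circ\mathbf{g}^{-1}$, the Poincar\'e/Euler identity reads $\int_{\mathbb{S}^{n-1}}h_E\,d\mu_E\approx\mathcal{C}_{\mathcal{A}}(E)^{1/(p-1)}$ for $p>n$ and $\equiv$ a universal constant $\gamma$ for $p=n$ (this is exactly Lemma \ref{lem4.1} of the present paper), and the Brunn--Minkowski inequality with homothety rigidity is for $\mathcal{C}_{\mathcal{A}}$. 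Your versions for $J$ are not only unsupported but false: under dilation one computes from \eqref{2.12} and the scaling of $U$ that $\mu_{tE}=t^{-(n-1)/(p-1)}\mu_E$ (and $t^{-1}\mu_E$ for $p=n$), while $\int h_{tE}\,d\mu_{tE}$ scales like $t^{(p-n)/(p-1)}$ (is constant for $p=n$), so $\int h_E\,d\mu_E$ cannot be proportional to $J(E)$; likewise, testing your formula $\frac{d}{ds}J(E_s)=c_{n,p}\int\psi\,d\mu_{E_0}$ with $\psi=h_{E_0}$ (pure dilation) gives a negative left-hand side against a positive right-hand side. Since your Lagrange-multiplier step and, more seriously, your uniqueness argument (vanishing derivative plus concavity of a power of $J$) rest on these two statements, they do not go through as written. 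If you replace $J$ by $\mathcal{C}_{\mathcal{A}}$ throughout \textbf{(III)}--\textbf{(IV)} --- minimizing $\int h_E\,d\mu$ under a $\mathcal{C}_{\mathcal{A}}$-normalization, and using the Brunn--Minkowski equality case for $\mathcal{C}_{\mathcal{A}}$ for (d), with the final dilation fixed by the nonzero homogeneity of $\mu_E$ --- your outline becomes precisely the argument of \cite{ALSV}.
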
  
Also in   \cite{AGHLV}  the authors  proved 
\begin{theorem}  
\label{thm2}  
Let   $  \mu $ be   as in    \eqref{1.8} and $f$ be  as in  \eqref{1.5}.
Then  for  fixed $p$ with $1<p   \not =  n - 1<n$, there exists a  compact  convex set $ E $ with non-empty interior   
and an $  \mathcal{A}$-capacitary function,   $\ti U$ for $E$,   satisfying   $ (a)-(d)$ of   Theorem  \ref{thm1} with  $  U  = \ti U$.   If    $  p  =  n - 1,  $ then  there exists a  compact              convex set $ E $ with non-empty interior having  $ \mathcal{A}$-capacity $1$, 
and a corresponding $  \mathcal{A}$-capacitary function $ \ti U $ for $E$  satisfying   $(a)$ and $(b)$  of  Theorem \ref{thm1} with  $ U  = \ti U, $ as well as,    
\begin{align*}
&(c') \hs{.2in}   \mbox{ There exists $  \ti b, 0 < \ti b < \infty, $  with } \\   
&\hs{.49in}  {\ds   \ti b  \int_{\mathbf{g}^{-1} ( K, E )  }  f ( \nabla \ti  U  )  \, d \mathcal{H}^{n-1}} =  \, \mu  (K)  \mbox{ whenever  $ K \subset \mathbb{S}^{n-1}$ is a Borel set}. \\
& (d') \hs{.2in}     \mbox{$ E $  is  the unique  set up to translation  satisfying  $(c')$  with  $ \mathcal{A}$-capacity   1.  }  
\end{align*}
\end{theorem}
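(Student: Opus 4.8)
The plan is to prove Theorem \ref{thm2} by the variational method used by Jerison and by Colesanti--Salani for the Minkowski problem for capacity, adapted to $\mathcal{A}$-capacity, supplemented by a smooth-approximation step. Write $\mathrm{Cap}_{\mathcal{A}}$ for the $\mathcal{A}$-capacity of section \ref{section2}: it is monotone under inclusion and homogeneous of degree $n-p>0$, and for $1<p<n$ a compact convex body $E$ with $0$ in its interior has a unique $\mathcal{A}$-capacitary function $\ti U$ (equal to $1$ on $E$, $\mathcal{A}$-harmonic and tending to $0$ in $\rn{n}\sem E$, minimizing $\int f(\nabla v)$). Once $(a)$ and $(b)$ are in force, put $\mu_E(K)=\int_{\mathbf{g}^{-1}(K,E)}f(\nabla\ti U)\,d\mathcal{H}^{n-1}$ for Borel $K\subset\mathbb{S}^{n-1}$, with $h_E$ the support function. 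Since $\mathcal{A}=\nabla f$ is homogeneous of degree $p-1$, one has $\ti U_{\lambda E}(x)=\ti U_E(x/\lambda)$, hence $\nabla\ti U_{\lambda E}(\lambda x)=\lambda^{-1}\nabla\ti U_E(x)$, and therefore the crucial scaling
\[
\mu_{\lambda E}=\lambda^{\,n-1-p}\,\mu_E\qquad(\lambda>0),
\]
which is trivial precisely when $p=n-1$; this is the origin of the dichotomy in the statement. Applying the Hadamard formula below with $L=E$ also gives the Euler identity $\int_{\mathbb{S}^{n-1}}h_E\,d\mu_E=\gamma(n,p)\,\mathrm{Cap}_{\mathcal{A}}(E)$ with $\gamma(n,p)>0$.

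\emph{Existence, non-degeneracy, and regularity $(a)$--$(b)$.} The natural object is to minimize $\mathrm{Cap}_{\mathcal{A}}(E)$ over compact convex bodies $E\subset\rn{n}$ subject to $\int_{\mathbb{S}^{n-1}}h_E\,d\mu=1$. By \eqref{1.8}(ii) competitors may be centred, and since $\he\mapsto\int_{\mathbb{S}^{n-1}}|\lan\he,\ze\ran|\,d\mu(\ze)$ is continuous and strictly positive on the compact sphere by \eqref{1.8}(i), comparison of $h_E$ with the support function of a diameter segment of $E$ bounds every competitor's diameter by some $D_0(\mu)$; Blaschke selection then produces a minimizer. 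The Euler--Lagrange condition (from the Hadamard formula, next paragraph) shows the minimizer's measure is a positive multiple of $\mu$, and \eqref{1.8}(i) rules out degeneration: a convex set of dimension at most $n-2$ has vanishing $d\mathcal{H}^{n-1}$-capacitary measure, and one of dimension $n-1$ has capacitary measure concentrated at two antipodal points, and neither can be a positive multiple of a $\mu$ satisfying \eqref{1.8}(i). This is clean when $n-1<p<n$, where the minimization is coercive; in the range $1<p<n-1$, where even segments have zero $\mathcal{A}$-capacity and the direct infimum is $0$, one instead first solves $(c)$ for $\mu$ with a smooth positive density --- $(c)$ being a Monge--Amp\`ere-type equation for the support function, solvable for a smooth strictly convex body by a degree/continuity argument whose openness step is the invertibility of the linearization supplied by the Hadamard formula --- and then passes to a Hausdorff limit for general $\mu$, the a priori bounds and the non-degeneracy of the limit both furnished by \eqref{1.8}(i). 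Once $E$ is a genuine convex body, $(a)$ and $(b)$ follow from the boundary behaviour of $\mathcal{A}$-harmonic functions in convex, hence Lipschitz, domains: at $\mathcal{H}^{n-1}$-a.e.\ point of $\ar E$ the convex hypersurface is differentiable and $\ti U$ is asymptotically affine there, so $\nabla\ti U$ has a non-tangential limit, while $\int_{\ar E}f(\nabla\ti U)\,d\mathcal{H}^{n-1}\le C\int_{\ar E}|\nabla\ti U|^{p}\,d\mathcal{H}^{n-1}<\infty$ by \eqref{1.5} together with a Rellich-type energy bound on convex domains, obtained by exhausting $E$ from outside by smooth strictly convex bodies with uniform constants.

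\emph{The equation $(c)$/$(c')$.} I would establish the Hadamard variational formula
\[
\frac{d}{dt}\Big|_{t=0^+}\mathrm{Cap}_{\mathcal{A}}\bigl(E_0+tL\bigr)=c_0(n,p)\int_{\mathbb{S}^{n-1}}h_L\,d\mu_{E_0}\qquad(c_0(n,p)>0)
\]
for compact convex bodies $E_0,L$ (so $\gamma=(n-p)/c_0$ above). For the minimizer $E^{\ast}$ of the previous paragraph the constraint functional has derivative $L\mapsto\int h_L\,d\mu$, so the first-order condition gives $c_0\,\mu_{E^{\ast}}=\Lambda\,\mu$ for a real $\Lambda$, and, both measures being nonnegative and nonzero, $\kappa:=\Lambda/c_0>0$, i.e.\ $\mu_{E^{\ast}}=\kappa\,\mu$. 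If $p\ne n-1$, set $E:=\kappa^{-1/(n-1-p)}E^{\ast}$; the scaling of $\mu_E$ gives $\mu_E=\mu$, which is $(c)$. If $p=n-1$, the scaling is trivial, so instead dilate so that $\mathrm{Cap}_{\mathcal{A}}(E)=1$ (possible since $n-p=1$) and put $\ti b:=1/\kappa$, which is $(c')$. In the range $1<p<n-1$ one reaches $(c)$ the same way after the smooth-approximation step, using the weak-$\ast$ continuity of $E\mapsto\mu_E$ on non-degenerate convex bodies and \eqref{1.8}(i) to keep the limit non-degenerate.

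\emph{Uniqueness $(d)$/$(d')$.} This follows from a Brunn--Minkowski-type inequality for $\mathcal{A}$-capacity: $t\mapsto\mathrm{Cap}_{\mathcal{A}}\bigl((1-t)E_0+tE_1\bigr)^{1/(n-p)}$ is concave on $[0,1]$, with equality only if $E_0,E_1$ are homothetic --- established in \cite{AGHLV} by combining the two capacitary functions (note that $f^{1/p}$ is a convex, in general asymmetric, gauge, so the classical symmetrization proof for $p$-capacity requires modification). Differentiating this at an endpoint and using the Euler identity yields the mixed-capacity inequality $C_1(E_0,E_1)^{\,n-p}\ge\mathrm{Cap}_{\mathcal{A}}(E_0)^{\,n-p-1}\,\mathrm{Cap}_{\mathcal{A}}(E_1)$, where $C_1(E_0,E_1):=\tfrac{c_0}{n-p}\int_{\mathbb{S}^{n-1}}h_{E_1}\,d\mu_{E_0}$, with equality iff $E_0,E_1$ are homothetic. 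If $E_0,E_1$ both satisfy $(c)$ then $\mu_{E_0}=\mu_{E_1}=\mu$, whence $C_1(E_0,E_1)=\mathrm{Cap}_{\mathcal{A}}(E_1)$ and $C_1(E_1,E_0)=\mathrm{Cap}_{\mathcal{A}}(E_0)$; feeding these into the inequality and its transpose forces $\mathrm{Cap}_{\mathcal{A}}(E_0)=\mathrm{Cap}_{\mathcal{A}}(E_1)$ and equality, so $E_1=sE_0+x_0$, and then $\mu=\mu_{E_1}=s^{\,n-1-p}\mu$ forces $s=1$ since $n-1-p\ne0$. If instead $E_0,E_1$ satisfy $(c')$ with $\mathrm{Cap}_{\mathcal{A}}=1$, the same computation with $\ti b_0\mu_{E_0}=\mu=\ti b_1\mu_{E_1}$ gives $\ti b_0=\ti b_1$ and equality, and $\mathrm{Cap}_{\mathcal{A}}=1$ then forces $s=1$; in all cases $E_1$ is a translate of $E_0$. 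The main obstacles I foresee are (i) the boundary-regularity package behind $(a)$, $(b)$, the Hadamard formula, and the continuity $E\mapsto\mu_E$ --- all resting on Dahlberg--Lewis--Nystr\"om-type estimates for $\mathcal{A}$-harmonic functions in Lipschitz/convex domains, and in the range $1<p<n-1$ on a priori estimates for the approximating bodies --- and (ii) the Brunn--Minkowski inequality for $\mathcal{A}$-capacity together with the sharp analysis of its equality case, which is delicate because $f$ need not be rotationally symmetric.
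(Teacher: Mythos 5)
You should first be aware that this paper contains no proof of Theorem \ref{thm2}: it is quoted from \cite{AGHLV}, so the comparison is with the argument given there. Your overall architecture is in fact the one used in \cite{AGHLV} (following \cite{J} and \cite{CNSXYZ}): a Hadamard variational formula for $\mathrm{Cap}_{\mathcal{A}}$ resting on the boundary regularity package $(a)$--$(b)$, a variational existence argument whose compactness comes from \eqref{1.8}, the scaling $\mu_{\lambda E}=\lambda^{\,n-1-p}\mu_E$ explaining why $p=n-1$ only yields $(c')$ with a constant $\ti b$ and the normalization $\mathrm{Cap}_{\mathcal{A}}(E)=1$, and uniqueness via the Brunn--Minkowski inequality for $\mathcal{A}$-capacity together with the characterization of its equality case (which is precisely where hypothesis \eqref{1.5} enters, as Section \ref{section6} of this paper points out). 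Your uniqueness computation from the mixed-capacity inequality is coherent in both the $p\neq n-1$ and $p=n-1$ cases, modulo the Brunn--Minkowski equality case, which you correctly identify as the delicate ingredient.

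The genuine gap is your existence step when $1<p<n-1$. Minimizing $\mathrm{Cap}_{\mathcal{A}}(E)$ subject to $\int_{\mathbb{S}^{n-1}}h_E\,d\mu=1$ has infimum $0$ in that range (as you concede), and your repair --- solve $(c)$ for smooth positive densities by a continuity/degree argument whose openness step is ``invertibility of the linearization,'' then pass to Hausdorff limits --- presupposes an existence and regularity theory for this fully nonlinear, non-rotationally-invariant capacitary Monge--Amp\`ere problem (a priori estimates, invertibility of the linearized operator, the function spaces in which to run the method) that is nowhere available and is at least as hard as Theorem \ref{thm2} itself; it cannot be waved in as a patch. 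The proofs of \cite{J}, \cite{CNSXYZ}, and \cite{AGHLV} avoid this by transposing the variational problem: minimize $\int_{\mathbb{S}^{n-1}}h_Q\,d\mu$ over compact convex $Q$ with $\mathrm{Cap}_{\mathcal{A}}(Q)\geq 1$ (or first treat discrete measures and pass to weak limits of the associated capacitary measures), for which \eqref{1.8}(i) gives a diameter bound and coercivity in the entire range $1<p<n$. Relatedly, your nondegeneracy argument is too quick even in the coercive range: to invoke the Euler--Lagrange condition you already need the minimizer to be a body on which $\mu_E$ and the Hadamard formula make sense, while for $p$ close to $1$ lower-dimensional convex sets have positive $\mathcal{A}$-capacity and nontrivial associated measure, so ruling them out requires the variational/first-order analysis carried out in \cite{J} and its adaptations, not just the observation that their Gauss-map measures are degenerate. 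With the transposed functional (or the discrete-measure scheme) in place of your existence step, the rest of your outline does match the actual proof.
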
 
As an application of  Theorem \ref{thmA}   when $ n = 2, 3, $ we prove the regularity of the Minkowski problem.     
  \begin{mytheorem}  
  \label{thmB}    
  Let   $  \mu $ be   as in    \eqref{1.8} and  $f$  as in  \eqref{1.5}. Suppose  also  that  $ \hat \al \in (0, 1),  k $ is a non-negative  integer,  and    $  d\mu  =   \He   \,  d \mathcal{H}^{n-1} $   on  
$ \mathbb{S}^{n-1} $   for some $ 0 < \He  \in  C^{k, \hat \al } ( \mathbb{S}^{n-1} ). $  If  $ k \geq 1, $   assume  $ f  \in  C^{k + 2,  \hat \al} (  \rn{n} \sem \{0\})  . $    Let $ E $ be the compact convex set with non-empty interior in  Theorem \ref{thm1} or Theorem \ref{thm2} corresponding to  $\mu.$    
  If  either  $ n = 2, 3, $  and  $ 1 <  p  <  \infty, $ or  $ n  \geq  4 $  and $ 1 < p \leq 2, $  then  $ \ar E  $  is  locally the graph  of  a   $  C^{k + 2, \hat  \al} ( \rn{n-1} ) $  function. 
  \end{mytheorem}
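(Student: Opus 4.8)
The plan is to reduce Theorem \ref{thmB} to the assertion that $\ar E$ is strictly convex and of class $C^1$, to establish that by a blow-up argument powered by Theorem \ref{thmA}, and then to bootstrap using regularity theory for Monge--Amp\`ere type equations. Let $U$ be the $\mathcal{A}$-harmonic Green's function of Theorem \ref{thm1} (or the $\mathcal{A}$-capacitary function of Theorem \ref{thm2}) attached to $E$, and let $h=h_E$ be the support function of $E$ on $\mathbb{S}^{n-1}$, so that $\mathbf{g}^{-1}(\cdot,E)=\nabla h$ (spherical gradient) $\mathcal{H}^{n-1}$-a.e. Changing variables from $\ar E$ to $\mathbb{S}^{n-1}$ turns property $(c)$ (resp.\ $(c')$) into the Monge--Amp\`ere type equation
\[
 \det\!\big(\nabla^2 h(\nu)+h(\nu)I\big)\, f\big(\nabla U(\nabla h(\nu))\big)=\He(\nu)\qquad \mathcal{H}^{n-1}\text{-a.e.\ on }\mathbb{S}^{n-1},
\]
in which, a priori, $f(\nabla U\circ\nabla h)$ is only controlled as a finite $L^1$ density via Theorem \ref{thm1}$(a),(b)$. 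Since the conclusion is local on $\ar E$ it suffices to control $h$ near each point of $\mathbb{S}^{n-1}$.

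\emph{Strict convexity and $C^1$ regularity of $\ar E$.} This is the heart of the matter and the place where Theorem \ref{thmA} is used. Fix $x_0\in\ar E$, move $x_0$ to the origin, and let $T=T_{x_0}E$ be the (Bouligand) tangent cone of $E$ at $x_0$ and $N$ the normal cone; convexity gives $\rn{n}\sem T\supseteq$ a half space, with equality exactly when $\ar E$ has a unique tangent plane at $x_0$. Arguing by contradiction, suppose $T$ is not a half space. If $x_0$ lies in a flat face $F$ of $\ar E$ (all of $F$ having a single outer normal $\nu$), then near the relative interior of $F$ the domain $\rn{n}\sem E$ has flat boundary, so boundary regularity for $\mathcal{A}$-harmonic functions together with the Hopf boundary point lemma give $|\nabla U|>0$ on $F$, whence $\mu(\{\nu\})=\int_F f(\nabla U)\,d\mathcal{H}^{n-1}>0$ — impossible since $\mu=\He\,d\mathcal{H}^{n-1}$ is absolutely continuous. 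If $\ar E$ has no segments through $x_0$ but $N$ has nonempty interior in $\mathbb{S}^{n-1}$ (a genuine corner), then $\mathbf{g}^{-1}(N^{\circ},E)=\{x_0\}$, so $0<\int_{N^{\circ}}\He\,d\mathcal{H}^{n-1}=\mu(N^{\circ})=\int_{\{x_0\}}f(\nabla U)\,d\mathcal{H}^{n-1}=0$, again a contradiction; the same device applied to $\mathbf{g}(\Gamma)$ for a curved edge $\Gamma$ (whose Gauss image has positive $\mathcal{H}^{n-1}$-measure while $\Gamma$ is $\mathcal{H}^{n-1}$-null) excludes curved edges, and straight edges are excluded by applying the corner argument at their endpoints. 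What remains is a ``thin'' $T$ with $N$ of empty interior: here one rescales $U$ at $x_0$ to obtain, along a subsequence, a positive homogeneous $\mathcal{A}$-harmonic function $V$ of some degree $\la$ on $\rn{n}\sem T$ vanishing on $\ar T\sem\{0\}$. When $n=2,3$ such a $T$ is, after a rotation and — for $n=3$ — splitting off the edge direction together with a Liouville argument, the complement of a cone $K(\al)$ with $\al\in(\pi/2,\pi]$; Theorem \ref{thmA} (existence and uniqueness in $K(\al)$, monotonicity of $|\la_1|$ on $(0,\pi)$, $\la_1(\pi/2)=1$, $\la_1(\pi)=1-(n-1)/p$, and the expansion \eqref{1.7} near $\al=\pi$) identifies $V$ with the distinguished solution and pins $\la=\la_1(\al)\in[1-(n-1)/p,1)$. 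The borderline case $\la=1-(n-1)/p$ (the ray $\al=\pi$, which also requires $p>n-1$ for the slit to carry positive $\mathcal{A}$-capacity) makes $f(\nabla U)$ non-integrable near $x_0$, contradicting Theorem \ref{thm1}$(b)$, while $\la\in(1-(n-1)/p,1)$ is compatible with the Monge--Amp\`ere equation and $\He>0$ only at a corner, already excluded. When $n\ge4$ and $1<p\le2$ one has $p\le 2<n-1$, so the only thin tangent cones whose complement can carry a nontrivial $V$ are ruled out as above while the genuinely low-dimensional ones cannot occur (zero $\mathcal{A}$-capacity, cf.\ Remark \ref{rmk1}), leaving only the ``fat'' cases. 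Organizing this case analysis so that every tangent cone is either ``fat'' (handled by the Hopf lemma and absolute continuity of $\mu$) or ``thin and covered by some $K(\al)$ via Theorem \ref{thmA}'' — and it is precisely this that forces the hypothesis $n=2,3$, or $n\ge4$ with $1<p\le2$ — is the step I expect to be the main obstacle.

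\emph{Bootstrap to $C^{k+2,\hat\al}$.} Once $\ar E$ is $C^1$ and strictly convex, interior and boundary regularity for $\mathcal{A}$-harmonic functions give $U\in C^{1,\be}$ up to $\ar E$ for some $\be>0$, and the Hopf lemma gives $|\nabla U|>0$ on $\ar E$; hence $\He\,f(\nabla U\circ\nabla h)^{-1}$ is continuous and strictly positive, so the regularity theory for the Monge--Amp\`ere / Minkowski problem (Nirenberg, Pogorelov, Cheng--Yau, Caffarelli) yields $h\in C^{1,\al'}$, hence $\ar E\in C^{1,\al'}$, hence (composition, $f$ being smooth where $\nabla U\ne0$) $f(\nabla U\circ\nabla h)\in C^{0,\al'}$, hence — using $\He\in C^{0,\hat\al}$ and shrinking $\al'$ to $\hat\al$ — $h\in C^{2,\hat\al}$. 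One then iterates: for $j\ge2$ with $j-1\le k$, if $\ar E\in C^{j,\hat\al}$ then, since $\nabla U\ne0$ near $\ar E$ makes the equation uniformly elliptic with coefficients as regular as $f$ (which is $C^{2,1}$ away from the origin by \eqref{1.5}$(c)$ when $k=0$, and $C^{k+2,\hat\al}(\rn{n}\sem\{0\})$ by hypothesis when $k\ge1$), Schauder theory gives $U\in C^{j,\hat\al}$ up to $\ar E$, so $f(\nabla U\circ\nabla h)\in C^{j-1,\hat\al}$, so the Monge--Amp\`ere right-hand side is $C^{\min(k,j-1),\hat\al}$, so $h\in C^{\min(k,j-1)+2,\hat\al}$. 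After $k$ steps this reaches $h\in C^{k+2,\hat\al}(\mathbb{S}^{n-1})$, i.e.\ $\ar E$ is locally the graph of a $C^{k+2,\hat\al}(\rn{n-1})$ function, which is the assertion of Theorem \ref{thmB}.
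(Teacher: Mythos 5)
Your proposal departs from the paper's route in a way that leaves the essential quantitative content unproved. The paper does not classify tangent cones at boundary points; it establishes Jerison's integral inequality \eqref{5.12}/\eqref{5.22}, namely $\int_S \de(y',S)^{1-\ep_0}\,d\tau \leq \hat C\,\tau(S_{1/2})$ for every cross-section $S$ of the graph function, and this single inequality is what simultaneously yields strict convexity (via Lemma \ref{lem5.4}), the quantitative decay \eqref{5.15} of $\ti\ph$ on the dilated sections, and hence the $C^{1,\al'}$ estimate \eqref{5.16}--\eqref{5.18}. Theorem \ref{thmA} enters only in Lemma \ref{lem5.8}, where the complement of a convex cone $\Ga$ built from a cross-section is \emph{compared} with a rotation of $K(\al)$, $\pi-\al\geq \bar C^{-1}$, to obtain the one-sided bound $\ep_1 \geq 1+(1-n)/p+C^{-1}$ on the homogeneity of an auxiliary function $V$; no identification of $V$ with the $K(\al)$ solution is needed or available. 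Your blow-up step, by contrast, requires identifying the homogeneity degree of a limit $V$ on $\rn{n}\sem T$ for an \emph{arbitrary} convex tangent cone $T$, but Theorem \ref{thmA} only treats circular cones $K(\al)$, and for $n=3$ a tangent cone of a convex body can be, e.g., a dihedral wedge whose complement is not any $K(\al)$; the phrase ``splitting off the edge direction together with a Liouville argument'' does not supply the missing classification. Moreover, the restriction to $n=2,3$ or $p\leq 2$ does not arise from the taxonomy of tangent cones: it arises in \eqref{5.39}, where one needs $\de^*(Q)^{1-\ep_0-p+p\ep_1}\leq \de^*(Q)^{-1+(p-1)\ep_0}$ so that the sum over the tiling of $\ti S$ is controlled by \eqref{5.38}.

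The second gap is circularity in the bootstrap. From your blow-up argument you would obtain at best \emph{qualitative} strict convexity and $C^1$ regularity of $\ar E$, with no modulus. But the claim ``boundary regularity for $\mathcal{A}$-harmonic functions gives $U\in C^{1,\be}$ up to $\ar E$'' requires $\ar E$ to be at least $C^{1,\al}$ (or $C^{1,\text{Dini}}$) for Lieberman-type gradient estimates -- a merely $C^1$ boundary does not suffice -- and the Caffarelli theory you invoke to upgrade $h$ to $C^{1,\al'}$ requires a doubling-type control of the Monge--Amp{\`e}re measure whose density involves $f(\nabla U)$, a priori only in $L^1$ by Theorem \ref{thm1}$(b)$. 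Supplying that control is precisely what \eqref{5.12} does, via Lemmas \ref{lem5.6}--\ref{lem5.9}; without it, neither the $C^{1,\al'}$ estimate \eqref{5.18} nor the positivity and H\"older continuity of $f(\nabla U)$ on $\ar E$ in \eqref{5.19} can be reached, and the iteration to $C^{k+2,\hat\al}$ cannot start. Your final Schauder iteration is fine once \eqref{5.18} and \eqref{5.19} are in hand, but the core of the proof is the cross-section inequality, which your outline omits.
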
 
\begin{remark} 
\label{rmk2} 
Theorems \ref{thm1}, \ref{thm2}, and  \ref{thmB}  are generalizations of  existence, uniqueness, and  regularity for the classical  Minkowski Problem.  To give a little history,  the classical Minkowski existence and  uniqueness  theorem  states that if   $   \mu $  is as in   (1.8) , then there 
exists a  unique  compact convex set $  E $ (up to  translation)  with non-empty interior such that   
\begin{align}  
\label{eqn7.2}    
\mathcal{H}^{n - 1} ( \mathbf{g}^{-1} (K,E ) ) =  \mu ( K ) \mbox{ whenever  $ K \subset \mathbb{S}^{n-1} $ is a Borel set.} 
\end{align} 
 When $E$ is a polyhedron, the measure is a sum of point masses at the normals to each of the faces, and the coefficient at a normal is the surface area of that face.

The  analogue of  Theorem  \ref{thmB} concerning regularity  in the Minkowski  problem was studied by    Pogorelov in \cite{Pog}, Nirenberg in \cite{Nir}, Cheng and Yau in \cite{CY}, and     
Caffarelli in  \cite{C1, C2, C3, C}.  See also  recent work of Savin in \cite{Savin} and De Philippis and Figalli in \cite{DeF}.  In  all  papers regularity of $ \ar E $  reduces to a  corresponding regularity problem for  the graph of  a  convex solution to  a  certain  
Monge-Amp\`ere  equation with $0$  boundary values.  A more thorough  discussion of this reduction is given in  section \ref{section5}.

 Theorems  \ref{thm2} and  \ref{thmB} were first proved  by Jerison     in  \cite{J} for  Laplace's equation (i.e., when $ f ( \eta ) = |\eta |^2/2 $)   and  after that  generalized  to  $p$-harmonic functions when  $ 1 <  p  < 2$  in \cite{CNSXYZ} for $ n  >  2 $.  It  will turn out that  it  suffices  to  assume   that   $ \He  $ is bounded above  and below on  $  \mathbb{S}^{n-1}  $  in order   to   conclude  $  \ar  E $  is strictly convex  and  locally the graph of  a  
 $ C^{1, \ep}  $ function where  $ \ep > 0 $  depends  on $ \ti a_1, \ti a_2, p, n, $ the  eccentricity of $ E, $  and the bounds  for  $ \He $. 
 \end{remark}  
\subsection{Outline of the proof of  Theorems  \ref{thmA} and \ref{thmB}} 
Existence  in Theorem \ref{thmA}  for   $  \al \in ( 0, \pi ) $   follows  easily  from interior  regularity results   and  Wiener type estimates    for  $ \mathcal{A}$-harmonic functions 
   listed   in   section   \ref{section2}. 
  Uniqueness in  Theorem  \ref{thmA}  for   $ \al   \in ( 0,  \pi)$  follows  from    boundary Harnack  inequalities,  originally  proved   for   positive $p$-harmonic functions vanishing on a portion of a Lipschitz  domain in    \cite{LN,LN1}.  These  inequalities  were updated  to  $ \mathcal{A}$-harmonic functions for fixed $ p$ with  $1 < p < n$ in  \cite{AGHLV} and for $ p \geq n $  in \cite{ALSV}.   Uniqueness in  the case  $  \al  = \pi $  
is   somewhat  more involved (since  $ K ( \pi ) \cap  B ( 0, \rho)  $ is not a Lipschitz domain),  using not only the  above  boundary Harnack inequalities but also arguments from   
\cite{LLN} and  \cite{LN2}.   To  outline the  
proof  of \eqref{1.7}  we now write $ u (  \cdot, \al)$ and $\la ( \al )$ for $ u_1$ and $\la_1$   in  Theorem \ref{thmA}  relative to  $ K ( \al ). $  First  it  follows easily from our existence and uniqueness results that $ \la $  is continuous  and decreasing  as a function of $ \al $  on  $ (0, \pi )$  with    $ {\ds  \lim_{\al \to \pi}} \la ( \al ) =  \la (\pi). $    From     boundary  Harnack inequalities for $ \mathcal{A}$-harmonic functions,  as well as  an integral  identity proved in   \cite{AGHLV} for  $ n - 1 < p  < n $  and in \cite{ALSV} for $ p \geq n,  $ we  eventually obtain     
 \begin{align}    
  \label{1.10}  
  \bar c (\de)^{-1}    \leq    \int_{ \ar K ( \al) \cap  \{ x : \, x_1  \geq  
-  1  + 4 \de    \} }  \sin (\pi - \al ) \,  
     f ( \nabla u  ( y, \al ))     d \mathcal{ H }^{n - 1}   
   \leq     \bar c  (  \de ).  
\end{align}
  in \eqref{4.11} where 
\[  0 <    \pi -  \al < <  \de <<  1   \mbox{  and   $ \de$  is fixed.}   \]   Also   $ c ( \de ) \geq 1 $  is a positive constant depending only on 
$ p, n, $ and $ \ti a_1, \ti a_2  $ in \eqref{1.5}.    To estimate the integral  in  \eqref{1.10} we use   a boundary Harnack inequality for $ \mathcal{A}$-harmonic functions on  lower dimensional  sets  from \cite{LN2}   to  essentially obtain       
\begin{align}
\label{1.11}      
|  \nabla u ( \cdot,  \al )  |  \leq   c '  \, (\pi -  \al)^{\frac{ 2  - n   }{ p - 1 }} \quad \mbox{on}\, \,  \ar K ( \al ) \cap  [ B ( 0, 2 )  \sem B ( 0, 1/2)]
\end{align} 
 where $ c'$  depends on  $ p, n, $ and $ \ti a_1, \ti a_2  $ in \eqref{1.5}.  
 From  \eqref{1.10}, \eqref{1.11}, and homogeneity of  $ u (  \cdot, \al ) $   we finally get 
\begin{align}
   \label{1.12}   
   \begin{split}  
   c(\de)^{-1}       & \leq \left( { \ds \int_0^1  \,   r^{ (\la (\al) - 1) p + n - 2 }  dr  }\right) \, (\pi -  \al)^{\frac{ p  - n + 1  }{ p - 1 }}     \\
  &   \leq   \,  {\ds \frac{c(\de)}{ (\la (\al)  - 1) p + n - 1} \,(\pi -  \al)^{\frac{ p  - n + 1  }{ p - 1 }} }
\end{split}  
        \end{align} 
        where  $ c ( \de ) $   has the same  dependence  as  $ \bar c  (\de ) $   above and  we  have also  used the fact that an element of  surface area on   $ \ar K(\al) $  is  of the form $[\sin(\pi -  \al)]^{n-2}   r^{n-2}  dr$.   
 From \eqref{1.12}  and some arithmetic we conclude        
      \begin{align}  
      \label{1.13}  
      \lambda (\alpha)  \leq   1  -  \frac{n-1}{p}  +   c^* \,  ( \pi - \alpha)^{\frac{ p  - n + 1  }{ p - 1 }} \quad \mbox{as}\, \,  \alpha \to \pi
      \end{align} 
      for some $ c^* = c^* (p,n, \ti a_1, \ti a_2 )  \geq  1$    and so  get  the  desired  upper  estimate  for  $  \la_1 ( \alpha ) $  in  Theorem \ref{thmA}.  The lower estimate is similar.  We note that a slightly different  proof  of   Theorem  \ref{thmA}  for  $p$-harmonic functions when $ n - 1 < p < n $   (with more details)  is outlined  in   \cite{ALV}.         

As for the proof of  Theorem \ref{thmB},  armed   with  Theorems \ref{thmA}, \ref{thm1}, and \ref{thm2}.,  we  can    follow closely the proof in  \cite{CNSXYZ},  who in turn  followed closely the proof in  \cite{J}.    Indeed,  Jerison    in \cite{J},   first converts Theorem \ref{thmB}  into a  regularity statement for the solution, say  $ \hat u $ to a  Monge Amp{\`e}re equation whose right-hand side    corresponds to   a  measure $ \hat \mu $ on $ \mathbb{S}^{n-1}$.   To show regularity of $ \hat u,  $  he  first generalized  the  Alexandrov-Bakelman  inequality  (see \cite[Lemma 7.3]{J})  and  then  used  this  generalization to  prove  a  certain integral inequality for $ \hat \mu $  in  Theorem 6.5 of  \cite{J}.   This inequality was then used to show that arguments in  \cite{C,C1,C2,C3} could  be used to eventually obtain  Theorem  \ref{thmB} (see also \cite{GH}). Theorem  \ref{thmA} is  used in Theorem \ref{thmB}  to prove the  analogue of  Theorem 6.5 in \cite{J}  when   $ n  =  2,  3$ and $p  > 2$.  In fact,  Theorem  \ref{thmA}  is  used  only in the proof of  Lemma \ref{lem5.8}.    Unfortunately  this lemma  is   not   strong enough to be used in the rest of  Jerison's  proof when  $ p > 2, $  unless   $ n =2, 3. $

    As for the plan of this paper,  in  section \ref{section2},     we  state   some  basic  properties of  $  \mathcal{A}$-harmonic functions, give the definitions mentioned after Theorem  \ref{thm2},  and prove existence in  Theorem \ref{thmA}.   In  section \ref{section3},   we  state several   boundary  Harnack inequalities and then apply  these inequalities  to prove uniqueness  in  Theorem   \ref{thmA}.    In   section  \ref{section4} we state    integral   identities from  \cite{AGHLV,ALSV} and then use these  identities to prove Theorem  \ref{thmA}.  Theorem  \ref{thmB}  is proved in  
section  \ref{section5}.        In  section \ref{section6} we  make  closing remarks concerning generalizations of  Theorems \ref{thmA}  and \ref{thmB}.   
\setcounter{equation}{0} 
 \setcounter{theorem}{0}


\section{Basic estimates and definitions for $\mathcal{A}$-harmonic functions}
\label{section2}  
 In this section we  first introduce some notation and then state  some fundamental estimates for 
   $\mathcal{\ti A} =  \nabla \ti f$-harmonic functions when $ p $ is fixed, $ 1 < p < \infty, $ and  $ \ti f $  satisfies  \eqref{1.5}  with  $ f = \ti f$. Second, we define the  $ \mathcal{A}$-capacitary function when $ 1 < p < n $ and  $ \mathcal{A}$-harmonic  Green's function with pole at $ \infty $ when $   p  \geq n $  of  a  compact convex set $ E. $   Third, we show  existence of $ u_i$ for $i = 1, 2$, in Theorem \ref{thmA} relative to  $ K ( \al ) $  when  $ \al \in (0,  \pi). $ 
Concerning constants, unless otherwise stated, in this section, and throughout the paper,
$ c $ will denote a  positive constant  $ \geq 1$, not
necessarily the same at each occurrence, depending at most on
 $ p, n,  \ti a_1, \ti a_2,  $ 
  which sometimes we refer to as depending on the data.
In general,
$ c ( t_1, \dots, t_m ) $  denotes  a positive constant
$ \geq 1, $  which may depend at most on $p, n, \ti a_1, \ti a_2 $   and   $ t_1, \dots, t_m, $
not necessarily the same at each occurrence. Also, as in  the  introduction,  if $B\approx C$ then $B/C$ is bounded from above and below by
constants which, unless otherwise stated, depend at most on  the data.
 Let  $ e_k $ be the $n$  tuple  with   one in the $k$th position and zeros elsewhere.  Let  $ d ( E_1, E_2 )$  denote the distance between the sets $ E_1$ and $E_2. $ For short we write  $ d ( x,  E_2  ) $ for  $ d  ( \{x\}, E_2). $  
Also put  $ E_1 +  E_2  =  \{ x + y : x \in E_1, y \in E_2\}$ and $\la E =  \{ \la x : x \in E \}$ for   $ \la > 0 $. Let $\mbox{diam}(E)$, $\bar E$, and $\ar E $  denote  the diameter, closure, and boundary  of  $ E$ respectively. We write
$ { \ds \max_{E}   \ti u, \,  \min_{E} \ti  u } $ to denote  the
  essential supremum and infimum  of $  \ti u $ on $ E$
whenever $ E \subset  \mathbb R^{n} $ and  $ \ti u$ is defined on $ E$.
 \begin{lemma}
\label{lem2.1}
Given $ p, 1 < p < \infty,  n \geq 2, $  and  $ \ti f $ as in  \eqref{1.5},  let
 $ \ti u $  be  a
positive $\ti {\mathcal{A}} =  \nabla   \ti f $-harmonic function in $B (w,4r)$ for $r>0$.Then
	\begin{align}
	\label{2.1}
	\begin{split}
		(i)&\, \, r^{ p - n} \,\int_{B ( w, r/2)} \, | \nabla \ti  u |^{ p } \, dy \, \leq \, c \, (\max_{B ( w,
r)} \ti  u)^p, \\
		(ii)&\, \, \max_{B ( w, r ) } \,\ti   u \, \leq c \min_{ B ( w, r )} \ti u.
		\end{split}
\end{align}	
	Furthermore, there exists $\ti \sigma=\ti \sigma(p,n, \ti a_1, \ti a_2)\in(0,1)$ such that if $x, y \in B ( w, r )$, then
	\begin{align*}
		(iii)&\ \ | \ti  u ( x ) - \ti  u ( y ) | \leq c \left( \frac{ | x - y |}{r} \right)^{\ti \sigma} \, \max_{B (
w, 2 r )}  \, \ti  u.
	\end{align*}
   \end{lemma}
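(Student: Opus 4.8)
All three assertions are standard consequences of the structure conditions \eqref{1.5}, and the plan is to reduce them to the De Giorgi--Nash--Moser theory for quasilinear divergence-form equations; see \cite{HKM}. The preliminary step is to record the pointwise consequences of \eqref{1.5} for $\ti{\mathcal{A}}=\nabla\ti f$. Differentiating the homogeneity relation $\ti f(t\eta)=t^{p}\ti f(\eta)$ shows $\ti{\mathcal{A}}$ is homogeneous of degree $p-1$, so $\ti{\mathcal{A}}(0)=0$; integrating the Hessian bound in \eqref{1.5}$(b)$ along line segments then yields, with $c=c(p,n,\ti a_1)$,
\[
c^{-1}|\eta|^{p}\le\lan\ti{\mathcal{A}}(\eta),\eta\ran,\qquad |\ti{\mathcal{A}}(\eta)|\le c\,|\eta|^{p-1},\qquad \lan\ti{\mathcal{A}}(\eta)-\ti{\mathcal{A}}(\xi),\eta-\xi\ran>0\ \ \text{for }\eta\ne\xi.
\]
These are exactly the ellipticity, growth, and monotonicity hypotheses under which the cited theory applies, with all constants depending only on $p,n,\ti a_1$.

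For $(i)$ I would test \eqref{1.6} with $\he=\ph^{p}\ti u$, where $\ph\in C_0^\infty(B(w,r))$ satisfies $\ph\equiv1$ on $B(w,r/2)$, $0\le\ph\le1$, $|\nabla\ph|\le c/r$. Expanding $\nabla\he=\ph^{p}\nabla\ti u+p\,\ph^{p-1}\ti u\,\nabla\ph$ and using $\lan\ti{\mathcal{A}}(\nabla\ti u),\nabla\ti u\ran\ge c^{-1}|\nabla\ti u|^{p}$ together with $|\ti{\mathcal{A}}(\nabla\ti u)|\le c|\nabla\ti u|^{p-1}$ gives
\[
\int\ph^{p}|\nabla\ti u|^{p}\,dy\le c\int\ph^{p-1}|\nabla\ph|\,|\nabla\ti u|^{p-1}\ti u\,dy ,
\]
and Young's inequality applied to the right side absorbs a small multiple of $\int\ph^{p}|\nabla\ti u|^{p}$ into the left, leaving $\int\ph^{p}|\nabla\ti u|^{p}\le c\,r^{-p}\int_{B(w,r)}\ti u^{p}\le c\,r^{\,n-p}(\max_{B(w,r)}\ti u)^{p}$. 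Multiplying through by $r^{p-n}$ gives $(i)$; note that this argument is self-contained and does not use $(ii)$.

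For $(ii)$ I would run the Moser iteration scheme on \eqref{1.6}: testing with $\ph^{p}(\ti u+\ep)^{\be}$ (the shift by $\ep>0$ removes the degeneracy at $0$) for $\be>0$ and iterating over shrinking concentric balls bounds $\max_{B(w,r)}\ti u$ by an $L^{s}$ average of $\ti u$ over $B(w,2r)$ for some $s=s(p,n)>0$; doing the same for $\be<-1$ bounds the corresponding negative $L^{s}$ average of $\ti u$ from below by $c^{-1}\min_{B(w,r)}\ti u$; and the gap between the positive and negative exponents is closed by the John--Nirenberg inequality applied to $\log\ti u$, whose bounded mean oscillation follows from testing with $\ph^{p}\ti u^{1-p}$. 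Chaining these estimates along a Harnack chain of balls covering $B(w,r)$ yields $(ii)$. (Alternatively one may simply quote \cite[Chapter~3]{HKM}, as $\ti{\mathcal{A}}$ lies in the admissible class treated there.)

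Finally, $(iii)$ follows from $(ii)$ by the standard oscillation--decay argument: for $0<\rho\le 2r$ set $M(\rho)=\max_{B(w,\rho)}\ti u$, $m(\rho)=\min_{B(w,\rho)}\ti u$. Since $\ti{\mathcal{A}}$ depends only on the gradient, $\ti u-m(2\rho)\ge0$ is again a nonnegative solution of the same equation, while $v:=M(2\rho)-\ti u\ge0$ is a nonnegative solution of $\nabla\cdot\ti{\mathcal{B}}(\nabla v)=0$, where $\ti{\mathcal{B}}(\xi):=-\ti{\mathcal{A}}(-\xi)$ has the same structure constants. Applying the weak Harnack inequality (a consequence of $(ii)$) to each and adding gives $M(\rho)-m(\rho)\le\tau\,(M(2\rho)-m(2\rho))$ for a fixed $\tau=\tau(p,n,\ti a_1)\in(0,1)$, and iterating over the dyadic scales between $|x-y|$ and $r$ produces $(iii)$ with $\ti\si=\log_2(1/\tau)$. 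I expect no substantive obstacle here; the only point requiring care is verifying that \eqref{1.5} genuinely places $\ti{\mathcal{A}}$ in the De Giorgi--Nash--Moser class with constants depending only on $p,n,\ti a_1$ (the third-derivative bound $\ti a_2$ in \eqref{1.5}$(c)$ plays no role in Lemma \ref{lem2.1}; it is used only in the later gradient estimates).
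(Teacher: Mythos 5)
Your proposal is correct: the paper itself disposes of Lemma \ref{lem2.1} with a one-line citation to the standard quasilinear theory (Serrin / \cite{HKM}), and your sketch — verifying that \eqref{1.5} yields the ellipticity, growth, and monotonicity conditions for $\ti{\mathcal{A}}$, then running Caccioppoli for $(i)$, Moser iteration for $(ii)$, and oscillation decay for $(iii)$ — is exactly the argument behind that citation. Your observation that $\ti a_2$ is not needed here (only for the gradient estimates of Lemma \ref{lem2.2}) is accurate and harmless, since the stated dependence of $\ti\sigma$ on $\ti a_2$ is simply a weaker claim.
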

\begin{proof}
A proof of this lemma can be found in  \cite{S}.
\end{proof}
\begin{lemma}  
\label{lem2.2}	
Let $p,n,  \ti f , \ti {\mathcal{A}},  \ti u, w,r,$ be as in Lemma \ref{lem2.1}.
Then	 $ \ti u$ has a  representative locally in  $ W^{1, p} (B(w, 4r)),$    with H\"{o}lder
continuous partial derivatives in $B(w,4r) $  (also denoted $\ti u$), and there exist $ \ti \be \in (0,1]$ and $c \geq 1 $,
depending only on $ p, n, \ti a_1, \ti a_2, $ such that if $ x, y \in B (  w,  r ), $    then  
	\begin{align}
	\label{2.2}
	\begin{split}
&	(\hat a) \hs{.1in} \, \,  c^{ - 1} \, | \nabla \ti u ( x ) - \nabla \ti u ( y ) | \, \leq \,
 ( | x - y |/ r)^{\ti \be }\, \max_{B (  w , r )} \, | \nabla \ti u | \leq \, c \,  r^{ - 1} \, ( | x - y |/ r )^{\ti \be }\, \ti u (w). \\
 & (\hat b)  \, \,  \, \int_{B(w, r) }  \, \sum_{i,j = 1}^n  \,  |\nabla \ti u |^{p-2} \,  |\ti  u_{x_i x_j}|^2  dy 
\leq   c r^{(n-p-2)} \ti{u}(w).
  \end{split}
\end{align}
   \end{lemma}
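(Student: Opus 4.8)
The plan is to derive Lemma~\ref{lem2.2} from the classical interior $C^{1,\ti\be}$ regularity theory for quasilinear elliptic equations of $p$-Laplace type, combined with Lemma~\ref{lem2.1} and a Caccioppoli-type estimate for the second derivatives. Hypotheses~\eqref{1.5} are exactly what is needed: part~(b) says that the symmetric matrix $a^{ij}(\eta)=\frac{\ar^2\ti f}{\ar\eta_i\ar\eta_j}(\eta)$ is uniformly elliptic with weight $|\eta|^{p-2}$, while (b) and (c) together force $D^2\ti f$ to be Lipschitz and homogeneous of degree $p-2$ on $\rn{n}\sem\{0\}$, as already noted after~\eqref{1.5}. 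By the De~Giorgi--Nash--Moser theory and the interior gradient H\"older estimates for such operators (see \cite{S} and the references there), $\ti u$ has a representative in $W^{1,p}_{\mathrm{loc}}(B(w,4r))$ whose gradient is locally H\"older continuous; this gives the qualitative assertions of the lemma, and it remains to make the estimates quantitative and to add~$(\hat b)$.

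For~$(\hat a)$, the two displayed inequalities together assert $|\nabla\ti u(x)-\nabla\ti u(y)|\le c\,(|x-y|/r)^{\ti\be}\,r^{-1}\ti u(w)$ for $x,y\in B(w,r)$. The scale-invariant interior $C^{1,\ti\be}$ estimate, applied on balls contained in $B(w,4r)$, bounds $|\nabla\ti u(x)-\nabla\ti u(y)|$ by $c\,(|x-y|/r)^{\ti\be}\max_{B(w,2r)}|\nabla\ti u|$; then the a priori sup bound for the gradient of an $\ti{\mathcal A}$-harmonic function, applied on a suitable family of balls inside $B(w,4r)$, together with Harnack's inequality (Lemma~\ref{lem2.1}(ii)) and a chaining argument, gives $\max_{B(w,2r)}|\nabla\ti u|\le c\,r^{-1}\max_{B(w,3r)}\ti u\le c\,r^{-1}\ti u(w)$. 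Composing these two steps yields both inequalities of~$(\hat a)$ (the intermediate maximum being over a concentric ball of comparable radius, which is harmless).

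For~$(\hat b)$, fix $\ph\in C_0^\infty(B(w,2r))$ with $\ph\equiv1$ on $B(w,r)$ and $|\nabla\ph|\le c/r$. Differentiating the equation $\nabla\cdot\ti{\mathcal A}(\nabla\ti u)=0$ in the direction $e_k$ shows that $\ti u_{x_k}$ is a weak solution of the linear equation $\ar_j\big(a^{ij}(\nabla\ti u)\,\ar_i\ti u_{x_k}\big)=0$. Testing with $\ph^2\,\ti u_{x_k}$, integrating by parts, using the ellipticity in~\eqref{1.5}(b) on the leading term and Cauchy--Schwarz plus Young's inequality on the remaining term to absorb, and summing over $k$, one obtains
\[
\int_{B(w,r)}|\nabla\ti u|^{p-2}\sum_{i,j=1}^{n}|\ti u_{x_ix_j}|^{2}\,dy\ \le\ \frac{c}{r^{2}}\int_{B(w,2r)}|\nabla\ti u|^{p}\,dy .
\]
A covering argument together with Lemma~\ref{lem2.1}(i),(ii) gives $r^{p-n}\int_{B(w,2r)}|\nabla\ti u|^{p}\,dy\le c\,\ti u(w)^{p}$, and combining the last two bounds produces $(\hat b)$ (with the natural homogeneity exponent $p$ on $\ti u(w)$).

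The main obstacle is that the computation for~$(\hat b)$ is only formal: for $1<p<2$ the weight $|\nabla\ti u|^{p-2}$ is singular on the set $\{\nabla\ti u=0\}$ and for $p>2$ the operator degenerates there, so $\ti u$ need not be twice differentiable and neither the differentiated equation nor the test function $\ph^2\,\ti u_{x_k}$ is literally admissible. The standard and technically delicate remedy is to regularize $\ti f$ by functions $\ti f_\e$, obtained by replacing the degenerate factor by $(\e^{2}+|\eta|^{2})^{(p-2)/2}$ where needed, chosen so that~\eqref{1.5} holds with constants independent of $\e$ and $\nabla\ti f_\e$ is nondegenerate; the corresponding solutions $\ti u_\e$ are then smooth by classical linear theory, so all the estimates above are valid for $\ti u_\e$ with constants not depending on $\e$, and one lets $\e\to0$, using $\ti u_\e\to\ti u$ in $C^{1}_{\mathrm{loc}}$ together with weak lower semicontinuity to pass~$(\hat a)$ and~$(\hat b)$ to the limit. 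This approximation scheme, and indeed all of Lemma~\ref{lem2.2}, is by now routine and is carried out in the references; see \cite{S}.
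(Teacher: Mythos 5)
Your proposal is correct and follows essentially the same route as the paper, which simply cites Tolksdorf \cite{T} for exactly this interior $C^{1,\ti\be}$ theory and the weighted second-derivative (Caccioppoli) estimate obtained by differentiating the regularized equation. Your observation that the right-hand side of $(\hat b)$ should carry $\ti u(w)^p$ rather than $\ti u(w)$ for homogeneity reasons is also correct; the exponent is missing in the paper's statement.
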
    
   \begin{proof}
   A proof of Lemma \ref{lem2.2} can be found  in \cite{T}.  
\end{proof}
\begin{definition}  \label{def2.3} 
 Fix $ p,  1 < p <  \infty $   and  let  $ \ti f $  be as  in  \eqref{1.5} with  $ f  = \ti f. $  If 
  $ \ti K  $   is a  compact subset of  the connected open set  $ D, $    define   the    $ \ti{\mathcal{A}}  =  \nabla \ti f$-capacity of  $ \ti K $  relative to  $  D  $  by  
\[   
\mbox{Cap}_{ \mathcal{\ti A}}( \ti K, D) =\inf\left\{\, \, \int_{D} f (\nabla w(x) ) dx:\, \, w \in C^{\infty}_{0}(D) \, \, \mbox{and}\, \, w (x)\geq  1\, \, \mbox{for}\, \,  x\in \ti K  \right\}.
\] 
\end{definition}   
In  case    $ \ti f ( \eta )  =  p^{-1}  | \eta |^p$ for $\eta  \in \rn{n}, $  we  write   
$ \mbox{Cap}_{p}(\ti K, D) $   instead of 
$ \mbox{Cap}_{\mathcal{\ti A}}( \ti K, D). $ If   $D=\mathbb{R}^{n}$    we  also write   $ \mbox{Cap}_{\mathcal{\ti A}}(\ti K) $
and   $ \mbox{Cap}_{p}(\ti K)  $  for short.   We note  from  \eqref{1.5}   that    
\begin{align}
    \label{2.4}   
     \mbox{Cap}_{p}( \ti K, D)   \approx  \mbox{Cap}_{\mathcal{\ti A}}( 
\ti K, D) \quad \mbox{and} \quad  \mbox{Cap}_{\mathcal{\ti A}}(\tau \ti K + \{x_0\} )   = \tau^{n-p}  \mbox{Cap}_{\mathcal{\ti A }}( \ti K)
\end{align}
for  $\tau > 0$ and $x_0 \in \rn{n}. $ Ratio constants depend only on the data.   If $ n \leq p < \infty$   then  
   $    \mbox{Cap}_{\mathcal{\ti A}}( \ti K)  \equiv  0$ (see \cite[Chapter 2]{HKM}).

\begin{definition}   
\label{def2.4} 
Let $ p, \ti f,  \ti A,   $   be as in  Definition \ref{def2.3}.   A  closed set $ \ti K\subset\mathbb{R}^{n}$ is called uniformly $(r_0, p)$-fat if there exists $\hat c\geq 1$ such that
\[
 \frac{\mbox{Cap}_{p}(\ti K\cap\bar{B}(w,r), B(w,2r))}{\mbox{Cap}_{p}(\bar{B}(w,r), B(w,2r))}\geq  \hat c^{- 1} 
\]
for all $0 < r \leq r_0$  and $w\in \ti K.$  The largest such $\hat c^{- 1} $ is called the uniform $(r_0, p)$-fatness constant of $\ti K$. 
\end{definition} 
\begin{lemma} 
\label{lem2.5}
Let   $ p, \ti f,   \mathcal{\ti  A} ,  $  be as in Definition  \ref{def2.4}     and  suppose  
 that     $ \ti  K $   is a    uniformly  $  (r_0, p )$-fat  compact set with $ \ti K \cap  B (z, 3\rho) \not  =  \es, $   where $ r_0 = \mbox{diam}(\ti K)$.  
Let   $\zeta \in C_0^\infty ( B (z, 4 \rho )   ) $ with  $ \ze  \equiv 1 $  on  
$  B (z, 3 \rho ). $  If $  0  \leq  \ti u $ is   $ \ti {\mathcal{A}}$-harmonic in
  $ B (z, 4 \rho ) \sem  \ti K,$    and  $  \ti u \ze \in  W_0^{1,p} (B (z,4 \rho) \sem  \ti K ),$    
  then  $  \ti u $  has a continuous  extension to  $ B (z, 3 \rho) $  obtained 
  by putting  $ \ti  u  \equiv 0 $ on  $    \ti K \cap  B (z, 3 \rho) $.   Moreover, if   $  0  < r  <   \min\{r_0, \rho\} $ 
  and   $w \in    \ti K \cap  B (z,  2 \rho) , $ then    
	 \begin{align}
	 \label{2.5} 		
	 (i) \quad  r^{ p - n} \, \int\limits_{B ( w, r/2)} \, | \nabla \ti  u |^{ p } \, dy \, \leq \, c_1 \, \left( \max_{B ( w,  r)}  \ti u \right)^p.   
	\end{align}  
	where $ c_1  $  depends   only  on   $ p,n,  \ti a_1, \ti a_2, $ and the uniform 
$ (r_0, p)$-fatness  constant for $  \ti K $. 	Furthermore,  there exist  
$\hat \sigma \in (0,1)$ and $c_2 \geq 1$,   having the  same dependence as $ c_1 $,   such that 
	\[ 
		(ii)\quad  | \ti  u ( x ) - \ti  u ( y ) | \leq c_2 \left(  \frac{ | x - y |}{r} \right)^{ \hat \sigma} \,  \max_{ B (w,  r )}\, \ti u
		\] 
		whenever $ x, y \in B ( w, r/2)$ and $ 0 < r < \min \{ r_0, \rho \} $. 
		\end{lemma}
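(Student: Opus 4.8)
The plan is to exploit the hypothesis that $\ti K$ is uniformly $(r_0,p)$-fat in order to promote interior estimates for $\ti{\mathcal A}$-harmonic functions (Lemma \ref{lem2.1}) to boundary estimates near $\ti K$. The first step is to establish the continuity assertion: that $\ti u$, extended by $0$ on $\ti K\cap B(z,3\rho)$, is continuous up to $\ti K$. This is a Wiener-type criterion. Since $\ti u\zeta\in W_0^{1,p}(B(z,4\rho)\sem\ti K)$, one knows $\ti u$ vanishes on $\ti K$ in the Sobolev sense, and uniform $(r_0,p)$-fatness forces the relevant Wiener integral to diverge at every point $w\in\ti K\cap B(z,2\rho)$, which by the standard theory for $\mathcal{\ti A}$-harmonic functions (see \cite{HKM}) gives continuity of the zero-extension there, with a modulus of continuity depending only on the data and the fatness constant. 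Note here one uses the comparability $\mbox{Cap}_p\approx\mbox{Cap}_{\mathcal{\ti A}}$ from \eqref{2.4} to pass between the $p$-capacity appearing in Definition \ref{def2.4} and the $\mathcal{\ti A}$-capacity governing $\ti u$.

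Next I would prove the Caccioppoli-type bound $(i)$. Fix $w\in\ti K\cap B(z,2\rho)$ and $0<r<\min\{r_0,\rho\}$. Let $M=\max_{B(w,r)}\ti u$. Apply the weak formulation \eqref{1.6} with test function $\he=\psi^p(M-\ti u)$ where $\psi\in C_0^\infty(B(w,r))$ is a cutoff equal to $1$ on $B(w,r/2)$ with $|\nabla\psi|\le c/r$; this is admissible because $(M-\ti u)$ is nonnegative on $B(w,r)$ and, crucially, $\ti u$ vanishes on $\ti K\cap B(w,r)$ so the test function lies in $W_0^{1,p}(B(w,r)\sem\ti K)$ after the continuous extension. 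The structure conditions \eqref{1.5}(a),(b) give the standard absorption estimate
\begin{align*}
\int_{B(w,r/2)}|\nabla\ti u|^p\,dy\le c\,r^{-p}\int_{B(w,r)}(M-\ti u)^p\,dy\le c\,r^{n-p}M^p,
\end{align*}
which is $(i)$. (In fact one does not even need fatness for $(i)$ beyond what the extension gives; it is $(ii)$ that uses fatness essentially.)

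For $(ii)$, the Hölder estimate up to $\ti K$, the plan is the usual oscillation-decay argument. For $w\in\ti K\cap B(z,2\rho)$ and small $\rho'\le r/2$, one compares $\osc_{B(w,\rho')}\ti u$ at successive dyadic scales. Away from $\ti K$, interior Hölder continuity (Lemma \ref{lem2.1}(iii)) applies; near $\ti K$, the decisive input is that uniform $(r_0,p)$-fatness yields, via a Poincaré inequality with capacity weight and the De Giorgi–Moser iteration (equivalently, via a Caccioppoli inequality at each scale combined with the capacitary density lower bound for $\ti K$), a genuine oscillation decay $\osc_{B(w,\rho'/2)}\ti u\le\gamma\,\osc_{B(w,\rho')}\ti u$ with $\gamma=\gamma(\text{data},\hat c)\in(0,1)$, using that $\ti u\equiv0$ on a capacitarily substantial portion of each ball. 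Iterating this gives $(ii)$ with $\hat\sigma=\log_2(1/\gamma)$ and $c_2$ depending only on the data and the fatness constant; combining the near-boundary and interior cases via a standard covering/chaining argument between arbitrary $x,y\in B(w,r/2)$ completes the proof. The main obstacle is the oscillation-decay step for $(ii)$: quantifying how uniform $(r_0,p)$-fatness of $\ti K$ converts the Sobolev vanishing of $\ti u$ into a scale-invariant decay of oscillation, which is precisely the heart of boundary regularity at a fat set; but this is by now classical for $\mathcal{A}$-harmonic functions and can be quoted in the form recorded in \cite{HKM}.
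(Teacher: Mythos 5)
Your overall route is the same as the paper's: the paper disposes of this lemma in three lines, calling $(i)$ a standard Caccioppoli inequality and obtaining $(ii)$ at points of $\ti K$ from uniform $(r_0,p)$-fatness via (essentially) Theorem 6.18 of \cite{HKM}, then combining with the interior H\"older estimate \eqref{2.1}$(iii)$ --- which is exactly your continuity / boundary oscillation-decay / chaining plan.

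There is, however, one concrete error in your argument for $(i)$: the test function $\he=\psi^p(M-\ti u)$ is \emph{not} in $W_0^{1,p}(B(w,r)\sem\ti K)$. Precisely because $\ti u\equiv 0$ on $\ti K$, this function equals $\psi^p M>0$ on $\ti K\cap B(w,r/2)$, so it fails to vanish exactly where it must; your stated justification inverts the logic. The standard fix is to test with $\psi^p\,\ti u$ instead, which does vanish on $\ti K$, and the structure conditions \eqref{1.5} plus Young's inequality then give
\[
\int_{B(w,r/2)}|\nabla\ti u|^p\,dy\;\leq\;c\int_{B(w,r)}\ti u^{\,p}\,|\nabla\psi|^p\,dy\;\leq\;c\,r^{n-p}M^p ,
\]
which is $(i)$. (Your choice of test function can be salvaged, but only by a different justification: the zero extension of $\ti u$ is a \emph{subsolution} in $B(w,r)$, so pairing $\ti{\mathcal A}(\nabla\ti u)$ against the nonnegative function $\psi^p(M-\ti u)\in W_0^{1,p}(B(w,r))$ yields an inequality of the correct sign, and the Caccioppoli absorption goes through.) The rest of the proposal --- the Wiener/fatness continuity statement and the oscillation-decay proof of $(ii)$ --- is correct and matches the sources the paper cites.
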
  
\begin{proof} 
Here  $(i) $ in \eqref{2.5} is a standard  Caccioppoli  inequality and
$(ii) $  for  $ y \in   \ti K  $    follows from uniform $ (r_0, p)$-fatness of  $  \ti K  $ 
 and  essentially Theorem 6.18  in  \cite{HKM}.  Combining this fact with  
  \eqref{2.1}  $ (iii)$ we  obtain  $ (ii). $   	
  \end{proof}

\begin{lemma} 
\label{lem2.6}
Let  $    {\mathcal{\ti A}}, p,  \ti f,  \ti K, r_0, z,  \rho, \ti u   $ be  as in  Lemma \ref{lem2.5}. 
	 Then there exists a unique finite positive Borel measure $ \ti \nu $ with support contained in $ \ti K \cap B ( z, 3 \rho) $ such that
\begin{align}  
 \label{2.6}
		 \int  \langle \ti {\mathcal{A}} (\nabla \ti u(y)),\nabla\ph(y)\rangle \, dy \, = \, -  \int \, \ph \, d\ti{\nu} \quad \mbox{whenever}\, \, \ph  \in C_0^\infty ( B(z,2\rho) ).
\end{align}
Moreover,  there exists $ \bar c \geq 1, $ with the same dependence as $ c_1 $  in  Lemma \ref{lem2.5},  for which    
\begin{align}
  \label{2.7}    
   \bar c^{-1}\, r^{ p - n} \ti \nu (B( w,  r/2 ))\leq   \max_{B(w, r)} \ti u^{ p - 1}\leq \bar c r^{p-n}\ti \nu(B(w, 2 r))
\end{align}  
	whenever $ 0 < r  <  \min\{ r_0, \rho \}$ and $ w  \in  \ti K \cap  B ( z, \rho)$. 
  Furthermore, suppose  for some constant  $ \La   \geq  1 $   that  if   $ w   \in   \ti K  \cap B ( z, \rho ), $    
       and  $ 0 < s  < r, $   there  exists     $ a_s ( w )  \in  B ( w, r ) \sem \ti K $  with     
\[
 \La  \,  d ( a_s ( w ) ,   \ar [ B ( z, 2 \rho )  \sem  \ti K ] )  \geq   s. 
 \]
 Suppose also     that  whenever   $ w_1, w_2 \in 
B ( z, 2 r )  \sem \ti K  $ and   $ 0 < r   \leq \rho/ \La, $  
    there exists a  rectifiable curve
  $ \tau : [0, 1] \rar  B ( z, 2 \rho ) \sem \ti K $ with $ \tau ( 0 ) = w_1$ and $\tau ( 1 ) = w_2, $
and such that 
\begin{align} 
\label{2.7a}  
\begin{split}
&(a)\hs{.2in}  \mathcal{H}^1 ( \tau ) \, \leq \,  \La  \, | w_1 - w_2 |,  \\ 
&(b)\hs{.2in}  \min \{  \mathcal{H}^1 ( \tau ( [ 0, t] ) ) , \, \mathcal{H}^1 ( \tau (
[ t, 1 ] ) )  \, \} \,   \leq  \,  \La  \, d ( \tau ( t ),   \ar  [ B ( z, 2 \rho ) \sem \ti K] ), \, t \in (0, 1).
\end{split}
\end{align}
If   $ w \in  B ( z,  r/2) \cap \ti K$  then 
\begin{align}
 \label{2.8}  
 [r^{p-n}  \ti \nu ( B ( w, 2 r ) ) ]^{1/(p-1)}  \approx   \ti u ( a_{r} (w) )   \approx    \max_{B(w,  r)} \ti u     \approx   [r^{p-n}  \ti \nu ( B ( w, r/2) ) ]^{1/(p-1)}.
 \end{align}
 Ratio constants depend only  on   the data,  the uniform  $(r_0, p)$-fatness constant for $ \ti  K, $   and   $ \La. $  
\end{lemma}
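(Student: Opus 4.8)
The plan is to establish, in turn: (i) existence and uniqueness of $\ti\nu$; (ii) the two-sided bound \eqref{2.7}; (iii) its refinement \eqref{2.8} under the corkscrew and Harnack-chain hypotheses. For (i): by Lemma \ref{lem2.5} the zero extension of $\ti u$ across $\ti K\cap B(z,3\rho)$ is continuous, and, being $\mathcal{\ti A}$-harmonic off $\ti K$, nonnegative, and vanishing on $\ti K$, it is $\mathcal{\ti A}$-subharmonic in $B(z,2\rho)$: if $D\Subset B(z,2\rho)$ and $h$ is $\mathcal{\ti A}$-harmonic in $D$ with $h\ge\ti u$ on $\partial D$, then $h\ge 0$ in $D$ by the minimum principle, so $h\ge\ti u$ on all of $\partial(D\sem\ti K)$ and hence $h\ge\ti u$ in $D$. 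Thus $\ph\mapsto-\int\langle\mathcal{\ti A}(\nabla\ti u),\nabla\ph\rangle\,dy$ is a nonnegative distribution on $B(z,2\rho)$, so by the Riesz representation theorem it is integration against a unique positive Radon measure $\ti\nu$ obeying \eqref{2.6}; $\mathrm{supp}\,\ti\nu\subset\ti K$ since the distribution vanishes on $B(z,2\rho)\sem\ti K$, and finiteness follows on testing \eqref{2.6} against a cutoff equal to $1$ on $B(z,7\rho/4)$, using \eqref{1.5}(b) and the local $L^p$ bound on $\nabla\ti u$ from \eqref{2.5}(i).

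For (ii), fix $w\in\ti K\cap B(z,\rho)$ and $0<r<\min\{r_0,\rho\}$. The bound $r^{p-n}\ti\nu(B(w,r/2))\le\bar c\,\max_{B(w,r)}\ti u^{\,p-1}$ is obtained by testing \eqref{2.6} with $\psi\in C_0^\infty(B(w,r))$, $0\le\psi\le1$, $\psi\equiv1$ on $B(w,r/2)$, $|\nabla\psi|\le c/r$, so that $\ti\nu(B(w,r/2))\le\int\psi\,d\ti\nu\le cr^{-1}\int_{B(w,r)}|\nabla\ti u|^{p-1}\,dy$ by \eqref{1.5}(b), and then bounding the last integral via H\"older together with the Caccioppoli inequality \eqref{2.5}(i) (and \eqref{2.1}(i) where $\ti K$ is far). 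The reverse bound $\max_{B(w,r)}\ti u^{\,p-1}\le\bar c\,r^{p-n}\ti\nu(B(w,2r))$ is the standard capacitary estimate for $\mathcal{\ti A}$-harmonic functions vanishing on uniformly $(r_0,p)$-fat sets: using the maximum principle in $B(w,r)\sem\ti K$ one replaces $\max_{B(w,r)}\ti u$ by $\max_{\partial B(w,r)}\ti u$ and compares $\ti u$ with a fixed multiple of the capacitary function of $\ti K\cap\overline{B(w,2r)}$ relative to $B(w,4r)$ (for the operator dual to $\mathcal{\ti A}$), whose Riesz mass is $\ge c^{-1}r^{n-p}$ by Definition \ref{def2.4} and \eqref{2.4}; the comparison principle and the homogeneity \eqref{1.5}(a) then give the claim, cf.\ \cite[Ch.\ 6]{HKM} and \cite{LN2}.

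For (iii), grant the corkscrew condition and the Harnack-chain condition \eqref{2.7a}. The key input is the Carleson-type estimate $\max_{B(w,r)}\ti u\approx\ti u(a_r(w))$ for $w\in\ti K\cap B(z,r/2)$ (the same estimate holding on $B(w,\la r)$ for any fixed $\la\ge1$): here ``$\gtrsim$'' is immediate, and ``$\lesssim$'' is the standard boundary-Harnack argument — for $x\in B(w,r)\sem\ti K$ with $\de=d(x,\ti K)$ one has $\ti u(x)\le c(1+r/\de)^{\ga}\ti u(a_r(w))$ by iterating the interior Harnack inequality \eqref{2.1}(ii) along the curve joining $x$ to $a_r(w)$ furnished by \eqref{2.7a} (a chain of $\lesssim 1+\log(r/\de)$ Harnack balls), and $\ti u(x)\le c(\de/r)^{\hat\sigma}\max_{B(w,2r)}\ti u$ from the H\"older decay \eqref{2.5}(ii) at the nearest point of $\ti K$; interpolating at the value of $\de$ where these balance, and iterating over dyadic scales using that $\ti u$ is bounded on compact subsets of $B(z,3\rho)$, yields ``$\lesssim$''. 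Granting this, $s\mapsto\max_{B(w,s)}\ti u^{\,p-1}$, and hence via \eqref{2.7} also $s\mapsto s^{p-n}\ti\nu(B(w,s))$, is doubling near $w$; combining this doubling with \eqref{2.7} at scale $r$ and with $\max_{B(w,r)}\ti u\approx\ti u(a_r(w))$ shows that the four quantities in \eqref{2.8} are all comparable, with ratio constants depending only on the data, the uniform $(r_0,p)$-fatness constant of $\ti K$, and $\La$.

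The step I expect to be the main obstacle is the Carleson-type estimate in (iii): controlling $\max_{B(w,r)}\ti u$ by the single value $\ti u(a_r(w))$ requires ruling out the maximum being attained very near $\ti K$ (via the quantitative H\"older decay \eqref{2.5}(ii)) while simultaneously transferring values between deep points (via \eqref{2.7a} and interior Harnack), and the bookkeeping for points at intermediate distance from $\ti K$, together with the iteration that closes the estimate, is precisely where the corkscrew and Harnack-chain hypotheses enter in an essential way; in the $\mathcal{\ti A}$-harmonic setting this is the substance of the boundary-Harnack results of \cite{LN2,LLN}, which I would invoke to complete the proof.
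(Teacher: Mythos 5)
Your proposal follows essentially the same route as the paper, which itself disposes of this lemma largely by citation: existence/uniqueness of $\ti \nu$ via the Riesz representation theorem applied to the zero extension of $\ti u$ (the paper cites \cite[Theorem 21.2]{HKM}), the left-hand inequality in \eqref{2.7} by the test-function/H\"{o}lder/Caccioppoli argument you describe, the right-hand inequality by citation (the paper invokes \cite{KZ}, see also \cite{EL}), and the middle comparability in \eqref{2.8} by the Carleson-type argument using \eqref{2.5}~$(ii)$, \eqref{2.1}~$(ii)$, \eqref{2.7}, and the corkscrew/Harnack-chain hypotheses, with the outer comparabilities being the doubling of $\ti\nu$. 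Your treatment of parts (i) and (iii) is sound and matches the paper's intent.

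The one step where your own mechanism does not go through as written is the right-hand inequality $\max_{B(w,r)}\ti u^{\,p-1}\leq \bar c\, r^{p-n}\ti\nu(B(w,2r))$. Comparing $\ti u$ with (a multiple of) the capacitary function $v$ of $\ti K\cap \bar B(w,2r)$ controls $\ti u$ pointwise by $v$ and relates $\max\ti u$ to $\mbox{Cap}_p(\ti K\cap\bar B(w,2r), B(w,4r))\approx r^{n-p}$, i.e., to the Riesz mass of $v$ --- not to $\ti\nu$, the Riesz mass of $\ti u$ itself, which is what appears in \eqref{2.7}. For $p=2$ one passes between the two by the reciprocity $\int v\, d\ti\nu = \int \ti u\, d\gamma$ (with $\gamma$ the measure of $v$), but this identity fails for nonlinear $\mathcal{\ti A}$; the nonlinear estimate requires the Wolff-potential upper bound for subsolutions together with an iteration over scales, which is precisely the content of the Kilpel\"{a}inen--Zhong result \cite{KZ} that the paper cites. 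Since you also defer to the literature here, this is a citation-level rather than a fatal gap, but the comparison argument you sketch should not be presented as a proof of that inequality.
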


\begin{proof}
 For the proof  of  \eqref{2.6},  see  \cite[Theorem 21.2]{HKM}   
 The left-hand inequality in  \eqref{2.7}   follows from   \eqref{2.6},    \eqref{1.5},    and  H\"{o}lder's inequality,   
 using a  test function, $ \ph, $ with  $ \ph \equiv 1 $ on $ \bar B ( w, r/2 ).  $   
  The  proof of the right-hand inequality in  \eqref{2.7}  follows from \cite{KZ} (see also \cite{EL}).   
  Here  \eqref{2.7a} is  equivalent  to  a   Harnack chain condition used  in the definition of an  
  non-tangentially accessible domain  (see  \cite{JK}). The proof    of  the middle   inequality
    in  \eqref{2.8} follows from an argument  often attributed to   Carleson  (see \cite{AS})     
    and  just uses  \eqref{2.5} $(ii), $     \eqref{2.1} $(ii),$   and    \eqref{2.7}. 
     The first and  last  inequalities in \eqref{2.8}  give   the     ``doubling  property''  of  $ \nu  $  measure.    
\end{proof} 
\begin{remark}
 \label{rmk2.7}  
Uniform $(r_0, p)$-fatness of $ \rn{n} \sem  D $  for some $ r_0 > 0 $  is a sufficient condition for solvability  
 of the Dirichlet problem for $\mathcal{\ti A}$-harmonic PDEs in a  bounded domain $ D $   
 in  the sense that if   $ \ph $ is  a  continuous function on  $ \ar  D, $ then there exists  an $ \mathcal{\ti A}$-harmonic function $ \Ph $ in $D$  with continuous boundary values equal to $ \ph $ on $ \ar D. $  In fact,  if  $  \mathbb{R}^{n}\setminus D$ is uniformly $(r_0, p)$-fat
  then   for every $w\in \mathbb{R}^{n}\sem D$ and $0<r<r_0$
  \[
\int_{0}^{r_0}\left[\frac{\mbox{Cap}_p((\mathbb{R}^{n}\setminus  {D})\cap \bar B(w,r), B(w,2r))}{\mbox{Cap}_p(\bar{B}(w,r), B(w,2r))}\right]^{\frac{1}{(p-1)}}\frac{dr}{r}=\infty. 
\]  
That is, uniform $(r_0, p)$-fatness implies Wiener regularity  (see \cite[Theorem 6.33]{HKM}). 
We also remark that if   $ E \subset B ( 0,  \rho  )  $ is  a  closed convex set with  $\mbox{diam}(E)=1$ and  $ \mathcal{H}^{k} ( E )   > 0   $ for some   positive integer $ k  >    n  - p$  then  $ E $ is  $ (1, p)$-uniformly fat   and  $ \mbox{Cap}_{\mathcal{\ti A}}(E, B ( 0, 2 \rho ) ) \approx 1 $  with ratio constants depending only on the data when $ 1 < p < n $ while for $ p \geq n $  these constants depend on the data and also  $ \rho. $   On the other hand,  if $\mathcal{H}^{k} ( E ) < \infty $  for some positive integer $ k \leq  n - p$  then  $ \mbox{Cap}_{\mathcal{\ti A}}( E) = 0 $ (see \cite[Chapter 2]{HKM}).
 \end{remark}   
\subsection{Definition of  $ \mathcal{A}$-capacitary and $\mathcal{A}$-harmonic  Green's  functions} 
\begin{definition} 
\label{def2.8} 
Let   $ 1 < p  <  n$ and  $f$ be  as in  \eqref{1.5} and  let $  E $  be a  compact convex set with      $ \mbox{Cap}_{\mathcal{A}}( E) > 0$.  Then the  $ \mathcal{A}$-capacitary function 
 of $ E$, say $  \ti U,  $ is  the unique  continuous function $ \ti U \not  \equiv 1,  0 < \ti  U \leq  1,  $ on $ \rn{n} $    satisfying 
\begin{align}
\label{2.9}
\begin{split}
&(a) \hs{.2in}  \, \ti U\, \, \mbox{is}\, \,  \mathcal{A}\mbox{-harmonic in}\, \,  \rn{n} \sem E.  \\
&(b) \hs{.2in}  \,  \ti U \equiv 1\, \, \mbox{on}\, \,  E  \, \, \mbox{and}\, \,  \ti U (x) \to  0   \, \, \mbox{uniformly  as}\,\, |x|  \to \infty.       \\
& (c) \hs{.2in} \, \,  | \nabla \ti U | \in   L^p ( \rn{n} )\, \,  \mbox{and}\, \, \ti U \in L^{p^*}  ( \rn{n}  )\quad \mbox{for}\, \,  p^* =  \frac{np}{n-p}. \\
 & (d) \hs{.2in}   \,  \,   \mbox{Cap}_\mathcal{A}  (E)  = \int_{ \rn{n}} \lan   \mathcal{A}  ( \nabla \ti U ) , \nabla \ti U  \ran \,  dy.
\end{split}
\end{align}
\end{definition}  
 For existence and uniqueness of  $\ti U $ see Lemma  4.1  in  \cite{AGHLV}.   We  note that if  $  \ti \nu $  denotes the measure associated with  $  \ti  U $  as in  Lemma  \ref{lem2.6} then    $     \ti  \nu  ( E )  =   \mbox{Cap}_{\mathcal{A}} ( E )$ (see   \cite[Lemma 4.2]{AGHLV}). Therefore, if  $  E  \subset  B ( 0, 1 ) $  with $ \mbox{diam}(E) \geq 1/2 $  and $ n - 1 < p  < n$  then  
from  \eqref{2.8} and  Remark \ref{rmk2.7}  we have 
\begin{equation}   
\label{2.9a} 
c^{-1}   \leq   \mbox{Cap}_{\mathcal{A}} ( E )     \leq   c  \max_{B ( 0, 2)} ( 1  - \ti  U )      
\end{equation}    
where  $ c $  depends only on the data.  
 
In order to define  an $  \mathcal{A}$-harmonic Green's function with pole at $ \infty $ when $p \geq n, $   we first have  to  define a fundamental   solution, say  $  F, $   with pole at $0$  in $  \rn{n} $  when $ p \geq n. $  Definitions for $p=n$ and $n<p<\infty$ are different and we start with $p=n$. 
\begin{definition}
\label{def2.9}
If  $ p  = n $  we say that $  F$  is a fundamental solution to  
$\nabla\cdot \mathcal{A}(\nabla  F)=0$ in $\mathbb R^{n}$ 
with pole at $0$ if  
\begin{align}
\label{2.10} 
\begin{split}
& (i)\hs{.2in}  \mbox{$  F $ is  $  \mathcal{A}$-harmonic in  $ \mathbb{R}^n  \sem \{0\}$}.  \\
& (ii)\hs{.2in}  F\in W_{\mbox{\tiny loc}}^{1,l}(\mathbb R^{n}) \, \, \mbox{for}\, \,  1 < l < n, \, \, F ( e_1 )  = 1, \mbox{and}\\
 & \hs{.6in} |F ( x )|  =  O ( \log |x| ) \, \, \mbox{in a neighborhood  of  $ \infty$}. \\ 
&(iii) \hs{.2in} \int \lan \mathcal{A} (\nabla  F(z)), \nabla \he ( z ) \ran\, dz = -\theta(0) \quad \mbox{whenever }\,\, \theta\in C_0^\infty(\mathbb R^{n}).
\end{split}
\end{align}
If  $ p  > n $  we say that $ F$  is a fundamental solution to  
$\nabla\cdot \mathcal{A}(\nabla F)=0$ in $\mathbb R^{n}$ 
with pole at $0$ if \begin{align}
\label{2.11} 
\begin{split}
 (i)\, \,\,  & \mbox{ $F $ is  $  \mathcal{A}$-harmonic in  $ \mathbb{R}^n  \sem \{0\}$.}  \\    
(ii)\, \, & F\in W_{{\rm loc}}^{1,p}(\mathbb R^{n}),\, \, \mbox{$F$ is continuous in $\mathbb R^{n}$},\, \,  F (0)=0,\, \,  F >0 \, \, \mbox{in}\, \,  \mathbb R^{n}\setminus\{0\}.  \\
(iii)\, & \int \lan \mathcal{A} (\nabla  F(z)), \nabla \he ( z ) \ran\, dz = -\theta(0) \quad \mbox{whenever} \, \, \theta\in C_0^\infty(\mathbb R^{n}).
\end{split}
\end{align}
\end{definition}  Existence   and  uniqueness of  $  F $  in \eqref{2.10} and  \eqref{2.11}  are  proved in    Lemma  4.4     
and   Lemma 4.6     of  \cite{ALSV},  respectively.   
   \begin{definition} 
  \label{def2.10} 
Let $p\geq n$ and for a given  compact, convex set $ E \subset \rn{n}$  with $0\in E$
  we say that $U$ is the $ \mathcal{A}$-harmonic Green's function for  $ \rn{n} \sem E $ 
  with pole at $ \infty, $  if   
  $U : \rn{n} \sem E \to ( 0, \infty ) $ has continuous 
  boundary value $0$ on $ \ar E$,    $U $ is  $ \mathcal{A}$-harmonic in  $ \rn{n} \sem E$,  and     $ U(x) =  F(x)  +  k(x)  $   where $ k(x) $ is  
  a bounded function  in a neighbourhood of $ \infty$ and $F$ is the fundamental solution as in Definition \ref{def2.9}.  
  \end{definition}  
  \begin{remark}  
  \label{rmk2.11} 
  In   \cite{ALSV} the authors show that   $ U  $ exists  and is  unique  if and only if  the convex compact set $ E $ is  either  $ (a) $ non-empty when $ p  > n$ or  $(b)$  contains at least two points when $ p   = n. $   If $ U $  exists  then it was  also shown that  $     k  \leq 0 $ in $ \rn{n}   \sem E $  and   $ k $  is  H\"{o}lder  continuous in  a  neighbourhood of  $ \infty $  with  $  {\ds  \lim_{ x \to \infty}}  k  (x) = k ( \infty).$    They    then   define   
\[ 
\mathcal{C}_{\mathcal{A}}( E ) :=
\left\{
\begin{array}{ll}
  e^{- k ( \infty )/\ga} &  \mbox{when}\, \, p = n, \\ 
 (- k ( \infty ))^{p-1} & \mbox{when}\, \,  p > n. 
\end{array}
\right.
\]  
If  $ E $  is  a  single point  and $ p = n $  (so  $U$ does not exist), set  
$ \mathcal{C}_{\mathcal{A}}( E ) := 0. $         
Here $\gamma$ is a  constant  depending only on the data which occurs in the asymptotic expansion of  $ F ( x ) $ as  $ x \to  \infty. $ 
From the definition of  $\mathcal{C}_{\mathcal{A}}( E )$  and
     translation, dilation invariance of  $\mathcal{A}$-harmonic functions  it follows  as  in   
   \eqref{2.4} that  if      $ x_0  \in \rn{n}$, $r > 0$, and  $ E $ is  a  convex compact set  then   
      \begin{align} 
 \label{2.12}
 \begin{split}
\mathcal{C}_{\mathcal{A}} (r  E + \{x_0\} )=
  \left\{
  \begin{array}{ll}
      r \mathcal{C}_{\mathcal{A}} (E)  & \mbox{when}\, \,  p =n,\\
      r^{p-n}  \mathcal{C}_{\mathcal{A}} (E) & \mbox{when} \, \, p > n.
\end{array}    
           \right.
\end{split}
\end{align}   Also if  $  \nu  $   is  the  measure  associated with  $ U $  as  in  Lemma   \ref{lem2.6}   then  (see Lemmas 5.2, 5.3 in  \cite{ALSV}),   
    $   \nu ( E ) =  1$. Hence if  $  E  \subset B ( 0, 1 ) $  with  $  1/2  \leq  \mbox{diam}(E)$   it follows  from  \eqref{2.8}  that
  
\begin{equation}   
\label{2.12a}   
 \max_{B ( 0, 2 )}  U  \approx 1   \mbox{ where the proportional constants depend only on the data}. 
 \end{equation} 
Finally, if  $ E_1 \subset E_2 $  are compact convex  sets and  $ U_1$ and $U_2$  the  corresponding  $\mathcal{A}$-harmonic Green's functions with pole at  $ \infty$   then  
\begin{equation} 
\label{2.12b}   
U_1 \geq U_2   \, \, \mbox{in}\, \,  \rn{n} \quad \mbox{so}\quad   \mathcal{C}_{\mathcal{A}} (E_1)  \leq  \mathcal{C}_{\mathcal{A}} (E_2). 
 \end{equation}   
  \end{remark}     

\subsection{Existence in Theorem \ref{thmA}} 
To show existence and uniqueness for   $ u_1$ and $u_2 $  in Theorem  \ref{thmA}   we shall  also need the following lemma.
\begin{lemma} 
 \label{lem2.12}  
 Fix $ p$ with $1 < p < \infty$ and $\al \in (0, \pi)$ and suppose  $0< r  \leq   R / 10$.   Let  $ v $ be  
 the  $ \mathcal{A}$-harmonic function in  
 $ D = [ K ( \al ) \sem   \bar B ( r e_1, \frac{r\al}{100})] \cap B ( 0, R ) $  with continuous boundary 
 values  $  v  \equiv  1$  on  $ \ar  B ( r e_1, \frac{r\al}{100})$ and   $v \equiv 0  $  on 
 $ [ \ar B (0, R ) \cap K ( \al ) ] \cup [\ar K ( \al ) \cap B (0, R)]. $  Then there exists  $ c \geq 1 $  such that 
\begin{align}    
\label{2.13}     
- c  \, \lan  \nabla   v  ( x ), {\ts \frac{x - r e_1}{|x - r e_1|}} \ran  \geq    v (  x   )  \quad \mbox{whenever} \, \, x\in D. 
\end{align}   
Here $ c $  depends   on the data and $ \al $ if  $ 1 < p \leq n - 1,   $  while  $ c $ depends only on the data if  $ p  > n - 1$.  
 \end{lemma}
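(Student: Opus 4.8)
The plan is to exploit the invariance of the equation $\nabla\cdot\mathcal{A}(\nabla v)=0$ under translations and dilations, together with the $p$-homogeneity of $f$ (see \eqref{1.5}), by comparing $v$ with its dilates centred at the point $re_1$. This comparison gives at once the one-sided bound $\lan\nabla v(x),\,x-re_1\ran\le0$; a refinement of it on the inner sphere, driven by a Hopf-type lower bound for $|\nabla v|$ there, then yields \eqref{2.13}.

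First I would record the elementary geometric fact that $K(\al)$ translates into itself along its axis: $\overline{K(\al)}+\{\e e_1\}\subset K(\al)$ for every $\e>0$. (When $\cos\al\ge0$ this follows from $|x-\e e_1|\ge|x|-\e$; when $\cos\al<0$, from $|x-\e e_1|\le|x|+\e$; in either case $x_1-\e>\cos\al\,|x-\e e_1|$ whenever $x_1\ge\cos\al\,|x|$.) Next, for $t>1$ put $v_t(x)=v\big(re_1+t(x-re_1)\big)=v\big((1-t)re_1+tx\big)$. Since $\mathcal{A}(t\xi)=t^{p-1}\mathcal{A}(\xi)$, the affine map $x\mapsto(1-t)re_1+tx$ sends $\mathcal{A}$-harmonic functions to $\mathcal{A}$-harmonic functions, so $v_t$ is $\mathcal{A}$-harmonic in $D_t:=\{x:\,re_1+t(x-re_1)\in D\}=re_1+t^{-1}(D-re_1)$. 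Because $K(\al)$ is a cone with vertex $0$, a short computation shows that $D_t$ again has the shape of $D$: its cone part is $K(\al)+\{r(1-1/t)e_1\}\subset K(\al)$; its removed ball is $\overline{B(re_1,\tfrac{r\al}{100t})}\subset\overline{B(re_1,\tfrac{r\al}{100})}$; and its outer ball is $B((1-1/t)re_1,\,R/t)$, whose closure lies in $B(0,R)$ since $R/t<R-r(1-1/t)$ when $r<R$. One reads off that $v_t\le v$ on $\ar(D\cap D_t)$: the only part of $\ar D$ meeting $\overline{D_t}$ is the inner sphere $S:=\ar B(re_1,\tfrac{r\al}{100})$, where $v\equiv1\ge v_t$; and on the part of $\ar(D\cap D_t)$ coming from $\ar D_t$ the point $re_1+t(x-re_1)$ lies on $\ar K(\al)$ or on $\ar B(0,R)$, so $v_t=0\le v$ there. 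As $v_t$ and $v$ are bounded and $\mathcal{A}$-harmonic in $D\cap D_t$, the comparison principle gives $v_t\le v$ in $D\cap D_t$; letting $t\to1^+$ and using $v\in C^1(D)$ (Lemma \ref{lem2.2}) yields $\lan\nabla v(x),\,x-re_1\ran=\tfrac{d}{dt}\big|_{t=1}v_t(x)\le0$ for all $x\in D$.

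For the quantitative estimate I would strengthen the comparison on $S$. The sphere $S$ is smooth and separated from $\ar K(\al)\cup\ar B(0,R)$ by a definite distance, so boundary regularity for $\mathcal{A}$-harmonic functions gives that $v$ extends to a $C^{1,\be'}$ function up to $S$ with controlled bounds; since $v\equiv1$ on $S$, its tangential gradient vanishes there, while Hopf's lemma for $v$ on $D$ at a point $x_0\in S$ --- with Lemma \ref{lem2.1} furnishing a lower bound for $v$ at a nearby interior point --- gives $\lan\nabla v(x_0),\,x_0-re_1\ran\le-c^{-1}$ for all $x_0\in S$. A Taylor expansion at $S$ then gives $v_t(x_0)=1+(t-1)\lan\nabla v(x_0),\,x_0-re_1\ran+O\big((t-1)^{1+\be'}\big)\le1-\tfrac12 c^{-1}(t-1)=:\la_t$ on $S$ for $t-1$ small. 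Re-running the boundary analysis with $\la_t v$ in place of $v$ --- on $S$, $\la_t v=\la_t\ge v_t$, and on the rest of $\ar(D\cap D_t)$, $v_t=0\le\la_t v$ --- the comparison principle gives $v_t\le\la_t v$ in $D\cap D_t$. Dividing by $t-1$ and letting $t\to1^+$ gives $\lan\nabla v(x),\,x-re_1\ran\le-\tfrac12 c^{-1}v(x)$ on $D$, whence \eqref{2.13} follows upon dividing by $|x-re_1|$.

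The step requiring real care is this improved comparison: one must produce the Hopf gain on $S$ with a constant uniform in $x_0\in S$ (and, when $p>n-1$, uniform in $\al\in(0,\pi)$), and verify that the dilate does not beat $\la_t v$ anywhere on $\ar(D\cap D_t)$. This is exactly where the stated dichotomy enters: for $p>n-1$ the cone $K(\al)$ --- indeed the limiting slit $K(\pi)$ --- is uniformly $p$-fat, so the boundary estimates for $v$ used above are uniform in $\al\in(0,\pi)$, whereas for $1<p\le n-1$ the $p$-capacity of $\ar K(\pi)$ vanishes and the fatness constant of $\ar K(\al)$ degenerates as $\al\to\pi$, forcing the dependence on $\al$.
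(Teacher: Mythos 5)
Your proposal follows essentially the same route as the paper's proof: dilate about $re_1$, compare $v$ with its dilate via the maximum principle on the common domain, and convert a quantitative gain on the inner sphere into the differential inequality by letting the dilation parameter tend to $1$. Your inequality $v_t\le\la_t v$ is precisely the paper's claim \eqref{2.14} (with $\la_t=1-(\la-1)/\breve c$), and the boundary check splits into the same two cases; the differences (untranslated coordinates, the intermediate comparison $v_t\le v$, phrasing the inner-sphere estimate as Hopf's lemma plus a Taylor expansion rather than as a direct barrier bound on $1-\hat v(\la y)$ as in \eqref{2.15}) are cosmetic.

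The one step that does not work as written is the input you feed to Hopf's lemma — which is exactly the step the paper has to labor over. You say Lemma \ref{lem2.1} furnishes ``a lower bound for $v$ at a nearby interior point.'' But Hopf's lemma at $x_0\in S$ is applied to the nonnegative function $1-v$, which vanishes on $S$, and its hypothesis is a positive lower bound for $1-v$ (equivalently an upper bound for $v$ strictly below $1$) on a sphere of radius comparable to $r\al/100$ about $re_1$. A lower bound for $v$ there is vacuous, since $v$ is near its maximum $1$ close to $S$, and Harnack applied to $v$ cannot produce the gain. What is needed, and what the paper proves, is: $v$ decays to $0$ near $\ar K(\al)$ with a H\"older modulus (Lemma \ref{lem2.5} $(ii)$ — this is where uniform $p$-fatness of the \emph{complement} $\rn{n}\sem K(\al)$, not of $K(\al)$ itself, enters, and where the dichotomy between $p>n-1$ and $1<p\le n-1$ originates); hence $1-v$ is bounded below at definite distance from $S$; then Harnack's inequality applied to $1-v$ (which is $\nabla\ti f$-harmonic for $\ti f(\eta)=f(-\eta)$) propagates this to $1-v\ge c_*^{-1}$ on $\ar B(re_1,r\al/50)$, and only then does the Hopf/barrier argument in the annulus give the uniform gain on $S$. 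With that chain supplied your argument closes and coincides with the paper's; note also that your final division by $|x-re_1|$ to reach the normalized form \eqref{2.13} is the same step the paper takes, and is only innocuous where $|x-re_1|$ is bounded.
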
  

 \begin{proof}   
 Let  $  \hat D   =  \{ y :   y   +  r e_1 \in D \} $ and define $ \hat v $  on  
$ \hat D $   by  $ \hat v ( y )  = v ( y +  r e_1)$ for $y \in  \hat D$ (see Figure \ref{DhDhDl}).     Given  $  \la$ with $1 <  \la  <  1001/1000$, set    
 $  \hat D  ( \la )  = \{  y  \in  \hat D  :  \la y   \in  \hat  D \}.$

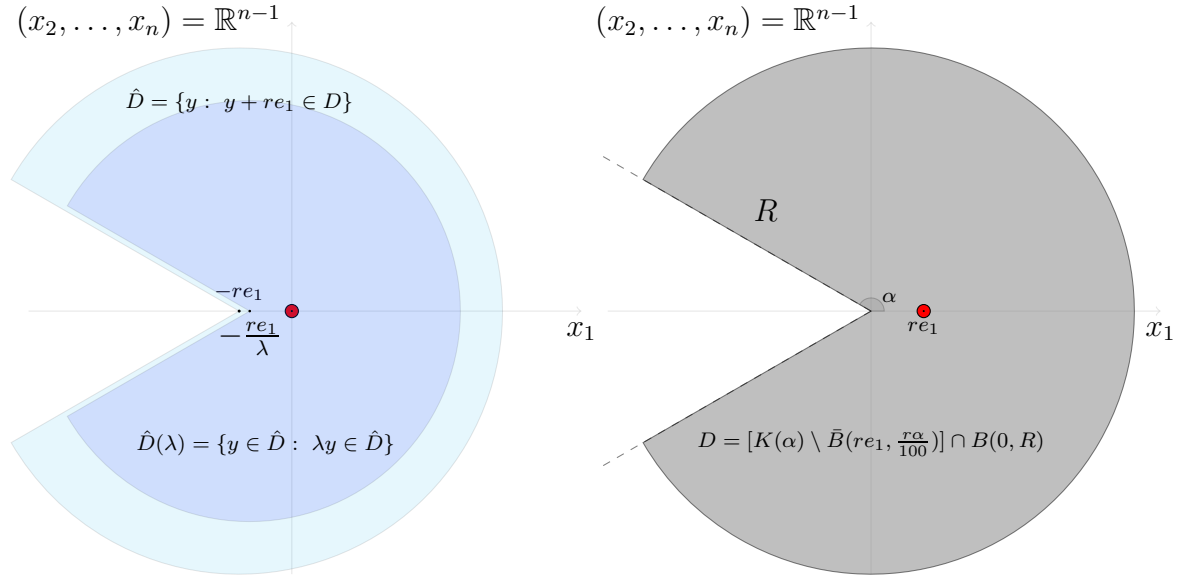
\begin{figure}[!ht]
  \begin{center}
\begin{tikzpicture}[scale=.35]
\begin{scope}[shift={(-10,0)}]
\coordinate (A) at (2,0);
\coordinate (A-) at (-2,0);
\coordinate (Alambda) at (-1.6,0);

\draw[->, gray, opacity=.2] (-10,0)--(11,0);
\node[below] at (11,0) {$x_1$};
\draw[->, gray, opacity=.2] (0,-10)--(0,11);
\node[left] at (0,11) {$(x_2,\ldots, x_n)=\mathbb{R}^{n-1}$};



\draw[fill=red] (0,0) circle (.25cm);
\draw[fill] (0,0) circle (.5pt);

\draw[fill=cyan, opacity=.1] (-2,0) -- ++(150:10) arc (150:-150:10)--cycle;
\draw[fill] (A-) circle (1pt);
\node[above] at (A-) {\tiny $-re_1$};

\begin{scope}[scale=.8]
\draw[fill=blue, opacity=.1] (-2,0) -- ++(150:10) arc (150:-150:10)--cycle;
\node[below] at (Alambda) {$-\frac{re_1}{\lambda}$};
\draw[fill] (Alambda) circle (1pt);
\end{scope}

\node at (-1,-5) {\tiny $\hat{D}(\lambda)=\{y\in \hat{D}: \, \, \lambda y\in \hat{D}\}$};
\node at (-2,8) {\tiny $\hat{D} = \{y: \, \, y+re_1\in D\}$};
\end{scope}

\begin{scope}[shift={(12,0)}]

\draw[->, gray, opacity=.2] (-10,0)--(11,0);
\node[below] at (11,0) {$x_1$};
\draw[->, gray, opacity=.2] (0,-10)--(0,11);
\node[left] at (0,11) {$(x_2,\ldots, x_n)=\mathbb{R}^{n-1}$};

\draw[opacity=.5, fill=gray] (0,0) -- ++(150:10) arc (150:-150:10)--cycle;
\draw[dashed, opacity=.5]  (0,0) --++(150:12) ;
\draw[dashed, opacity=.5] (0,0) --++(-150:12);

\draw[fill=gray, opacity=.2] (0,0) --++(150:.5) arc (150:0:.5) --cycle;
\node at (0.7, 0.5) {\tiny $\alpha$};

\draw[fill=red] (2,0) circle (.25cm);
\draw[fill] (2,0) circle (.5pt);
\node[below] at (2,0) {\tiny $re_1$};

\node at (0,-5) {\tiny $D=[K(\alpha)\setminus \bar{B}(re_1, \frac{r\alpha}{100})]\cap B(0,R)$};
\node[above] at (-4,3) {$R$};
\end{scope}

\end{tikzpicture}
  \end{center}
  \caption{The sets $D$, $\hat{D}$, $ \hat  D (\la)$. }
  \label{DhDhDl}
\end{figure} 
 
 From the  definition of  $ D,  \hat D,  v,  \hat v, $ and translation and dilation invariance of  $  \mathcal{A}$-harmonic functions  
 we see that  $ y   \mapsto \hat v (y)$ and $y \mapsto  \hat v ( \la y )$ are both  $  \mathcal{A}$-harmonic  in   $ \hat  D (\la)$.   If  
 \[
 h ( y )  :=  \frac{ \hat  v ( y )  - \hat  v ( \la y )  }{ \la - 1}\quad \mbox{for}\, \, y  \in  \hat  D ( \la )
 \]
 we claim  that    
\begin{align}
 \label{2.14}   
 \breve{c} \,  h ( y )  \geq  \hat v ( y ) \quad \mbox{for} \, \, y\in  \hat D ( \la )
 \end{align} 
 where   $ \breve{c} \geq 1 $ has the same dependence as  $ c  $ in  Lemma \ref{lem2.12}.   
 Using the boundary maximum principle for $ \mathcal{A}$-harmonic  functions and  continuity of  $  h$ and $\hat v  $  we   see that it
   suffices to prove   \eqref{2.14}  when   $ y  \in  \ar  \hat D ( \la ). $  To do this we note 
from the definition of  $ \hat  D  ( \la ) $  that  if $ y  \in  \ar  \hat D ( \la ), $  then either  
$   y =  z/\la  $  for  some  $ z   \in  \ar \hat D $  with $ \hat v (z) = 0 $   or   
$  y  \in  \ar \hat D $     and  $ \hat v (y) = 1.$   In the  first case we see that   
$ \hat v  ( \la y  )  = 0 $    so    \eqref{2.14} is trivially true.   In the   second case   
let  $ \ti  f ( \eta )  = f (  -  \eta ), $  and    note that  $ 1 - \hat v $ is  $ \nabla \ti f =\mathcal{\ti  A}$-harmonic in   $ \hat  D. $ 
Using this note,   uniform fatness of   $ K ( \al) \cap  B ( 0,  R ),  $  the definition of  $ \hat D, $     \eqref{2.5} $(ii)$ for $\hat v,$ and   
Harnack's  inequality  we deduce  that  if  $  r' =  \frac{  2 \al r }{100  }, $   then      $  1  -  \hat v      \geq   c_*^{-1} $
on  $  \ar   B  ( 0,    r'  ) $    for some  $ c_*  \geq  1 $  with the same  dependence as  $c$  in  the statement of Lemma   \ref{lem2.12}.   Thus    
$ c_*  ( 1 -  \hat v )  \geq   1 $  on   $  \ar  B ( 0, r')  $ and  $(1-\hat v)  \equiv 0 $  on  $  \ar  B ( 0,  r'/2). $  Also this function  is  $ \nabla \ti f = \mathcal{\ti A}$-harmonic  in   $ T = B ( 0, r' )  \sem \bar B ( 0,  r'/2). $    

 Using these  facts   and   a  barrier type argument as in \cite[section 7]{AGHLV}  or   \cite[(4.6)-(4.9)]{ALSV},       it follows (since  $ |y|  =  r'/2$)  that 
\begin{align}
  \label{2.15} 
  \hat v ( y )  -  \hat v ( \la y )  =  1 -   \hat v ( \la y) \geq     ( \la - 1) /( \bar c \, c_*) 
\end{align}
where $ \bar c  \geq 1 $ depends only on the data.  
From  \eqref{2.15} we   conclude   that  \eqref{2.14}  also holds in the  second case when 
$ y \in   \ar  \hat D (\la). $        Thus      \eqref{2.14}  holds on  $  \ar  \hat D ( \la ) $ so 
by the  above maximum principle  is   valid  in  $ \hat D (\la)  .$  Letting  $  \la  \to 1 $ in    
\eqref{2.14}   and   using   
\eqref{2.2}  $ (\hat a),$  as well  as the chain rule,      we get       
\[  
 - c \lan  \nabla \hat v(y), y/|y| \ran  \geq  \hat v (y)   
 \] 
 for  $ y \in  \hat D.$   Clearly this inequality implies \eqref{2.13}.   
\end{proof}  

To begin the  proof  of   existence  in  Theorem  \ref{thmA} for  $ u_1,$  let $ v$ and $D $  be as in 
Lemma  \ref{lem2.12} and put  $ R=l, r = l/10$, for sufficiently large positive integer $l$ (say $l\geq 100$).   Set   $ v_l  = M_l \, v $  where  $ M_l > 0 $ is chosen so that  $ v_l  (e_1) = 1.$  Extend $ v_l $ to  a continuous function in $ \bar B (0, l) $ 
by  defining  $v_l  \equiv 0 $  on  $   [\bar B (0, l ) \sem K ( \al)] \cup  \ar B (0,l)   $  while  $ v_l  \equiv  M_l $ 
on  $ \bar B ( \frac{l e_1}{10},  \frac{l\al}{1000}).$    
  Using  Lemmas  2.1,  2.2,  2.5    and  letting  $ l \rar \infty $ it follows  from  Ascoli's theorem that  a  subsequence  of   $ (v_l), $ 
  also denoted  $ (v_l) $,  converges  uniformly to   $ u_1,  $   an  $ \mathcal{A}$-harmonic  function    in  $  K  (\al) $  that  is  also   
H\"{o}lder  continuous in   $  \rn{n} $  with   $  u_1   \equiv 0 $  on  $ \rn{n} \sem  K  ( \al ). $   

   To   construct  $ u_2 $, we   let   $ r  =  1/l$, $R = l $,   and  let   $ v_l  = \hat M_l \,  v$  for $l=2,3,\ldots,$  
   where  $ \hat M_l $ is chosen  so  that   $ v_l (e_1)  = 1. $    Extend $ v_l $  to  a  continuous function 
   on  $ \bar B ( 0, l )  $  by putting $ v_l  \equiv  0 $  on  $ [B ( 0, l ) \sem  K (  \al )] 
  \cup   \ar B ( 0, l )    $   and $ v_l   \equiv   \hat M_l $  on  $ \bar B ( e_1 /l,   \frac{\al}{100l}).  $ 
 Also    from   Lemmas  2.1,  2.2,  2.5  and \eqref{2.8}  we deduce    for    $ l  >  \rho   >   2/ l ,  $ that  
 there exists   $ c \geq 1$ and $\breve \be \in (0, 1)$ such that  
\begin{align}
 \label{2.16}  
  \max_{B ( 0, l ) \sem B ( 0, \rho)} v_l    \leq  c  v_l ( \rho e_1 )  \leq   c^2   \rho^{- \breve \be } . 
 \end{align}
 Here $c$ and $\breve \be$ depend on  the   data and  $ \al$ if  $ 1 \leq p  \leq  n  - 1 $   
 while  these constants  depend only  on  the data if  $ p  >  n - 1$.
 Letting  $ l \to \infty, $ it follows  from   the above lemmas, and    Ascoli's theorem that  a  
 subsequence  of   $ (v_l), $ also denoted  $ (v_l), $  converges  uniformly to   $ u_2,  $   an  $ \mathcal{A}$-harmonic  function
     in  $  K  (\al) $  that  is  locally H\"{o}lder continuous in   $  \rn{n} \sem \{0\}$  with   $  u_2   \equiv 0 $  on  $ \rn{n} \sem (K  ( \al ) \cup \{0\}). $ 
      Moreover,  \eqref{2.16} holds with  $ v_l  $   replaced by 
$  u_2 $  and from    \eqref{2.13} we have   
\begin{align}
 \label{2.17} 
 - c \, \lan  \nabla u_2(x), x/|x| \ran  \geq  u_2 (x)\quad \mbox{whenever}\, \, x  \in  K ( \al ) .  
 \end{align}

\setcounter{equation}{0} 
 \setcounter{theorem}{0}

 \section{Boundary Harnack inequalities and  uniqueness in Theorem  \ref{thmA}} 
 \label{section3} 
 To prove  that   $u_1$ and $u_2 $  are unique   and  satisfy   \eqref{1.1} in  Theorem  \ref{thmA}   we use  a  variety of   boundary  Harnack inequalities, mostly in Lipschitz domains.    To set the stage for these inequalities, let $ K \subset \rn{n - 1}, n \geq 2,  $  be a non-empty compact set  and   recall  that    $ \ph :   K  \to \mathbb R $ is said to be Lipschitz on $ K  $ provided there
exists $  \hat b,  0 < \hat b  < \infty,  $ such that
\begin{align} 
\label{3.1}   
| \ph ( z' ) - \ph ( w' ) |  \,  \leq \, \hat b   \, | z'  - w' | \quad  \mbox{whenever}\, \, z', w' \in  K.  
\end{align}
The infimum of all  $ \hat b  $ such that  \eqref{3.1} holds is called the
Lipschitz norm of $ \ph $ on $ K, $ denoted by $ \| \ph  \hat  \|_{K}$.   
It is well-known that if $ K  \subset \mathbb R^{n-1} $ is compact, then  $ \ph $  has  an  extension to    
$ \rn{n - 1} $   (also denoted by $ \ph$)  which is differentiable  almost everywhere in $ \rn{n-1}$
and     
\[  
\| \ph  \hat \|_{\mathbb R^{n-1}} = \| \, | \nabla \ph | \,  \|_\infty  \leq c   \| \ph  \hat \|_{K}. 
\]    
In fact, one can take $c=1$ (see \cite[Section 2.10.43]{Federer}). Now suppose that  $ D $ is  an open set, $ w \in \ar D,  \hat r > 0, $  and  
\begin{align}   
\label{3.2}   
\begin{split}
    \partial D\cap B(w, 4  \hat r )&=\{y=(y',y_n)\in\mathbb R^{n} : y_n=
  \phi ( y')\}\cap B(w, 4 \hat r), \\
     D\cap B(w, 4  \hat r )&=\{y=(y',y_n)\in\mathbb R^{n} : y_n > 
  \phi ( y')\}\cap B(w, 4 \hat r)
  \end{split}
\end{align}   
  in an appropriate coordinate system  for some   Lipschitz function $\phi$ on $ \mathbb{R}^{n-1}$ with  
$ \ph ( w' ) = w_n. $ 
 Note from elementary geometry  that   if   $ \ze   \in\partial D  \cap  B ( w, 2\hat r) $  and  $0<s<\hat r  $,  we can find points    \[ 
 a_s(\ze )\in D\cap B(\ze ,s)\quad \mbox{with} \quad d(a_s(\ze),\ar D)\geq c^{-1}s
 \]
for a constant $c$ depending on  $ \| \ph \hat \| $. In the following,    
we let $a_s(\ze )$ denote one such point.   Also  let  $ \De ( w, r ) =  \ar D  \cap B ( w, r), r > 0,   $ and   if  $ \ze  \in  \De  ( w, 2 \hat r )$ and $ t > 1 $ 
 let   
 \[   
\Ga ( \ze  ) = \Ga ( \ze, t ) = \{ y \in D \cap B( w, 4 \hat r)   :  | y - \ze | <  \, t  \, d ( y, \ar D ) \}. 
\]  
 Unless otherwise stated  we always assume that    $ t $  is fixed and   so large that   $ \Ga ( \ze ) $  contains the  inside of  a  truncated cone with vertex at $ \ze,  $  height  $  \hat r ,$  axis along the positive $ e_n $ axis, and of  angle opening $ \he  = \he ( t ) > 0. $  
We  note for $ D, \hat  r$, and $w$  as above that  $ \rn{n}  \sem  (D \cap  B ( w, \hat r )) $ is  uniformly $ (\hat r, p ) $-fat   for $ 1 < p  < \infty. $   Thus,  if  $ v   $   satisfies  the  same hypotheses as  
$ \ti u $ in      Lemmas   \ref{lem2.5} and \ref{lem2.6},   then  these  Lemmas    are valid with  $\ti u $  replaced by    $ v $  in  the above   $ D. $  
 It follows that   (see  \cite[Section 8]{ALSV} and \cite[section 10]{AGHLV}
 there exists $ \bar c \geq 1, $ depending only on the data and  $ \| \ph  \hat \|,$  
such that   if $ 0 < r \leq  \hat r $ and  $ \bar r  =  r/\bar c, $ then   
\begin{align}  
\label{3.3}  
\bar  r^{ p - n}   \int\limits_{B ( w, \bar  r)}   | \nabla v |^{ p }  dx  \leq \bar c    (v ( a_{\bar r} (w)))^{p}
   \end{align} 
   where $w$ is as in \eqref{3.2} and $a_{\bar r} (w)$ is a point associated to $w$ as in the display below \eqref{3.2}. Moreover, there exists $\hat \si \in(0,1),  $ depending only on the data and $\| \ph \hat \|$,  such that 
\begin{align}   
\label{3.4}  
| v ( x ) -  v ( y ) | \leq \bar c \left( \frac{ | x - y |}{\bar r}\right)^{\hat \si}     v   (  a_{\bar r}  ( w ) ) \quad \mbox{whenever}\, \, x, y \in B ( w, \bar r ).
 \end{align} 
  Finally, there  exists a unique finite positive
Borel measure  $ \nu $ on   $ \mathbb{R}^{n}$, with support contained in
$ \bar \Delta(w,r)$, such that 
\begin{align}
\label{3.5} 
\begin{split}
&(a)  \hs{.2in}    
{\ds \int  \lan  \nabla f   ( \nabla v  ),   \nabla \psi \ran dx  =  -   \int   \psi \,  d \nu} \quad \mbox{whenever} \, \, \psi  \in C_0^\infty (  B(w,r) ), \\
 &  (b)   \hs{.2in}  
\bar c^{ - 1} \,  \bar r^{ p - n}   \nu ( \Delta (  w,  \bar r ))\leq (v  ( a_{\bar r} ( w ) ))^{ p - 1}\leq \bar c  \, 
\bar r^{ p - n }   \nu (\Delta (  w, \bar  r )).  
\end{split}
\end{align}
Also in  \cite[section 10]{AGHLV}  for    $ 1 < p <  n $  and  in \cite[section 8]{ALSV}    for  $ p \geq n $  we  updated  to $ \mathcal{A}$-harmonic functions  the  following  Lemmas    
proved in  \cite{LN},  \cite{LN1},     for  $p$-harmonic functions when  $ 1 <  p  < \infty. $   
\begin{lemma}  
\label{lem3.1}  
Let   $D , \hat r,  w, \phi $  be as  in \eqref{3.2}, $p$ fixed, $ 1 < p < \infty, $   and $ 0 < r  \leq \hat r. $   
  Also  let $ v$   be a positive $  \nabla f = \mathcal{A}$-harmonic in  $  D  \cap  B ( w, r )$  and  continuous in $  B ( w, r ) $ with  $ v \equiv 0 $ on  $  B ( w,r ) \sem D. $      
There exists  $  c_{\star}   \geq 1,  $  depending only on  the data and   $\| \ph \hat \|$, 
such that if   $  4 \ti r  =   r/  c_{\star} $ and $ x  \in  B ( w,\ti  r )  \cap   D,   $   then 
\begin{align}
\label{3.6}   
\begin{split}
&(a)  \hs{.2in}  c_{\star}^{-1}  { \ds \frac{v (x)}{ d ( x, \ar D )}  \leq          \lan  \nabla v (x),   e_n \ran   \leq   |  \nabla v (x)  |  \leq    
    c_{\star} \frac{v (x)}{ d (x, \ar D )} } \,   ,\\
&(b) \lim_{\substack{x\to y \\ x \in  \Ga ( y)\cap B(w,2r)}}  \nabla v ( x ) \stackrel{def}{=} 
\nabla v ( y ) \, \, \mbox{exists}  \quad \mbox{for}\, \,  \mathcal{H}^{n-1}\mbox{-almost every}\, \, y \in\Delta( w, \ti r ).
\end{split}
\end{align}
Moreover, $ \De ( w,  \ti r ) $ has a tangent plane
  for  $ \mathcal{H}^{n-1}$-almost every $ y  \in   \De ( w, \ti r )  $.   If
 $ \mathbf{n}( y ) $ denotes the unit normal to this tangent plane pointing
 into
 $ D \cap B ( w, 2 \ti r ), $  then
\begin{align}
\label{3.7}  
\nabla v  ( y )
=  | \nabla v ( y )  | \, \mathbf{n} ( y ) \quad \mbox{for $\mathcal{H}^{n-1}$-almost every} \,\,  y\in\De ( w, 2 \ti  r ) 
\end{align}
and 
\begin{align}   
\label{3.8}  
 \frac{ d \nu }{ d  \mathcal{H}^{n-1}} (y) =   p  \frac{ f ( \nabla v(y))}{|\nabla v (y)|} \quad \mbox{for}\, \, \mathcal{H}^{n-1}\mbox{-almost every}\, \, y\in \Delta(w,2\ti r). 
 \end{align}
\noindent Finally,   there exists    $  q >  p/(p-1)$ and $c_{\star \star}  $  with the same dependence  as  $ c_{\star}  $   such that  
\begin{align}
  \label{3.9} 
  \begin{split}
  & 
 \hs{.2in}  {\ds   \int_{ \De ( w,     \ti  r )  } \,  
 \left(\frac{ f ( \nabla v )}{|\nabla v |}  \right) ^q \, d\mathcal{H}^{ n - 1 } 
\,   \leq \, c_{\star \star}  \,    r^{ (n -  1)(1 - q) }    \left( \int_{\De ( w,   \ti  r )  
 } \, \frac{ f ( \nabla v )}{ |\nabla v|} \, d\mathcal{H}^{ n - 1} \, \right)^{q}.  
 }
\end{split}
\end{align}
  
 \end{lemma}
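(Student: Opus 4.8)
\emph{Proof plan.} These five assertions are the $\mathcal A$-harmonic counterparts of the boundary estimates of Lewis and Nystr\"om \cite{LN,LN1} for positive $p$-harmonic functions that vanish on a portion of a Lipschitz boundary, and the plan is to rerun their scheme with the estimates of Section~\ref{section2} in place of the classical $p$-harmonic ones; this is done in \cite[Section~10]{AGHLV} for $1<p<n$ and in \cite[Section~8]{ALSV} for $p\ge n$. The upper bound in \eqref{3.6}$(a)$ is purely interior: since $w\in\ar D$ and $x\in B(w,\ti r)$ we have $d:=d(x,\ar D)\le\ti r$, the ball $B(x,d)$ lies in $D\cap B(w,r)$, and applying part $(\hat a)$ of Lemma~\ref{lem2.2} with $w=x$ and $r=d/4$ gives $|\nabla v(x)|\le\max_{B(x,d/4)}|\nabla v|\le c\,d^{-1}v(x)$. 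For the sign of the normal derivative I would first prove the monotonicity $\lan\nabla v,e_n\ran\ge 0$ on $D\cap B(w,\ti r)$: because $f$ is homogeneous of degree $p$, $\mathcal A=\nabla f$ is homogeneous of degree $p-1$, so both $y\mapsto v(y+te_n)$ (by translation invariance of the equation) and $\la v$ for $0<\la<1$ are $\mathcal A$-harmonic; the vertical translate $\{y:y+te_n\in D\}$ contains $D$ near $w$, and on the spherical part of $\ar(D\cap B(w,r'))$, where $v$ stays away from $\ar D$, the interior H\"older and Harnack estimates of Lemma~\ref{lem2.1} give $v(\cdot+te_n)\ge(1-c(t/r')^{\ti\si})v$; so the comparison principle yields $v(\cdot+te_n)\ge\la v$ on $D\cap B(w,r')$ with $\la=1-c(t/r')^{\ti\si}$, and dividing by $t$ and letting $t\downarrow 0$ gives $\lan\nabla v,e_n\ran\ge 0$.

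The quantitative lower bound $\lan\nabla v,e_n\ran\ge c_\star^{-1}v/d$, the ``fundamental inequality'', is the step I expect to be the main obstacle. For it I would argue by barriers: near $x$ inscribe in $D$ a truncated circular cone of aperture controlled by $\| \ph \hat \|$, with axis along $e_n$ and vertex at a nearby boundary point, compare $v$ from below with an explicit $\mathcal A$-subsolution supported in that cone, vanishing on its lateral boundary and with quantified radial derivative, and combine this with the monotonicity just established and the boundary Harnack inequalities recorded in this section; a symmetric comparison from above with an $\mathcal A$-supersolution in a slightly larger cone completes \eqref{3.6}$(a)$. The one new feature relative to the $p$-harmonic case is that $\mathcal A$ need not respect rotations, so the barriers have to be built directly out of the structure conditions \eqref{1.5} rather than from a radial ansatz; this is exactly what \cite[Section~10]{AGHLV} and \cite[Section~8]{ALSV} supply.

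Granting \eqref{3.6}$(a)$, I would obtain \eqref{3.6}$(b)$ from a Fatou theorem. By Euler's identity $\nabla^2 f(\eta)\eta=(p-1)\nabla f(\eta)$ and translation invariance, both $v$ and $\partial_n v$ are solutions of the uniformly elliptic divergence-form equation $\mathrm{div}\big(\nabla^2 f(\nabla v)\,\nabla w\big)=0$, whose coefficients are H\"older continuous by Lemma~\ref{lem2.2}; the fundamental inequality gives $\partial_n v>0$ and $|\partial_j v|\le c\,\partial_n v$ near $\ar D$, so each $\partial_j v+c'\partial_n v$ is also a positive solution, and a Fatou theorem for such operators in Lipschitz domains (as in \cite{LN}) yields a non-tangential limit of $\partial_n v$ and of each $\partial_j v+c'\partial_n v$, hence of $\nabla v$, at $\mathcal H^{n-1}$-almost every boundary point. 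Since $\ph$ is differentiable $\mathcal H^{n-1}$-a.e.\ (Rademacher), $\De(w,\ti r)$ has a tangent plane a.e.; because $v\equiv 0$ on the graph its tangential derivative vanishes there, and together with $|\nabla v|\ge c_\star^{-1}v/d>0$ this forces $\nabla v(y)=|\nabla v(y)|\,\mathbf n(y)$ for $\mathcal H^{n-1}$-a.e.\ $y\in\De(w,2\ti r)$, which is \eqref{3.7}.

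For \eqref{3.8} I would blow $v$ up at a boundary point $y$ where $\ph$ is differentiable and \eqref{3.6}$(b)$ holds: the rescalings $x\mapsto v(y+\rho x)/\rho$ are $\mathcal A$-harmonic on expanding sets and converge in $C^1_{\mathrm{loc}}$ (by Lemma~\ref{lem2.2}) to the linear function $\lan x,\nabla v(y)\ran=|\nabla v(y)|\lan x,\mathbf n(y)\ran$ on the half-space $\{\lan x,\mathbf n(y)\ran>0\}$, the associated measures in \eqref{3.5}$(a)$ converge accordingly, and the density of the limit measure --- computed from $\nabla f(|\nabla v|\,\mathbf n)=|\nabla v|^{p-1}\nabla f(\mathbf n)$ and $\lan\nabla f(\mathbf n),\mathbf n\ran=p\,f(\mathbf n)$ --- equals $p\,f(\nabla v(y))/|\nabla v(y)|$; by the Lebesgue differentiation theorem this is $d\nu/d\mathcal H^{n-1}(y)$ on $\De(w,2\ti r)$. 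Finally, \eqref{3.9} is a reverse H\"older estimate for the density $f(\nabla v)/|\nabla v|$, which by \eqref{1.5}$(b)$ is comparable to $|\nabla v|^{p-1}$; starting from the doubling property \eqref{3.5}$(b)$ of $\nu$ together with the pointwise bound $|\nabla v|\le c\,v/d$, I would run a Whitney-type decomposition of $D$ near $\ar D$ and a self-improvement argument to produce the exponent $q>p/(p-1)$ and the constant $c_{\star\star}$, in the manner of \cite{LN1}.
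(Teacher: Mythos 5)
Your plan matches the paper's treatment: the paper gives no self-contained proof of Lemma \ref{lem3.1} but imports it from \cite[Section 10]{AGHLV} (for $1<p<n$) and \cite[Section 8]{ALSV} (for $p\ge n$), which update the Lewis--Nystr\"om estimates of \cite{LN,LN1} exactly along the lines you sketch. The individual steps you outline --- the interior gradient bound for the upper estimate in \eqref{3.6}$(a)$, the translation/deformation argument for the fundamental inequality, the Fatou and blow-up arguments for \eqref{3.6}$(b)$, \eqref{3.7}, \eqref{3.8}, and the doubling-plus-self-improvement route to the reverse H\"older inequality \eqref{3.9} --- are the ones used in those references, so your proposal is consistent with the proof the paper relies on.
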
       
To prove   uniqueness  for $u_1$  in  Theorem \ref{thmA}  we need  the following  boundary Harnack inequality.           
    \begin{lemma} 
\label{lem3.2} 
  Let  $D, \hat r, w, \ph, p,  $  be as in  Lemma \ref{lem3.1}  and $ 0 < r  \leq \hat r. $   
  Also  let $ v_i$, for $i = 1, 2$  be positive $   \nabla f = \mathcal{A}$-harmonic functions in  $   D  \cap  B ( w, r )$  and  continuous 
  in  $  B ( w, r ) $  with   $ v_1 \equiv v_2 \equiv 0 $ on  $ B ( w, r )  \sem D. $     Then
  there exist  $  \be_+ \in (0, 1)$ and $c_{+}  \geq  1$, depending only on  the data and 
 $ \| \ph  \hat \|$,  such that if $r^+  =   r/c^+$ then   
\begin{align}   
\label{3.10} 
\left|   \frac{ v_1 ( x )}{ v_2 (x)}  -  \frac{ v_1 (y )}{ v_2 (y)} 
  \right|  \, \leq \,   c_+    \left( \frac{ | x - y | }{ r^+ }   \right)^{\be_+ } \, 
 \frac{ v_1 (x) }{v_2 ( x )} 
  \end{align}  
whenever $x, y   \in  D  \cap B (w, r^+)$.
 \end{lemma}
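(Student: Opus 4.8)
The plan is to establish the Carleson estimate for each $v_i$, then deduce a two-sided comparison $v_1 \approx v_2$ near $w$, and finally promote this to Hölder continuity of the ratio by an oscillation-decay (iteration) argument. First I would record the \emph{Carleson estimate}: there is $c$ depending only on the data and $\|\ph\hat\|$ such that for $\ze\in\De(w,2\ti r)$ and $0<s<\ti r$ (with $\ti r = r/c_\star$ from Lemma \ref{lem3.1}),
\[
\max_{D\cap B(\ze,s)} v_i \;\leq\; c\, v_i(a_s(\ze)),
\]
where $a_s(\ze)$ is the corkscrew point below \eqref{3.2}. This follows from the Hölder continuity up to the boundary \eqref{3.4} together with the interior Harnack inequality Lemma \ref{lem2.1}$(ii)$ applied along a Harnack chain in the cone $\Ga(\ze)$; it is exactly the argument ``often attributed to Carleson'' quoted in the proof of Lemma \ref{lem2.6}. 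Combining the Carleson estimate with the interior Harnack inequality and the boundary estimate \eqref{3.6}$(a)$ of Lemma \ref{lem3.1} (which gives $v_i(x)\approx |\nabla v_i(\cdot)|\, d(x,\ar D)$ near the boundary, hence controls boundary decay from both sides), one obtains: there is $r' = r/c$ and $c\geq 1$ such that
\[
c^{-1}\,\frac{v_1(a_{r'}(w))}{v_2(a_{r'}(w))}\;\leq\;\frac{v_1(x)}{v_2(x)}\;\leq\; c\,\frac{v_1(a_{r'}(w))}{v_2(a_{r'}(w))}\qquad\text{for all }x\in D\cap B(w,r').
\]
Thus, after normalizing so that $v_1(a_{r'}(w)) = v_2(a_{r'}(w))$, we have $c^{-1}\leq v_1/v_2\leq c$ on $D\cap B(w,r')$.

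Next I would run the standard iteration that upgrades boundedness of the ratio to Hölder continuity. Fix $x\in D\cap B(w,r^+)$ with $r^+ = r/c^+$ for a large constant $c^+$ to be chosen, and set $\de = d(x,\ar D)$; let $\hat\ze\in\ar D$ be a closest boundary point. For dyadic scales $s_k = 2^{-k}\de_0$ with $\de_0$ comparable to $r^+$, consider $M_k = \sup_{B(\hat\ze,s_k)\cap D} v_1/v_2$ and $m_k = \inf_{B(\hat\ze,s_k)\cap D} v_1/v_2$. The key geometric input is that $v_1 - m_k v_2$ and $M_k v_2 - v_1$ are both nonnegative $\mathcal{A}$-harmonic functions (\emph{here the linearity issue is handled exactly as in} \cite{LN,LN1}: one does not use linearity of the operator directly but rather applies the comparison/Carleson machinery to $v_1$ and $v_2$ separately and combines) vanishing on $B(\hat\ze,s_k)\setminus D$; applying the Carleson estimate and interior Harnack to these, together with \eqref{3.6}$(a)$, yields an oscillation decay
\[
M_{k+1} - m_{k+1} \;\leq\; (1-\eta)\,(M_k - m_k)
\]
for some $\eta = \eta(\text{data},\|\ph\hat\|)\in(0,1)$. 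Iterating gives $M_k - m_k \leq c\,2^{-k\be_+}\,(M_0-m_0)$ with $\be_+ = -\log_2(1-\eta)$, and since $M_0 - m_0 \leq c\, v_1(x)/v_2(x)$ by the already-established two-sided bound, one gets \eqref{3.10} for pairs $x,y$ with $|x-y|\lesssim d(x,\ar D)$. The remaining case — $|x-y|$ large compared with the distances to the boundary, in particular when $x$ or $y$ is close to $\ar D$ — is handled by the usual dichotomy: either both points lie deep inside $D$ (pure interior Hölder estimate for $\nabla v_i$, Lemma \ref{lem2.2}, and Harnack), or one chains through a corkscrew point $a_{|x-y|}(\hat\ze)$ at the appropriate scale and applies the boundary case twice.

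The main obstacle I expect is the oscillation-decay step: making precise that $M_{k+1}-m_{k+1}\leq(1-\eta)(M_k-m_k)$ requires comparing the \emph{nonnegative} $\mathcal{A}$-harmonic functions $v_1-m_kv_2$ and $M_kv_2-v_1$ on the half-ball $B(\hat\ze,s_k)\cap D$, and because the operator is nonlinear one cannot simply add these and must instead argue as in \cite{LN} by separately invoking the Carleson estimate and the interior Harnack inequality for each, using the fundamental boundary gradient estimate \eqref{3.6}$(a)$ to control both functions' behavior at the corkscrew point $a_{s_k}(\hat\ze)$ — the point being that at that corkscrew point one of $M_k v_2 - v_1$, $v_1 - m_k v_2$ is comparable to $v_2(a_{s_k}(\hat\ze))$ times a \emph{definite} fraction of $M_k - m_k$. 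Everything else (the Carleson estimate, the passage from local to the stated range of $x,y$, the choice of constants $c^+$, $r^+$) is routine once Lemmas \ref{lem2.1}, \ref{lem2.2}, \ref{lem2.5}, \ref{lem2.6}, and \ref{lem3.1} together with \eqref{3.3}--\eqref{3.9} are in hand, and this is precisely the content updated from \cite{LN,LN1} to the $\mathcal{A}$-harmonic setting in \cite[section 10]{AGHLV} and \cite[section 8]{ALSV}.
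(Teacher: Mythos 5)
The first half of your outline — the Carleson estimate, the comparison at the corkscrew point via \eqref{3.6}$(a)$ and Harnack chains, and the conclusion that $c^{-1}\leq v_1/v_2\leq c$ on $D\cap B(w,r')$ — is sound and is the standard opening move. The genuine gap is the oscillation-decay step, exactly the point you flag as the main obstacle but do not resolve. When the operator is nonlinear, $M_k v_2 - v_1$ and $v_1 - m_k v_2$ are \emph{not} $\mathcal{A}$-harmonic, so the Carleson estimate and interior Harnack inequality cannot be applied to them, and ``applying the machinery to $v_1$ and $v_2$ separately and combining'' does not yield $M_{k+1}-m_{k+1}\leq(1-\eta)(M_k-m_k)$. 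Knowing that one of the two differences is at least a definite fraction of $(M_k-m_k)\,v_2$ at the corkscrew point $a_{s_k}(\hat\zeta)$ is of no use unless that positivity propagates to all of $B(\hat\zeta,s_{k+1})\cap D$; propagation of positivity from an interior point is precisely a Harnack-type statement for the difference itself, which requires the difference to solve some PDE.

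The mechanism that closes this gap in \cite{LN,LN1} and in the $\mathcal{A}$-harmonic updates \cite{AGHLV,ALSV} — and which this paper sketches for the closely analogous inequality \eqref{3.21a} — is linearization. One writes $\zeta=a v_1-b v_2$ and observes, via the fundamental theorem of calculus applied to $\nabla f$, that $\zeta$ is a weak solution of a linear divergence-form equation $\sum_{i,j}\partial_{x_i}(b_{ij}\zeta_{x_j})=0$ with $b_{ij}(x)=\int_0^1 f_{\eta_i\eta_j}\bigl(t\,a\nabla v_1(x)+(1-t)\,b\nabla v_2(x)\bigr)\,dt$, whose ellipticity degenerates like the weight $\chi=(a|\nabla v_1|+b|\nabla v_2|)^{p-2}$. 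The fundamental inequality \eqref{3.6}$(a)$ together with Lemmas \ref{lem2.1} and \ref{lem2.2} shows that $\chi$ is an $A_2$ weight with controlled constant, and the boundary Harnack principle for linear degenerate elliptic equations with $A_2$ degeneracy then delivers the H\"older continuity of $v_1/v_2$. Your outline omits this linearization entirely, and it is the heart of the proof; what you wrote correctly establishes only the preparatory two-sided comparability of the ratio. (Note also that the paper itself does not reprove Lemma \ref{lem3.2}; it quotes it from \cite{LN,LN1} as updated in \cite{AGHLV,ALSV}.)
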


\subsection{Uniqueness  in Theorem  \ref{thmA}  for  $ 0 < \al  < \pi $}  
To prove uniqueness for  $ u_1 $  when   $ p,  1 <  p < \infty,  $  and  $ \al  \in ( 0, \pi ) $  are fixed,  suppose  $  \hat u  > 0 $ in $ K ( \al ) $ and is  also $ \mathcal{A}$-harmonic  as well as continuous in  
$ \rn{n}$ with  $ \hat u  \equiv 0 $  on $ \rn{n} \sem K ( \al ) $ and $ \hat u ( e_1 ) = 1. $  Using  Lemma 
\ref{lem3.2}  with $ D =  B ( 0, R)  \cap  K  ( \al ),     v_1 = u_1,  v_2  = \hat u,  $  and $ w = 0, \hat  r  =  R/ 2,   $  we find that    
\begin{align}
 \label{3.11} 
\left|   \frac{ u_1 ( x )}{ \hat u (x)}  -  \frac{ u_1 (y )}{ \hat u (y)}   \right|  \, \leq \,   c_+    \left( \frac{ | x - y | }{ R }   \right)^{\be_+ } \,  \frac{u_1 (x)}{\hat u (x)} 
  \end{align}
in   $  B ( 0, \frac{R}{2 c_+}) $  for some $ c_+$ and $\be_+$ depending only on the data and  the Lipschitz constant of $\partial K(\alpha)$.  Fixing $ x, y, $  and letting  $  R \to \infty $  it follows that  $ u_1 =  \hat u. $  To show 
that   $u_1$ has the form \eqref{1.2}    observe    that  for  fixed $  t > 0,  $  the  function $ x  \mapsto  u_1 ( t  x )$ for  $x  \in  K ( \al  )$   is positive,  $\mathcal{A}$-harmonic,  and  has  boundary value 0  on  $  \ar  K  (\al ), $  so by uniqueness of   $ u_1,  $  we have   
\begin{align}   
\label{3.12}  u_1  (t x ) =  u_1 ( t e_1 )  u_1 ( x ) \quad \mbox{whenever}\, \, x \in K ( \al ).  
\end{align}  
Differentiating  \eqref{3.12} with respect to $ t $ (permissible by  Lemma  \ref{lem2.2})   and evaluating at $ t =  1 $  we see that   
      \[ 
        \lan x,  \nabla u_1 ( x ) \ran =   \lan e_1,  \nabla u_1 ( e_1 ) \ran  u_1 ( x )\quad \mbox{whenever} \quad x  \in K (\al). 
        \]    
If we put $ \rho =  | x |,   x / | x | = \om \in    \mathbb{S}^{n-1}, $   in this identity  we obtain that  
\[  \rho \,   (u_1)_{\rho}  ( \rho \om ) =  \lan e_1,  \nabla u_1 ( e_1 )\ran u_1 ( \rho  \om ).  
\]
Dividing this equality by  $ \rho  u_1  ( \rho  \om ) , $  integrating with respect to $ \rho, $  and  exponentiating,   we find  that   
        $  u_1 (  r \om )  =  r^{\la_1 } u_1 ( \om ) $  whenever $ \om \in \mathbb{S}^{n-1}$  where  
$ \la_1  =  \lan e_1,  \nabla u_1  ( e_1 ) \ran$.  

To prove    uniqueness  for $u_2 $   in $ K ( \al )$   with   $   p$ and $\al $  fixed with  $0 < \al < \pi,  1  < p <   \infty, $ we let   $ 0 < \hat  u  $   be $  \mathcal{A}$-harmonic in 
$ K   ( \al) $   with  continuous boundary value 0 on  $  \ar K ( \al )  \sem \{0\}, \hat u (e_1) = 1,  $   and 
\begin{align}
  \label  {3.13}  \lim_{|x| \to \infty}  \hat u ( x ) = 0. 
  \end{align}
From   Lemma   \ref{lem3.2} 
 we  see  that if   $   w \in  \ar K ( \al ) \sem \{0\}$,  $r  = |w|/4$,  $v_1  \not = v_2$ with $v_2 = \hat u$ or $v_2= u_2$,   
 then   \eqref{3.10} in  Lemma \ref{lem3.2} is valid for both $\hat{u}/u_2$ and $u_2/\hat{u}$.  Now  \eqref{3.10} for $ \hat u, u_2$,  \eqref{3.13}, \eqref{2.16} for $u_2$,    Harnack's  inequality  and   the maximum principle  for  $ \mathcal{A}$-harmonic functions  yield  that  
\begin{align} 
 \label{3.14} 
 c^{-1}  \leq  \frac{ u_2 (x)}{ \hat u ( x ) } \leq   c \quad  \mbox{for}\, \, x\in K ( \al )
 \end{align} 
 where $ c \geq 1 $  depends only on the data.  Indeed, if for example   
\[  
\liminf_{\substack{x\to 0 \\ x \in K (\al )}}   \frac{ u_2 (x)}{ \hat u ( x ) } =  0 
\] 
then  the  above  program first  gives   $ u_2(x)/\hat u (x)  \to  0 $  as $ x \to 0 $ in $ K ( \al ) $  and second that  $ u_2 \equiv 0, $   clearly  a contradiction.    

 Now \eqref{3.14}, \eqref{3.6} $(a)$  for $ \hat u$ and $u_2$  when  $ w  \in  \ar K (\al) \sem \{0\}$ and $r = |w|/4$,  and \eqref{2.17} imply that  there exist  $ c_*  \geq  1$ and $\hat  \be \in ( 0, 1)$,  depending only on  the data and $\al$,    such that      
\begin{align}
  \label{3.15} 
   \left|   \frac{u' (x)}{u'' (x)}  -  \frac{u' (y)}{u'' (y)}  \right| \, \leq \,    c_*  \,  \frac{u' (x)}{u'' (x)}  \left( \frac{\rho}{ \min \{ | x |, | y | \}}  \right)^{\hat \be} \quad \mbox{for}\,\,  x,  y  \in  \rn{n}  \sem  B ( 0,   c_* \,  \rho )
\end{align}  
whenever $ 0 < \rho < 1/c_*  $   and  $ u'  \not = u''  \in  \{ \hat u,  u_2 \}. $   Fixing 
$ x, y, $   and  letting $ \rho  \to 0 $   we conclude that   $ \hat u = u_2$.
The proof  of   \eqref{3.15}  is  quite similar to the proof of  \eqref{3.10} (given the above assumptions) only arguments are made  in   $ \rn{n} \sem  B (0,\rho) $  rather  than  $  B (0, r). $    For the  proof  of  a  somewhat stronger inequality than \eqref{3.15}  when  $ \hat u$ and $u_2 $  are  $p$-harmonic functions,  see  the proof of Theorem 3 and Corollary 5.25  in 
  \cite{LN1}.  The proof  of  \eqref{3.15}  when $ \hat u$ and $u_2 $   are  $ \mathcal{A}$-harmonic is  essentially unchanged,  so we omit the details.  
Homogeneity of  $ u_2, $ i.e.,  \eqref{1.1},  assuming   uniqueness,  is proved in the same way as for $u_1$ when   $ \al \in  ( 0,  \pi ). $

\subsection{Existence and uniqueness in  Theorem \ref{thmA} for $ \al = \pi$}

It  remains  to  show existence and  uniqueness in  Theorem  \ref{thmA} when $ \al  =  \pi $ and $ p  >  n - 1$.     To do this, for $i=1,2,$ we  temporarily  write  
\[    
u_i  ( t x ,  \al ) =   t^{\la_i (\al)}   u_i   ( x,  \al) \quad  \mbox{for}\, \, x \in K ( \al)\, \, \mbox{and}\, \,  \al \in (0, \pi)
\]   
for the  functions  in  Theorem \ref{thmA}  corresponding  to  $  K  ( \al ). $  
 From  the  maximum principle for  $\mathcal{A}$-harmonic functions it  follows  that  if $ 0 < \al_1  < \al_2 <  \pi, $  then  
$ u_1  ( \cdot, \al_1 )  \leq  \bar c  \,    u_1  ( \cdot, \al_2) $ in   $ K ( \al_1 )  \cap  B ( 0, 1) $
 so necessarily 
 \[
 0  < \la_1 ( \al_2)  \leq \la_1 ( \al_1). 
 \] 
 Also strict inequality must hold since otherwise from  \eqref{1.1}   it would follow that 
$  u_1  ( \cdot, \al_1)/u_1 ( \cdot, \al_2 ) $ has an absolute maximum in  $ K ( \al_1 )$   which  again  leads to a contradiction by way of  the maximum principle for  $ \mathcal{A}$-harmonic functions.      
Similarly ,  if $ 0 < \al_1  < \al_2 <  \pi, $  then  
$ u_2  ( \cdot, \al_1 )  \leq  \bar c   \,  u_2  ( \cdot, \al_2) $ in   $ K ( \al_1 )  \sem   B ( 0, 1) $ and   $  \la_2 ( \al )  <  0  $    for   $  \al \in (0, \pi ), $   thanks  to    \eqref{2.16} for 
$ u_2  ( \cdot, \al ). $   
Thus  
\[  
 0  <  - \la_2 (\al_2 ) \leq - \la_2 ( \al_1).   
  \] 
Moreover,    strict inequality holds in  this  equation since  otherwise we could get  a contradiction by the same argument as above.    
We  conclude from our considerations for $ i = 1, 2,$ that
\begin{align}  
\label{3.16} 
 |\la_i  ( \al ) | \mbox{ is decreasing on }  (0, \pi ).   
\end{align}
For $ i  = 1, 2$, let
\[ 
\la_i  ( \pi )  =  \lim_{\al \to \pi} \la_i (\al). 
\]    
 We note that  if      $   \al \in (0, \pi]$ and $n-1 <   p  < \infty$   then  Lemmas   \ref{lem2.1}, \ref{lem2.2}, \ref{lem2.5}, and   \eqref{2.8}   
 are valid for  $ u_1  $  in $ K (\al) \cap  B ( 0, \rho) $ with constants depending  only on the data as  follows from   
     uniform  $ (\rho, p ) $-fatness of  $ (\rn{n} \sem K (\al)) \cap B (0, \rho) $   when 
$  n - 1 <  p < \infty. $  Using these facts  and Ascoli's  theorem  we find  that  as $ m   \to  \infty$, 
a  subsequence of  $ \{ u_1  ( \cdot,  \pi - 1/m  ) \}, $     converges
uniformly on  compact subsets of   $  \rn{n}  $   to  $ u_1 ( \cdot, \pi ), $ 
a  H\"{o}lder  continuous function on  $ \rn{n}  $ which is   $ \mathcal{A}$-harmonic  in   $ K ( \pi )$   
with $ u_1  \equiv 0 $ on  $ \ar K ( \pi ) .$ 
   Similarly,    Lemmas   \ref{lem2.1}, \ref{lem2.2}, \ref{lem2.5},   \eqref{2.8}, \eqref{2.17}, 
   and  \eqref{2.16} (with $v_l$ replaced by $u_2$)     are valid for  
   $ u_2 ( \cdot, \al)  $  in $ K (\al) \cap  B ( w, \rho) $ whenever  $ w  \in  \ar  K ( \al ) \sem \{0\}$ and $\rho  < |w|/4. $  
   All constants depend  only on the data for   $ n - 1 < p < \infty. $  Using these facts  as above,  
   we  obtain  $ u_2 ( \cdot,  \pi ),  $  a  uniform limit on compact subsets of $ \rn{n} \sem \{0\}, $ of  a  subsequence  of   $ 
( u_2 ( \cdot,  \pi -   1/m )) $ as $ m  \to \infty.$     Also  $ u_2 ( \cdot, \pi ) $  is $ \mathcal{A}$-harmonic
   in  $ K ( \al) \sem \{0\}$  and locally H\"{o}lder  continuous  on  
$ \rn{n} \sem \{0\} $  .   Moreover,   \eqref{2.16}, \eqref{2.17}  hold   with  $ v_l, u_2, $ replaced  by
     $ u_2 ( \cdot, \pi ). $    From  \eqref{1.1} for $ \al \in ( 0,  \pi ) $  and   \eqref{3.16}  we  deduce for   $ i  = 
1, 2, $  that 
\begin{align} 
\label{3.17}  
u_i  (t x,  \pi  )  =   t^{\la_i ( \pi ) }  u_i  ( x, \pi ) \quad \mbox{whenever}\,\, x \in  \rn{n} \sem \{0\}. 
\end{align}
 
To  prove  uniqueness of $ u_i ( \cdot, \pi )$ for $i   = 1,  2,$ we  need  several Lemmas  
  analogous  to   Lemmas  \ref{lem3.1} and \ref{lem3.2}  for  Lipschitz domains.         
\begin{lemma}  
\label{lem3.3} 
 Fix  $ p$ with  $n - 1 < p  <  \infty$,  $n  > 2$, $t   > 0$, and let $   \breve I  $ be  the line segment with endpoints  $  - 3t e_1/2$ and $-  t e_1/2 $. 
 Let  $ 0<  v  $  be $ \mathcal{A} =  \nabla f$-harmonic  in  $ B ( - t e_1, t/2 ) \sem   \breve I $ 
with continuous boundary value 0  on  $ \breve I .$     Then there  exists $ c \geq 4, $  depending only on the data,  such that  
\begin{align}    
\label{3.18}    
c^{-1}    \, \frac{v (x)}{d(x, \breve I)}   \leq       | \nabla v (x) | \,  \leq c \, \frac{v (x)}{d(x, \breve I)} 
\end{align}
 for  $ x \in  B ( - t e_1,  t/c ) \sem \breve I. $  
 \end{lemma}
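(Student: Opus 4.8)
\emph{Reduction and the easy half.} The plan is to first rescale to a normalized configuration and then prove the two inequalities separately. Since $f$ is homogeneous of degree $p$ by \eqref{1.5}(a), $\mathcal{A}=\nabla f$ is homogeneous of degree $p-1$, so $y\mapsto v(ty-te_1)$ is $\mathcal{A}$-harmonic whenever $v$ is; as \eqref{3.18} is invariant under this dilation I may assume $t=1$, so that $\breve I=[-e_1/2,e_1/2]$ (hence $0\in\breve I$), $v>0$ is $\mathcal{A}$-harmonic and continuous in $B(0,1/2)\setminus\breve I$ with $v\equiv0$ on $\breve I$, and the claim is $c^{-1}v(x)/d(x,\breve I)\le|\nabla v(x)|\le c\,v(x)/d(x,\breve I)$ on $B(0,1/c)\setminus\breve I$. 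Because $p>n-1$ a segment has positive $p$-capacity ($1>n-p$), so by Remark \ref{rmk2.7} the set $\mathbb{R}^n\setminus\breve I$ is uniformly $(\rho,p)$-fat for all $\rho>0$ with constant depending only on $p,n$, the corkscrew and Harnack-chain conditions of Lemma \ref{lem2.6} hold for $B(z,2\rho)\setminus\breve I$, and hence Lemmas \ref{lem2.5}, \ref{lem2.6} and \eqref{2.8} apply to $v$ (extended by $0$ on $\breve I$) near interior points of $\breve I$. The upper bound is then immediate: for $x\in B(0,1/c)\setminus\breve I$ with $\delta:=d(x,\breve I)$ one has $\delta\le|x|<1/c$, so $v$ is a positive $\mathcal{A}$-harmonic function in $B(x,\delta)\subset B(0,2/c)$, and Lemma \ref{lem2.2}$(\hat a)$ with $w=x$, $4r=\delta$ gives $|\nabla v(x)|\le\max_{B(x,\delta/4)}|\nabla v|\le c\,\delta^{-1}v(x)$.

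\emph{The lower bound: the core of the proof.} For the lower bound I would compare $v$ with a reference function near $\breve I$. Fix $x$, $\delta$ as above, let $z_0\in\breve I$ be the nearest point (an interior point, since the endpoints of $\breve I$ lie at distance $\ge1/2-1/c$ from $x$), so that on $B(z_0,\delta)$ the distance to $\breve I$ agrees with the distance to the line $z_0+\mathbb{R}e_1$ and $x-z_0=\delta\sigma_0$ with $\sigma_0\perp e_1$. The reference is the homogeneous $\mathcal{A}$-harmonic function $W_0$ of the line complement vanishing on $z_0+\mathbb{R}e_1$: when $f(\eta)=p^{-1}|\eta|^p$ a direct computation shows that $W_0(y)=|(y-z_0)'|^{\beta_0}$ with $\beta_0=\tfrac{p+1-n}{p-1}>0$ is $p$-harmonic off $z_0+\mathbb{R}e_1$ and $|\nabla W_0|\equiv\beta_0\,W_0/d(\cdot,\breve I)$, and for general $f$ as in \eqref{1.5} I would use the analogous homogeneous $\mathcal{A}$-harmonic $W_0$ (homogeneous of some positive degree depending on the data and $n$), which enjoys the same two properties. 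The lower dimensional boundary Harnack inequality of \cite{LN2} (cf. \cite{LLN}) then provides a fixed scale $R=R(\mathrm{data})\le1/4$ on which $v/W_0$ is bounded above and below and H\"older continuous up to $\breve I$, with norm controlled by $(v/W_0)(x)R^{-\alpha}$. Writing $v=(v/W_0)\,W_0$, using $|\nabla W_0(x)|\approx W_0(x)/\delta$, and bounding $|\nabla(v/W_0)(x)|\lesssim\delta^{-1}\operatorname{osc}_{B(x,\delta/2)}(v/W_0)\lesssim\delta^{\alpha-1}R^{-\alpha}(v/W_0)(x)$ by the interior (Schauder) estimate for the quotient, I get
\[
|\nabla v(x)|\ \ge\ (v/W_0)(x)\,|\nabla W_0(x)|-W_0(x)\,|\nabla(v/W_0)(x)|\ \ge\ \Big(c^{-1}-c(\delta/R)^{\alpha}\Big)\frac{v(x)}{\delta}.
\]
Choosing the constant in \eqref{3.18} large enough (depending only on the data) makes the right side $\ge c^{-1}v(x)/\delta$ whenever $x\in B(0,1/c)\setminus\breve I$, since then $\delta\le|x|<1/c$.

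\emph{The hard part.} The genuinely difficult ingredients are (i) the existence of the homogeneous reference $W_0$ with non-degenerate gradient for a general operator $\mathcal{A}$ as in \eqref{1.5}, and (ii) the boundary Harnack inequality for $v/W_0$ up to $\breve I$; both are substantially harder than their Lipschitz, codimension-one analogues in Lemmas \ref{lem3.1}, \ref{lem3.2}, precisely because $\mathbb{R}^n\setminus\breve I$ is not a Lipschitz domain, and it is for these that one invokes \cite{LN2}, \cite{LLN}. One cannot dispense with them: the crude substitute — sandwiching $v$ between the $\mathcal{A}$-sub- and supersolutions $|(y-z_0)'|^{\beta^{\pm}}$ with $\beta^-<\beta_0<\beta^+$ built from the ellipticity bounds \eqref{1.5}(b) — does not pin $|\nabla v|$ at the single point $x$, the obstruction being that for $\mathcal{A}$-harmonic PDE $|\nabla v|$ obeys no interior Harnack inequality.
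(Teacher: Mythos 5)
The paper does not actually prove this lemma: its entire ``proof'' is the citation ``See Lemma 7.1 in \cite{LN2}.'' So you are not competing against an internal argument, and your reconstruction does use exactly the toolkit that the paper imports from \cite{LN2} elsewhere: the homogeneous barrier $w\approx |x-x_1e_1|^{\theta}$, $\theta=\frac{p+1-n}{p-1}$, of \eqref{4.2} (= Lemma 5.3 of \cite{LN2}) and the segment boundary Harnack inequality of Lemma \ref{lem3.4} (= Lemma 6.2 of \cite{LN2}). Your normalization, the verification that $p>n-1$ makes $\breve I$ of positive $p$-capacity so the machinery of Lemmas \ref{lem2.5}--\ref{lem2.6} applies, and the upper bound via Lemma \ref{lem2.2}$(\hat a)$ are all correct. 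The claim $|\nabla W_0|\geq \theta W_0/d$ for the reference function, which you assert without proof, does follow once one knows $W_0$ can be taken translation-invariant in $x_1$ and homogeneous of degree $\theta$ in $x-x_1e_1$ (Euler's identity applied to the perpendicular dilation gives $\langle \nabla W_0(x), x-x_1e_1\rangle=\theta W_0(x)$); that normalization is part of the construction in \cite{LN2} but should be stated.

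The one genuine gap is the step ``$|\nabla(v/W_0)(x)|\lesssim \delta^{-1}\operatorname{osc}_{B(x,\delta/2)}(v/W_0)$ by the interior (Schauder) estimate for the quotient.'' The quotient of two $\mathcal{A}$-harmonic functions satisfies no elliptic equation, so there is no such estimate to invoke; and the naive product-rule bound only yields $|\nabla(v/W_0)(x)|\lesssim \delta^{-1}(v/W_0)(x)$, which destroys the cancellation you need. The standard repair is the linearization the paper itself sketches after \eqref{3.21a}: $\zeta=v-q(x)W_0$, $q=v/W_0$, solves \eqref{3.22}--\eqref{3.24}, and one must then check the coefficients $b_{ij}$ are uniformly elliptic and sufficiently regular on $B(x,c^{-1}\delta)$ before a pointwise gradient bound for $\zeta$ is available --- ellipticity holds here only because $|\nabla W_0|\gtrsim W_0(x)/\delta$ forces $\int_0^1|s\nabla v+(1-s)q(x)\nabla W_0|^{p-2}ds\approx (v(x)/\delta)^{p-2}$, and the regularity of $b_{ij}$ for $p\neq 2$ is itself nontrivial. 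Alternatively (and more in the spirit of what you already have), you can avoid linearization entirely: Taylor-expand $\zeta$ at $x$ using the quantitative $C^{1,\tilde\beta}$ bounds of Lemma \ref{lem2.2} for $\nabla v$ and $\nabla W_0$, so that for $0<\rho\le\delta/2$ and $e=\nabla\zeta(x)/|\nabla\zeta(x)|$,
\begin{align*}
\rho\,|\nabla \zeta(x)| \;\le\; |\zeta(x+\rho e)| + C\rho\Bigl(\tfrac{\rho}{\delta}\Bigr)^{\tilde\beta}\tfrac{v(x)}{\delta}
\;\le\; C\,W_0(x)\,q(x)\Bigl(\tfrac{\rho}{t}\Bigr)^{\beta_*} + C\rho\Bigl(\tfrac{\rho}{\delta}\Bigr)^{\tilde\beta}\tfrac{v(x)}{\delta},
\end{align*}
and then choose $\rho=\delta(\delta/t)^{\gamma}$ with $\gamma>0$ small to get $|\nabla\zeta(x)|\le C(\delta/t)^{\epsilon}\,v(x)/\delta$ for some $\epsilon=\epsilon(\mathrm{data})>0$; combined with $q(x)|\nabla W_0(x)|\ge\theta v(x)/\delta$ this gives the lower bound for $\delta/t$ small, i.e.\ for $x\in B(-te_1,t/c)$ with $c$ large. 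Either way the gap is fillable with ingredients already in the paper, but as written your key inequality rests on an estimate that does not exist.
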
 
   \begin{proof} 
   See  Lemma 7.1  in  \cite{LN2}. 
    \end{proof} 

\begin{lemma}   
\label{lem3.4}   
Let   $ p,  n, f, t, \breve I,  $  be as in Lemma  \ref{lem3.3}. For fixed  $  \rho,  0 < \rho  < t/ 2,  $ let $ 0 < v_i, \, i = 1, 2, $ be  $ \mathcal{A} =  \nabla f$-harmonic in  $ B ( - t e_1, \rho) \sem \breve I. $ There exist
$ c_*   \geq 1$ and $\be_*  \in ( 0, 1 ), $ depending only on the data, such that
\begin{align}
\label{3.19}   \,  \left| \frac{v_1 (x)}{v_2 (x)}  -    \frac{v_1 (y)}{v_2 (y)}   \right|   \,  \leq \,   c_*  \,  \frac{v_1 (x)}{v_2 (x)} \,  \,  \left(  \frac{ | x - y|}{\rho} \right)^{\be_*} 
\end{align}
whenever       $  x, y   \in  B ( - te_1,  \rho/c_*) \sem  \breve I.  $ 
\end{lemma}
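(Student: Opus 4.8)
The plan is to follow the boundary Harnack scheme for low-dimensional sets developed in \cite{LN2}, adapted to $\mathcal{A}$-harmonic functions, using the gradient non-degeneracy estimate of Lemma~\ref{lem3.3} as the essential extra input beyond the general theory of Section~\ref{section2}. By translation and dilation invariance we may normalize $t$, and throughout (as for $v$ in Lemma~\ref{lem3.3}) each $v_i$ is taken to have continuous boundary value $0$ on $\breve I$. First I would record the geometry. Since $p > n-1$ and $\mathcal{H}^1(\breve I) > 0$, the segment $\breve I$ is uniformly $(r_0,p)$-fat by the case $k = 1$ of Remark~\ref{rmk2.7}; and since $\rho < t/2$, for every $w \in \breve I \cap B(-te_1,\rho/4)$ the set $\breve I$ meets neither endpoint inside $B(w,\rho/4)$, so there it is simply a line segment through $w$, and $B(w,\rho/4)\setminus\breve I$ admits interior corkscrew points $a_s(w)$ at all scales $s$ together with the Harnack chain condition \eqref{2.7a}, with constants depending only on $n$. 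Hence Lemmas~\ref{lem2.5} and \ref{lem2.6} apply to each $v_i$ on such balls: in particular \eqref{2.8} holds, giving
\[
v_i(a_s(w)) \ \approx\ \max_{B(w,s)} v_i \ \approx\ \bigl[\, s^{p-n}\,\nu_i(B(w,s/2))\,\bigr]^{1/(p-1)},\qquad w \in \breve I,\ \ 0 < s < \rho/c,
\]
where $\nu_i$ is the measure of Lemma~\ref{lem2.6} and $\nu_i$ is doubling; and Lemma~\ref{lem3.3}, applied on each such ball after rescaling, yields $|\nabla v_i(x)| \approx v_i(x)/d(x,\breve I)$.

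Next I would reduce to an interior oscillation estimate for the quotient. As in \cite{LN2}, combining the Carleson estimate \eqref{2.8}, the maximum principle for $\mathcal{A}$-harmonic functions, Harnack's inequality (Lemma~\ref{lem2.1}(ii)) along chains, and the non-degeneracy $|\nabla v_i| \approx v_i/d(\cdot,\breve I)$ to control the vanishing rate of $v_i$ at $\breve I$, one obtains the weak comparison
\[
c^{-1}\,\frac{v_1(a_s(w))}{v_2(a_s(w))}\ \leq\ \frac{v_1(x)}{v_2(x)}\ \leq\ c\,\frac{v_1(a_s(w))}{v_2(a_s(w))}\qquad\text{for }x \in B(w,s)\setminus\breve I,
\]
valid for $w \in \breve I \cap B(-te_1,\rho/4)$ and $0 < s < \rho/c$, with $c = c(\text{data})$. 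In particular the oscillation of $v_1/v_2$ over $B(w,s)\setminus\breve I$ is at most $c^2$ times its infimum; replacing $v_1$ by a suitable constant multiple (so that $v_1(a_r(w)) = v_2(a_r(w))$) we may therefore assume $c^{-1} \leq v_1/v_2 \leq c$ on $B(w,r)\setminus\breve I$ for some fixed $r \approx \rho$, so that $|\nabla v_1| \approx |\nabla v_2| \approx v_2/d(\cdot,\breve I)$ there.

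The heart of the matter is the oscillation–decay step. Fix $w \in \breve I \cap B(-te_1,\rho/4)$ and for $0 < \sigma \leq r$ put $M(\sigma) = \sup_{B(w,\sigma)\setminus\breve I} v_1/v_2$ and $m(\sigma) = \inf_{B(w,\sigma)\setminus\breve I} v_1/v_2$; it suffices to prove $M(\theta\sigma) - m(\theta\sigma) \leq (1-\eta)\,(M(\sigma)-m(\sigma))$ for some $\theta,\eta \in (0,1)$ depending only on the data, since \eqref{3.19} then follows by iteration over dyadic scales. Because a constant multiple of an $\mathcal{A}$-harmonic function is again $\mathcal{A}$-harmonic, the nonnegative functions $h_1 := M(\sigma)\,v_2 - v_1$ and $h_2 := v_1 - m(\sigma)\,v_2$ are differences of $\mathcal{A}$-harmonic functions, so by the fundamental theorem of calculus each $h_i$ solves a linear divergence-form equation $\nabla\cdot(B_i\nabla h_i) = 0$, where $B_i(x) = \int_0^1 D\mathcal{A}(\cdots)\,d\tau$ interpolates between the gradients of the two $\mathcal{A}$-harmonic functions involved; by \eqref{1.5}(b), the weak comparison above, and $|\nabla v_i| \approx v_2/d(\cdot,\breve I)$, the matrix $B_i$ is elliptic with both bounds comparable to $\omega(x) := (v_2(x)/d(x,\breve I))^{p-2} \approx |\nabla v_2(x)|^{p-2}$, and using \eqref{2.8}, the doubling of $\nu_2$, and Lemma~\ref{lem3.3} once more one checks that $\omega$ is comparable on $B(w,r)$ to a Muckenhoupt $A_2$ weight. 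Since one of $h_1(a_r(w)), h_2(a_r(w))$, say $h_1(a_r(w))$, is at least $\tfrac12(M(r)-m(r))\,v_2(a_r(w))$, applying to the nonnegative solution $h_1$ the interior Harnack inequality and Carleson estimate for the degenerate operator $\nabla\cdot(B_1\nabla\cdot)$ — available on $B(w,r)\setminus\breve I$ by the De Giorgi--Nash--Moser theory for $A_2$-degenerate equations used in \cite{LN2}, together with the $p$-fatness and Harnack-chain properties of $\breve I$ recorded above — gives $h_1 \geq \eta\,(M(r)-m(r))\,v_2$ on $B(w,\theta r)\setminus\breve I$, i.e. $M(\theta r) \leq M(r) - \eta(M(r)-m(r))$; combined with $m(\theta r) \geq m(r)$ this is the desired decay.

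I expect the main obstacle to be exactly this last step: one must verify that the linearized coefficient matrices define genuinely $A_2$-degenerate elliptic operators and that the interior and boundary Harnack theory for such operators is available on the slit domain, with estimates at $\breve I$ that stay uniform even though the weight $\omega \approx |\nabla v_2|^{p-2}$ itself degenerates or blows up along $\breve I$ (according as $p < 2$ or $p > 2$). This is precisely the low-dimensional boundary Harnack machinery of \cite{LN2}, and is where essentially all the work lies; the hypotheses $p > n-1$ and $n > 2$ enter here and in the geometric setup only to guarantee that $\breve I$ is uniformly $p$-fat, so that Lemmas~\ref{lem2.5}, \ref{lem2.6}, \ref{lem3.3} and this machinery apply. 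Everything else — the geometry of $B(-te_1,\rho)\setminus\breve I$, the weak comparison, and the linearization — is routine given those inputs, and indeed \eqref{3.19} is established in this way in \cite{LN2}.
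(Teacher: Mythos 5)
Your proposal is correct and takes essentially the same route as the paper, whose entire proof of Lemma \ref{lem3.4} is the citation ``See Lemma 6.2 in \cite{LN2}.'' Your outline — uniform $(r_0,p)$-fatness of $\breve I$ from $p>n-1$, the Carleson/weak-comparison step using Lemma \ref{lem3.3}, and the oscillation decay via linearization to an $A_2$-degenerate divergence-form operator — is a faithful reconstruction of that argument, and it matches the strategy the authors themselves sketch for the analogous inequality \eqref{3.21a} (the weight $\chi=(a|\nabla v_1|+b|\nabla v_2|)^{p-2}$ and the equation \eqref{3.22}--\eqref{3.24}).
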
     

\begin{proof} 
See  Lemma 6.2 in \cite{LN2}. 
\end{proof}

\begin{proof}[Proof of uniqueness of $u_1 ( \cdot,  \pi) $] 
We  now prove uniqueness of $u_1 ( \cdot,  \pi) $  when  $ p  >   n - 1 $  and  $ n  \geq 2. $        
Suppose  $ 0 \leq \hat u $ is also $ \mathcal{A}$-harmonic in $ K ( \pi )$ with  continuous 
boundary value 0  on  $ \ar K ( \pi  ) $ and $ \hat u ( e_1 ) = 1.  $      Then  from  \eqref{3.19}, 
Harnack's inequality, and the maximum principle for $ \mathcal{A}$-harmonic functions  we deduce for $ n \geq 3  $  as  in \eqref{3.14}   that 
\begin{align}
 \label{3.20}   
 \ti c^{-1}  \leq  \frac{u_1 (x, \pi )}{\hat u (x)}  \leq  \ti c\quad    \mbox{whenever}\, \,  x \in \rn{n} \sem \ar  K ( \pi )
 \end{align}
where $ \ti c $  depends only on the data.    To prove \eqref{3.20}  for $ n = 2 $  we note that both components  of    $ B ( - t e_1, \rho )\sem  \breve I $  are Lipschitz domains so we can use the boundary  Harnack  inequality  for Lipschitz domains (Lemma  \ref{lem3.2})  to  estimate  the  ratio of  $ u_1 ( \cdot, \pi) / \hat u $ in $  B ( - t e_1, t/c ) \sem \breve I.$    Doing this and using  Harnack's  inequality,  the maximum principle for $ \mathcal{A}$-harmonic functions, once again,  it follows that Lemma  \ref{lem3.4}  and  \eqref{3.20}  are also  valid when $ n = 2. $  
 Next  observe from homogeneity of  $ u_1,  u_1 \geq 0,  $  and Lemmas \ref{2.1}, \ref{2.2},  that     
given $0 <  \de < \pi $ there exists $ c( \de )  \geq 1 $, depending only on the data and $ \de $,  such that 
\begin{align}
\label{3.21} 
c (\de)^{-1}   \frac{u_1(x, \pi)}{ d ( x,  \ar K ( \pi ))}  \leq     |   \nabla u_1 ( x, \pi  ) |  \leq  c (\de)   \frac{u_1(x, \pi)}{ d ( x,  \ar K ( \pi ))}
\end{align}
for $ x \in K ( \pi - \de)$.     Using    Lemma  \ref{3.3} for $ u_1 ( \cdot, \pi)$  when $ n  > 2$   and \eqref{3.1} $(a)$ on  both sides of  $ \ar K (\pi) $  when $ n = 2, $  we deduce  for fixed  $ \de =  \de_0 $ near enough $\pi$  that    \eqref{3.21} is  valid  when    $ x  \in   K ( \pi )   $ for some $ c (\de_0), $ depending only on the data.    
Finally  \eqref{3.20}, \eqref{3.21}, and    Lemmas    \ref{lem3.3}, \ref{lem3.4},  can be used for  $ n \geq 3 $  as in  \cite[subsection 4.2,  Assumption 1]{LN2}  and for $ n = 2 $  as in \cite{LLN}  to show first  that 
  \eqref{3.21} with $ u_1 $ replaced by $ \hat u $  holds  when  $ x \in  K ( \pi ) $ for  some $0<  \de = \de_1 <  \de_0$. Second   that    there exists,   $ c_{**}  \geq 1, \be_{**} \in (0, 1), $  depending only on the data with 
  \begin{align}
\label{3.21a}   \,  \left| \frac{u_1 (x)}{\hat u  (x)}  -    \frac{ u_1 (y)}{\hat u  (y)}   \right|   \,  \leq \,   c_{**}  \,  \frac{u_1 (x)}{\hat u  (x)} \,  \,  \left(  \frac{ | x - y|}{\rho} \right)^{\be_{**} } 
\end{align}
whenever    $ \rho > 0 $   and  $  x, y $  in  $ K ( \pi )  \cap B ( 0, \rho).  $    Letting $ \rho  \to \infty $ in this inequality it follows that $ \hat u = u_1 ( \cdot, \pi) $ so  $u_1 $ is unique. 
 \end{proof}
 
\begin{proof}[Sketch of Proof of  \eqref{3.21a}]
To briefly outline the strategy in the proof of  \eqref{3.21a}, assuming  \eqref{3.21}  for  $ \hat u, u_1, $   in   $ \rn{n} \sem \ar K (\pi), $ when  $ n \geq 3, $   suppose   $ a, b \in ( 0, \infty ). $  Then using   Lemmas 2.1, 2.2, and  \eqref{3.21},   one can show that  $  \chi (x)   =     ( a \, | \nabla \hat u (x)  |  + b \, | \nabla   u_1 (x) | )^{p - 2} $ is an $ A_2 $  weight on $ \rn{n} $  with  $ A_2 $ constant  $\leq c $  where  $ c $  depends only on the data.  That is,  
\[  \left(   \int_{B( y, r ) } \chi dx  \right) \cdot \left(   \int_{B( y, r ) } \chi^{-1}  dx  \right) \,  \leq  \,  c \, r^{2n}  \mbox{ whenever $ y  \in  \rn{n}$ and $ r > 0$} .  \]  
 Also $ \ze  = a  \, u_1  -  b \, \hat u$  is  a  weak  solution to  the degenerate elliptic divergence form PDE,  
\begin{align}  
\label{3.22}    
L  \ze = \sum_{i,j=1}^n \frac{ \ar (  b_{ij} (x) \ze_{x_j} )}{ \ar  x_i } \, =  \,   0
\end{align}
where    
\begin{align}  
\label{3.23}  
b_{ij}( x ) =   \int_0^1  f_{\eta_i \eta_j} ( t  a \,  u_1 (x, \pi) + (1-t) b \, \hat u (x) ) \, dt
\end{align}  
whenever  $ x \in  K (\pi).$   Moreover, for some $ c   \geq 1 $ depending only on the data, 
\begin{align}
\label{3.24}   
c^{-1}   \chi ( x ) | \xi |^2   \leq  \sum_{i, j = 1}^n      b_{ij}( x ) \xi_i  \xi_j  \,   \leq \,   c  | \xi |^2  \chi ( x) \quad \mbox{for}\, \,  \xi \in \rn{n} \sem \{0\}. 
\end{align}
Using   \eqref{3.22}-\eqref{3.24}, one can then use   the   boundary  Harnack  inequalities from divergence form linear degenerate elliptic PDE  whose degeneracy is given in terms of an  $A_2 $  weight to get  \eqref{3.21a} (see section 4 in \cite{LN2}) .  \eqref{3.21} for  $ \hat u $ is proven by a  perturbation type argument as in  (4.42)-(4.45)  of  \cite{LN2}. 
   \end{proof}
   
\begin{proof}[Uniqueness of  $ u_2 ( \cdot, \pi) $]
Uniqueness of  $ u_2 ( \cdot, \pi) $ is proved similarly.  Indeed suppose  $ \hat u  $ is also  $ \mathcal{A}$-harmonic  in $ K (\pi) $ with continuous boundary value 0  on $ \ar K ( \pi ) \sem \{0\}, $  and     $  \lim_{|x| \to \infty}  \hat u ( x ) = 0. $   
Then  \eqref{3.20} and  \eqref{3.21} in  $  K ( \pi  ) $  are  valid with $ u_1 ( \cdot, \pi) $ replaced by $ u_2 (  \cdot,  \pi) $  by the same argument as the one we gave for $ u_1 ( \cdot, \pi). $ 
These inequalities  can then be used as outlined above  to show  that for some  
$ \bar c^*   \geq 1$ and $\bar \be^*  \in ( 0, 1 ), $ depending only on the data,    that  
\begin{align}  
\label{3.25}  
 \left| \frac{u_2 (x, \pi )}{\hat u (x)}  -    \frac{u_2 ( y, \pi )}{\hat u  (y)}   \right|   \,  \leq \,   c^*  \,  
\frac{u_2  (x, \pi )}{\hat u  (x)} \,  \,  \left(  \frac{ \rho}{ \min \{ | x |, | y| \} }\right)^{\bar \be^*} 
\end{align}
whenever $ |x|, |y| \geq 2 \rho.$  Letting $ \rho  \to 0 $ we then get  $ u_2 ( \cdot, \pi ) = \hat u. $ This  completes the proof  of  uniqueness for $  u_1 ( \cdot, \pi)$ and $u_2 ( \cdot, \pi). $    
\end{proof}

\setcounter{equation}{0} 
 \setcounter{theorem}{0}

\section{Proof of  \eqref{1.7} in Theorem \ref{thmA}}  
\label{section4}  
To  show   $ \la ( \pi ) = 1 - (n-1)/p  $  for fixed  $ p  >  n - 1,  n   \geq 2, $  and  $ f  $  as in  
\eqref{1.5},    we let  $  0 < \de    < 10^{-100}$  be  a small  
but fixed number.   Also  $ \ep > 0,  $   $ 0 <  \ep  < <  \de^{1000}  $ is   allowed  to vary.  Put      
\[
 E      =   \{ x:  x_1   \geq  - 1  \}  \sem K (\pi - \ep ).
 \]
Given $  \eta   \in  \rn{n} \sem \{0\}, $  let   $  \hat  f   ( \eta )  = f (- \eta )$ when $ n - 1 < p < n.$ 
If  $ n - 1 <  p < n $  let $   U =  1 -  \hat U  $   where  $ \hat U $ is 
the  $   \hat{\mathcal{A}} =   \nabla  \hat f$-capacitary   function  
for  $  E  $  as  in  Theorem \ref{thm2},  so  $ U $ is  $ \mathcal{A} = \nabla f $-harmonic.    
If  $ n  \leq p <  \infty $  let  $   U   $  be the  $    \mathcal{A}  = \nabla    f $-harmonic Green's  function for   $E$ as in Theorem  \ref{thm1}.    We  also  write  $  u$ and $\la$   for $ u_1 ( \cdot, \pi - \ep )$ and $\la_1  ( \pi - \ep )$  in Theorem \ref{thmA}  when there is  no  chance of  confusion.   We shall  need the following  lemma (see 
Definition \ref{def2.8},   Remark \ref{rmk2.11} for notation).   
\begin{lemma} 
We have 
\label{lem4.1}       
\begin{align*}
p {\ds \int_{ \ar E }} &    \,  \lan  x  + e_1 ,    \mathbf{n}  \ran    f ( \nabla 
  U  )   d \mathcal{H}^{n-1}  
= \begin{cases}   \frac{p (n-p)}{p-1}  \mbox{Cap}_{\mathcal{\hat A}} (E) & \mbox{when } n - 1 < p < n \quad   \,   \\  \ga    &  \mbox{when}\, \, p  = n   \\
    \, \frac{p-n}{p-1}  \, \mathcal{C}_{\mathcal{A}}  (E)^{1/(p-1)}  
   &  \mbox{when}\, \, p  > n  \\
   \end{cases}\quad  \approx   c  
 \end{align*}  
where   $ \mathbf{n} ( x )  $  denotes  the  outer unit normal  at   $ x \in \ar  E $ and $ c $ depends only on the data.     
   \end{lemma}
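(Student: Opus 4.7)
\smallskip

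\textbf{Proof proposal.} My plan is to derive the identity from a Pohozaev/Rellich--Ne\v{c}as type computation with the radial vector field $X(x) = x + e_1$ centered at $-e_1$ (the ``tip'' of $E$), and then to obtain the ``$\approx c$'' conclusion by a geometric capacity estimate. The key vector field is
\[
\mathbf{F}(x) \;=\; f(\nabla U(x))\,(x+e_1) \;-\; \langle x+e_1,\nabla U(x)\rangle\,\mathcal{A}(\nabla U(x)).
\]
A direct computation, using the homogeneity of $f$ (Euler's identity $\langle \nabla f(\eta),\eta\rangle = p\,f(\eta)$) and the $\mathcal{A}$-harmonicity of $U$ in $\mathbb{R}^n\setminus E$, yields
\[
\operatorname{div}\mathbf{F} \;=\; (n-p)\,f(\nabla U)
\]
in the weak sense on $\mathbb{R}^n\setminus E$. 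Integration over $\Omega_R := (\mathbb{R}^n\setminus E)\cap B(0,R)$ via the divergence theorem then produces a boundary contribution on $\partial E$ and one on $\partial B(0,R)$.

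On $\partial E$, since $U\equiv 0$ one has $\nabla U = |\nabla U|\,\mathbf{n}$ with $\mathbf{n}$ the outer unit normal to $E$, and another use of Euler's identity collapses the $\partial E$-piece of $\mathbf{F}\cdot(-\mathbf{n})$ to $(p-1)\,f(\nabla U)\langle x+e_1,\mathbf{n}\rangle$. Hence
\[
(n-p)\int_{\Omega_R} f(\nabla U)\,dx \;=\; (p-1)\int_{\partial E} f(\nabla U)\langle x+e_1,\mathbf{n}\rangle\,d\mathcal{H}^{n-1} \;+\; \mathcal{B}(R),
\]
where $\mathcal{B}(R)$ is the flux of $\mathbf{F}$ through $\partial B(0,R)\cap \Omega_R$. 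This is precisely the integral identity recorded in \cite[sec.~10]{AGHLV} (for $n-1<p<n$) and \cite[sec.~8]{ALSV} (for $p\geq n$), and Lemma~\ref{lem4.1} is obtained by passing to the limit $R\to\infty$ in each of the three regimes.

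For $n-1<p<n$, both $\int f(\nabla U)\,dx$ is finite and $\mathcal{B}(R)\to 0$ (the asymptotics of the capacitary function force the integrand on $\partial B_R$ to decay with positive power), yielding the coefficient $p(n-p)/(p-1)$ multiplied by $\int f(\nabla U) = \operatorname{Cap}_{\hat{\mathcal{A}}}(E)$ (in the paper's normalization, see Definition \ref{def2.8}(d) together with Euler). For $p=n$ the volume term carries the factor $n-p=0$ and only $\mathcal{B}(R)$ contributes; the logarithmic asymptotics $U(x) = F(x) + O(1)$ with $F\sim \gamma\log|x|$ make $\mathcal{B}(R)$ converge to an explicit constant depending only on the data, which is what is denoted $\gamma$ here. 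For $p>n$ both the volume integral and $\mathcal{B}(R)$ diverge like $R^{(p-n)/(p-1)}$, but the dominant terms cancel (this is the very reason the identity has been written in this symmetric form); the surviving subprincipal term is driven by $k(\infty)$ in the expansion $U=F+k$, which by Remark \ref{rmk2.11} gives exactly $\tfrac{p-n}{p-1}\,\mathcal{C}_{\mathcal{A}}(E)^{1/(p-1)}$. The hard part of this step is the $p>n$ cancellation, but it follows from the asymptotic expansion of $U$ and its gradient at infinity that is established in \cite{ALSV}.

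For the final $\approx c$ claim, I will use the geometry of $E$. The elementary inclusion $E\subset\{-1\le x_1\le 0,\ |x'|\le \tan\varepsilon\,|x_1|\}$ and the fact that $E$ contains the segment joining $0$ and $-e_1$ show $\operatorname{diam}(E)\approx 1$ and $\mathcal{H}^{1}(E)\geq 1$. Since we are in the regime $p>n-1$, Remark \ref{rmk2.7} implies that $E$ is uniformly $(1,p)$-fat with constant depending only on the data, and therefore $\operatorname{Cap}_{\hat{\mathcal{A}}}(E)\approx 1$ when $n-1<p<n$. For $p\geq n$, combining \eqref{2.12a} with the bound $\max_{B(0,2)} U\approx 1$ (forced again by fatness and by the normalization $\nu(E)=1$ of Remark \ref{rmk2.11}) yields $\mathcal{C}_{\mathcal{A}}(E)\approx 1$, so $\mathcal{C}_{\mathcal{A}}(E)^{1/(p-1)}\approx 1$. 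Together with the fact that $\gamma$ is a constant depending only on the data, these bounds give the uniform estimate $\approx c$ in all three cases. I expect the main technical difficulty to lie in the $p>n$ case, where the divergent pieces of the volume term and of the spherical flux have to be balanced via the asymptotic expansion of the $\mathcal{A}$-harmonic Green's function at infinity.
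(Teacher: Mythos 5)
Your proposal is correct in outline, but it takes a visibly different route from the paper, whose entire proof is a citation: the identity is quoted from Remark 11.3 of \cite{ALV} ($p\geq n$) and Remark 13.4 of \cite{AGHLV} ($n-1<p<n$), where it is derived via the Hadamard variational formula applied to dilations of $E$, and the only work done here is to observe (via \eqref{1.8}$(ii)$ and the definition of the support function) that the support function taken relative to $0$ or to $-e_1$ gives the same integral. Your Rellich--Pohozaev computation with $\mathbf{F}=f(\nabla U)(x+e_1)-\langle x+e_1,\nabla U\rangle\mathcal{A}(\nabla U)$ is the direct analytic derivation of the same identity; indeed the closing remarks of the paper concede that the cited results "follow from a Rellich type inequality," so your argument reconstructs what sits underneath the citation, and your divergence computation $\operatorname{div}\mathbf{F}=(n-p)f(\nabla U)$ and the collapse of $\mathbf{F}\cdot(-\mathbf{n})$ to $(p-1)f(\nabla U)\langle x+e_1,\mathbf{n}\rangle$ on $\{U=0\}$ are both right, as is the treatment of the $\approx c$ claim via \eqref{2.9a}, \eqref{2.12a} and Remark \ref{rmk2.7}. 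What your sketch buys is self-containedness and a transparent explanation of where each of the three constants comes from; what it leaves entirely to the references are the two genuinely delicate points: (i) justifying the divergence theorem up to $\partial E$ for this degenerate convex body (a truncated cone whose boundary is Lipschitz only at scale $\ep$, with $\nabla U$ blowing up at the vertex $0$ and at the rim $\{x_1=-1\}$ --- this needs the nontangential limits and reverse-H\"older estimates of Lemma \ref{lem3.1} together with an approximation by smooth convex bodies), and (ii) the cancellation of the two $R^{(p-n)/(p-1)}$-divergent terms when $p>n$, which requires the full asymptotic expansion of $U=F+k$ from \cite{ALSV}, not just $k(\infty)$. One small flag: be careful with the normalization of capacity --- Definition \ref{def2.3} uses $\int f(\nabla w)$ while Definition \ref{def2.8}$(d)$ uses $\int\langle\mathcal{A}(\nabla\ti U),\nabla\ti U\ran=p\int f(\nabla\ti U)$, and the coefficient $\frac{p(n-p)}{p-1}$ in the statement matches the first convention, so you should fix one before asserting the constant.
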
  
   \begin{proof} 
    Lemma \ref{lem4.1} is  proved   in  
 \cite{ALV}    (see  Remark  11.3)   for    $  p \geq  n$   and   in   \cite{AGHLV}  (see    
Remark 13.4)     for   $  n - 1 <  p  <  n$,   using  the  Hadamard  variational formula.   The integral in  these  remarks  is   
  defined  in terms of  a  measure on   $  \mathbb{S}^{n-1}$ obtained by way of the  Gauss map,  
  so  for example as in  $(c) $   of    Theorem  \ref{thm1}  for $ p  \geq n,  $ and 
  the  support  function of   a  convex  set  relative to   zero   rather than  $ - e_1. $  
  However,  using  \eqref{1.8}  $(ii)$ and  the definition  of    a  support  function it is easily seen that  both integrals are  equal.  
 \end{proof} 
 To obtain  estimates  on   $  U $  near    $  \ar E $   we note that in  \cite[Lemma 5.3]{LN2}, 
 it was shown that for  fixed $p$ with $n - 1 < p  < \infty$ and $n  \geq 3,  $ a   
 continuous function    $ w $  on $ \rn{n} $  exists with   
$ w  \equiv 0 $  on  $T$ where
\[
T := \{ x : \, \, x_k  = 0\, \, \mbox{for}\, \,  2 \leq k   \leq n\, \, \mbox{and}\, \, - \infty < x_1 <   \infty \}.  
\]
Also $ w  $  is   $  \mathcal{A}$-harmonic   in   $\rn{n} \sem T $ and    for $ x  \in \rn{n}, $
\begin{align} 
\label{4.2}   
w (x)  \approx  | x  - x_1  e_1 |^{\he}\quad  \mbox{where}\,\,   \he  =  \frac{ p + 1  -  n}{p-1}. 
\end{align}
Ratio  constants depend only on the  data.    We use  \eqref{4.2} to  show that there exists  $  \ti c_1   \geq  1   $   depending only on the data with  
\begin{align}  
\label{4.3}   
\ti  c_1  \,  U (x)   \geq  w (x) \quad \mbox{when}\, \,  x  \in B ( 0, 2 ) \cap \{y :  | y - y_1 e_1 |  \geq   \ti c_1  \ep    \}.
\end{align}  
To prove  \eqref{4.3}  observe from   Lemma \ref{lem4.1},  \eqref{2.9a} with $ \mathcal{A} $  
replaced by  $ \mathcal{\hat A}$  for $ n - 1 < p < n, $ and    \eqref{2.12a} when  $ p\geq  n $  that  
$  w \leq  c'    U  $  on  $ \ar B ( 0, 2). $  Using  \eqref{4.2}  and  the  boundary  maximum  principle  for $  \mathcal{A}$-harmonic functions it  follows  that   for  some $ c''  \geq 1,  $ 
\begin{align}
\label{4.4}   
w \leq   c'   U  + c''  \ep ^\he  \quad \mbox{in}\, \,   K ( \pi - \ep ) \cap  B ( 0, 2 )
\end{align}  
where constants  depend only on the data.  Using  \eqref{4.2} in   \eqref{4.4}   we see for $  \ep > 0, $  sufficiently small,   that  \eqref{4.3} is valid.    Next  we show for some  
$ \ti c_2  \geq 4, $ depending only on the data    that    
 \begin{align}   
 \label{4.5}  
  U/w  \leq   \ti c_2   \quad  \mbox{in}\, \,   {\ts  B ( - \frac{1}{2} e_1,  \frac{1}{ \ti c_2} )} \sem T.
\end{align} 
    To prove  \eqref{4.5}  let  $ v_1 $  be the  $ \mathcal{A}$-harmonic function in   
$ B ( - \frac{1}{2} e_1,   \frac{1}{4} )  \sem T $  with  continuous  boundary values  $ v_1 = u $ on     $ \ar  B ( - \frac{1}{2} e_1,   \frac{1}{4} ) $  and  $ v_1  \equiv 0  $  on  
$ T  \cap   B ( - \frac{1}{2} e_1,   \frac{1}{4} ) . $  Comparing boundary values of  $ u$ and $v_1 $,  we see from the maximum principle for $ \mathcal{A}$-harmonic functions  that                 
$ u  \leq v_1 $   in  $ B ( - \frac{1}{2} e_1,  \frac{1}{4} ) \sem T.$  This inequality, \eqref{4.2},    and   Lemma \ref{lem3.4} with  $ v_2  =  w,  $    give    \eqref{4.5} 
 since   $ u (- \frac{1}{2} e_1 +   \frac{1}{4} e_n ) \approx  w (- \frac{1}{2} e_1 +   \frac{1}{4} e_n ) .$    

Let  $  S   =   E  \cap   \{ y  :  y_1   \geq   - 1  +   4 \de  \}  $   and let   
$  V  $ be the  $ \mathcal{ A}$-harmonic Green's  function for  the  complement of 
$ S_1    =   E     \cap  \{ x :  x_1   \leq     -  1 + 4 \de \}$ (see Figure \ref{Kpiepsilon})  with a pole at infinity when  $ p  \geq n $ while  $  V  =   1 - \hat  V $ where $ \hat V $ is  the  $ \mathcal{\hat A}$-capacitary function for $ S_1$ if $ n - 1 < p< n.$

\begin{figure}[!ht]
\begin{center}
\begin{tikzpicture}[scale=2, font = \sansmath]

  \coordinate (O) at (0,0);


  \begin{scope}[rotate=90]

    \def\rx{.012}
    \def\ry{0.002}
    \def\z{1.6}

    \def\rs{.014}
    \def\rt{0.0028}
    \def\rl{1.98}

    \def\ru{.016}
    \def\rv{0.003}
    \def\rz{2.2}

    \def\ra{.018}
    \def\rb{0.0035}
    \def\rc{2.4}

    \path [name path = ellipse]    (0,\rc) ellipse ({\ra} and {\rb});
    \path [name path = horizontal] (-\ra,\rc-\rb*\rb/\rc) -- (\ra,\rc-\rb*\rb/\rc);
    \path [name intersections = {of = ellipse and horizontal}];


    \def\xo{2}
    \def\yo{.5}
    \def\zo{0}


  \shade[ball color = gray, opacity = 0.1] (0,0) circle [radius = 2cm];
      \draw[fill=white] (intersection-1) -- (0,0) -- (intersection-2) -- cycle;

  \draw[red, fill=red, opacity=.5] (O) -- (-0.012,1.6) -- (0.012,1.6) to [edge label = $S$] (O);
  \draw[blue, fill=blue, opacity=.5] (-0.012,1.6) -- (-.018, 2.4) -- (.018, 2.4) to  [edge label = $S_1$]  (0.012,1.6)-- (-0.012,1.6) ;

\path[opacity=.5, draw=red, decorate,decoration={brace,amplitude=8pt},xshift=0,yshift=0pt] (O) -- (-0.012,1.6) node [black,midway, yshift=-0.4cm]  {\footnotesize $1-4\delta$};
\path[opacity=.5, draw=blue, decorate,decoration={brace,amplitude=6pt},xshift=0,yshift=0pt] (-0.012,1.6) -- (-.014, 1.98)  node [black,midway, yshift=-0.3cm]  {\footnotesize $2\delta$};
\path[opacity=.5, draw=blue, decorate,decoration={brace,amplitude=4pt},xshift=0,yshift=0pt] (-.014, 1.98)--(-.016,2.2)   node [black,midway, xshift=.1cm, yshift=-0.3cm]  {\footnotesize $\delta$};
\path[opacity=.5, draw=blue, decorate,decoration={brace,amplitude=4pt},xshift=0,yshift=0pt] (-.016,2.2)--(-.018,2.4)   node [black,midway, xshift=.1cm, yshift=-0.3cm]  {\footnotesize $\delta$};


\draw[densely dashed] (0,\zo) ellipse ({\xo} and {\yo});

\draw[fill = gray!30, densely dashed] (0,\rz) ellipse ({\ru} and {\rv});

\draw[fill = gray!30, densely dashed] (0,\rl) ellipse ({\rs} and {\rt});

\draw[fill = gray!30, densely dashed] (0,\rc) ellipse ({\ra} and {\rb});

\draw[fill = gray!30, densely dashed] (0,\z) ellipse ({\rx} and {\ry});

\draw[fill] (0,\rz) ellipse ({.01} and {.01});
\draw[fill] (0,\rl) ellipse ({.01} and {.01});
\draw[fill] (0,\rc) ellipse ({.01} and {.01});
\draw[fill] (0,\z) ellipse ({.01} and {.01});

\draw[fill] (.2,\rz) ellipse ({.01} and {.01});

\node[above] at (.4, \rz) {\footnotesize $(-1+\delta)e_1+\delta e_n$};

\draw[->] (.4, \rz)--(.25,\rz);

  \end{scope}

 
 
  \filldraw (O) circle (.5pt); 
  \begin{scope}[rotate=90]
  


\node[below] at (0,-2.4) {$e_1$};
\draw[fill] (0,-2.4) circle (.5pt);
\node at (1,0) {$B(0,1-2\delta)\cap K(\pi-\epsilon)$};
  
  \end{scope}

\end{tikzpicture}
\end{center}
\caption{The set $E=S_1\cup S$ and $B(0,1-2\delta)\cap K(\pi-\epsilon)$.}
\label{Kpiepsilon}
\end{figure}
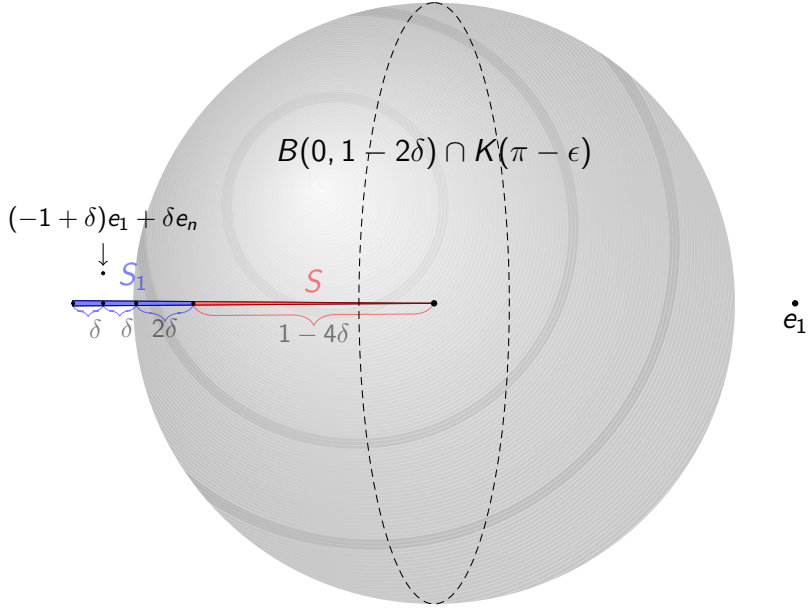

  We note  from  \eqref{2.12b} that  
$  V   \geq   U   $ in  $ \rn{n}  $  when $  p  \geq n$.    Using  this note,  \eqref{3.6} $(b)$,  \eqref{3.7},  and the  Hopf   boundary maximum  principle  we  deduce    for  $ n < p  <  \infty    $ that      
\begin{align}
\label{4.6} 
\begin{split}
   { \ds \int_{ \ar S_1  \cap \ar E}} & {\ds 
\lan y + e_1 ,\nabla  U  \ran   f ( \nabla  U  ( y ))   |  \nabla  U  ( y ) |^{-1}  d \mathcal{ H }^{n - 1}}  \\  
& \leq  
   { \ds \int_{\ar  S_1}  
\lan y + e_1 , \nabla  V   (y) \ran   f ( \nabla  V   ( y ))   
|  \nabla  V ( y ) |^{-1}  d \mathcal{ H }^{n - 1}} \\
&  \leq c  \de^{\frac{|p-n|}{p-1}}  
\end{split}
\end{align}   
   thanks   to  Lemma \ref{lem4.1} with $ E $ replaced by  $ S_1$  and \eqref{2.12},     
   where  $ c $  depends only on the data. If $ 1 < p  < n $  we see  from   \eqref{2.9} $(b)$   that  
 $ U ( x), V (x)  \rar  1$  \mbox{ as }  $ |x| \rar \infty $  so  $ U \leq V $  in   $ \rn{n}, $ 
 by  the  maximum principle for $ \mathcal{A}$-harmonic functions.  In view  of   
 this fact and  \eqref{2.4} we conclude that  \eqref{4.6} remains valid when 
$ 1 <  p  < n$  if   $   \de^{\frac{|p-n|}{p-1}} $  is  replaced by $ \de^{|p-n|}$. 
   If  $ p  =  n $  it  follows   from  \eqref{2.5} $ (ii),$   \eqref{2.8},  for   $  U$  with  $ w  = - e_1, $  and 
\eqref{2.12a},  \eqref{2.8},   dilation invariance and Harnack's  inequality for  
$  \mathcal{A}$-harmonic functions,  as applied to    $ V, $   that   for  some $ c \geq 1, \hat \si \in (0,1),    $  depending only on the data,    
  \begin{align*}
\max_ {\ar B ( -   e_1,  8 \de ) }    U    \leq   c   \de^{\hat \si} \leq c^2   
  \de^{\hat \si}\min_ { \ar  B ( - e_1,  8  \de ) }     V.
\end{align*} 
Then by the  boundary maximum principle for  $  \mathcal{A}$-harmonic functions, 
\begin{align} 
 \label{4.7}  
  U   \leq     c^{2}   \de^{\hat \si}  V  \quad \mbox{in}\, \,  B ( -  e_1,  8 \de ) \sem S_1.  
 \end{align}    
Using  \eqref{4.7}  and arguing  as  above  it  follows  for   some  $  c  \geq 1 $ that 
\begin{align}
\label{4.8} 
 \int_{  \ar S_1 \cap \ar E  }  
\lan y + e_1 ,\nabla  U  \ran  f ( \nabla  U  ( y ))   |  \nabla  U  ( y ) |^{-1}  d \mathcal{ H }^{n - 1}     \leq c  \de^{ n \hat  \si }.
\end{align}     
From \eqref{4.6}, \eqref{4.8},  Lemma  \ref{lem4.1},      we  see for  $ \de > 0  $  sufficiently small that 
\begin{align}
\label{4.9} 
\begin{split}
 \int_{ \ar E}  
\lan y + e_1, &  \nabla  U  \ran  f ( \nabla  U  ( y ))   |  \nabla  U  ( y ) |^{-1}  d \mathcal{ H }^{n - 1}\\   
&\quad \approx   \int_{  \ar S \cap \ar E }  \lan y + e_1 ,\nabla  U  \ran   f ( \nabla  U  ( y ))   |  \nabla U  ( y ) |^{-1}  d \mathcal{ H }^{n - 1}
\end{split}
\end{align}  
where constants depend only on the data.

Finally,    we claim   for  some  $ c ( \de )  \geq   1 ,  $ depending only  on the data and  $ \de $  that   
\begin{align}
\label{4.10}  
c( \de )^{-1}   \leq    \frac{ u }{  U }   \leq   c ( \de )  \quad  \mbox{in}\,\,     B ( 0, 1  -   2 \de  )  \cap  K ( \pi  -  \ep).  
\end{align}

Once \eqref{4.10} is proved we get    Theorem   \ref{thmA} as follows.   Note  that  $  S  \subset  B  ( 0,  1  - 2 \de ) $  and  in  Lemma \ref{lem4.1}, 
$  \lan  x + e_1,  \mathbf{n} ( x )   \ran  =  \sin  \ep $ when  $ x \not = 0$ and    $x\in \ar S \cap  \ar E. $     Using this  note,   
Lemma \ref{lem4.1},  \eqref{4.9}.  \eqref{4.10},  and the Hopf boundary  maximum principle
   we find    that   for some   $ \bar c (\de)  \geq 1, $  depending only on the data and  $ \de, $   
\begin{align}
\label{4.11} 
 \bar c (\de)^{-1}    \leq    \int_{ \ar S  \cap \ar E  }  \sin (\ep) 
     f ( \nabla u  ( y ))     d \mathcal{ H }^{n - 1}   
   \leq     \bar c  (  \de ).  
 \end{align}  
 We   also  note  that      $  \ar  E  \cap   B  (  - 1/2,  1/4 )$   is Lipschitz on a scale of $   \ep/100. $  
 That is, if $   z   \in   \ar  E  \cap   \bar B  (  - 1/2,  1/4 ),  $      there exists  $ \ph :  \rn{n-1}  \rar   \re $ 
 satisfying $\|  \ph   \hat \|  \leq  100$ such that after a  possible  rotation of  coordinates,   
\begin{align}  
\label{4.12}      
\begin{split}
&E  \cap  B  ( z,  \ep/100)   =   \{ x = ( x' , x_n ) :  x_n  > \ph ( x' )  \}  \cap  B  ( z,  \ep/100), \\
& \ar  E  \cap  B  ( z,  \ep/100)   =   \{ x = ( x' , x_n ) :  x_n  = \ph ( x' )  \}  \cap  B  ( z,  \ep/100). 
\end{split}
\end{align}        
 From    \eqref{4.12},   \eqref{3.8},   \eqref{1.5} $(a)$,  \eqref{3.9}  
with $q$  replaced by $p/(p-1) $ (permissible by  H\"{o}lder's inequality),   \eqref{3.5} $(b)$, and    Harnack's   inequality all   applied to    $  u$ and $U$   we  see    that   
\begin{align}  
\label{4.13}
  \int_{ \ar E  \cap  B ( z,  \frac{\ep}{1000}) }      f  ( \nabla v )  d  \mathcal{H}^{n-1} \approx    \ep^{ n   - p - 1}    v^p ( z  + 10 \ep  e_n  )  \quad \mbox{whenever}\, \, v = u\, \,  \mbox{or}\, \,  v=U 
\end{align}  
  where ratio constants depend only on the data.  Using this inequality,    \eqref{4.10},   \eqref{4.3}, \eqref{4.5},  \eqref{4.2},  and   the   Hopf boundary maximum principle once again,  we obtain     that   
\begin{align}  
\label{4.14}   
 \int_{ \ar E  \cap  B ( z,  \frac{\ep}{1000}) }      f  ( \nabla u )  d  \mathcal{H}^{n-1}  
    \approx    \ep^{ n   - p - 1}    w ( z  + 10 \ep  e_n  )^p 
\approx  \ep^{\frac{ (p+1 - n )}{p-1}}   
\end{align}
where ratio constants depend on the data and 
$  \de. $    Integrating  \eqref{4.14}  over  
$ z \in   \ar  E  \cap   B  (  - 1/2,  1/4 ),  $ and interchanging the order  of  integration or giving  a  covering argument,   we  conclude  after  some arithmetic that  
\begin{align}    
\label{4.15}    \int_{ \ar E  \cap  B ( -1/2, 1/4) }     f  ( \nabla u )  d  \mathcal{H}^{n-1}  
    \approx      \ep^{\frac{ 2-n }{p-1} }.   
    \end{align}
Using  \eqref{4.15},   $  ( \la  (  \pi - \ep )  -  1) p $-homogeneity  of  $  f  (  \nabla u ), $  and  $  0  <  \la ( \pi  -  \ep)  < 1$   in    \eqref{4.11}  we  arrive at    
\begin{align} 
 \label{4.16}  
 \begin{split} 
 1  &\approx       \int_{ \ar S    }  \sin (\ep) 
     f ( \nabla u  ( y ))     d \mathcal{ H }^{n - 1} \\
     & \approx    \ep^{\frac{ (p + 1 -n) }{p-1} }    \int_0^1  t ^{ p  ( \la - 1)  + n - 2 }  dt  \\
&\approx    \frac{1}{ p  ( \la - 1)  + n - 1}  \ep^{\frac{ (p + 1 -n) }{p-1} }  
\end{split}
\end{align}
where for brevity we have written    $  \la  $ for $ \la ( \pi -  \ep ).$  Also ratio constants depend only on $ \delta $ and the data.  Clearly  \eqref{4.16} implies    that  
\[   
\la  -   1   +  (n-1)/p   \approx  \ep^{\frac{ (p + 1 -n) }{p-1} } .    
\]      
    So if $  \al  = \pi - \ep $  and we use the notation in  Theorem \ref{thmA} it follows  from 
this  inequality  that   there exist $ \de_0$ and $\ep_0$ with $0 <  \ep_0   < <  \de_0,  $  and   a  positive  
constant  $k  \geq 1$ depending on $ \de_0 $  and the data  such that   if   $ \pi - \ep_0  <  \al  <  \pi, $   then  
\begin{align}   
\label{4.17}    
k^{-1}    (  \pi  -  \al )^{\frac{ (p + 1 -n) }{p-1} }    \leq  \la_1 ( \al )  -  1  +  ( n - 1)/p    \leq   k  \,  (  \pi  -  \al )^{\frac{ (p + 1 -n) }{p-1} }.  
\end{align}
Thus   Theorem \ref{thmA} is true  once  we  prove  claim   \eqref{4.10}. 

Claim \eqref{4.10}  is  easily proved for  $ n = 2$ using  \eqref{3.10}   on both sides of  $  \ar K ( \pi - \ep ) $    
in each of  the  Lipschitz domains obtained from removing  the positive $ x_1 $  axis 
from   $ B ( 0, 1 - 2 \de)  \cap  \ar K ( \pi - \ep)$ (see Figure \ref{Kpiepsilon}) as well  as   Harnack' s  inequality and   $ u ( e_1) \approx  U (e_1)  \approx  1. $

\begin{figure}
\begin{center}
\begin{tikzpicture}[scale=2, font = \sansmath]

\begin{scope}[shift={(-3,0)}]

  \shade[ball color = gray, opacity = 0.5] (0,0) circle [radius = .5cm];

    \def\rx{.1}
    \def\ry{0.02}
    \def\z{1.5}

  \def\ra{.1}
    \def\rb{0.04}
    \def\rc{1.5}

\draw[blue, fill=blue, opacity=.5]  (1.5, -.05)--(-1.5,-0.1)--(-1.5,0.1) -- (1.5, .05) --cycle;

    \path [name path = ellipse]    (\rc,0) ellipse ({\ra} and {\rb});
    \path [name path = horizontal] (-\ra,\rc-\rb*\rb/\rc) -- (\ra,\rc-\rb*\rb/\rc);
    \path [name intersections = {of = ellipse and horizontal}];
\draw[fill = blue!50, densely dashed] (-1.5,0) ellipse ({\rb} and {\ra});

\draw[fill = blue!50, densely dashed] (1.5,0) ellipse ({.002} and {0.05});


\draw[red, fill=red, opacity=.5]  (1.5, -.05)--(4.5,0) -- (1.5, 0.05);

\node[above] at (4.5,0) {$O$};
  \filldraw (4.5,0) circle (.2pt);

\node[blue] at (-1,0) {$S_1$};
\node[above, red] at (3,.5) {$S$};
\draw[dashed, ->] (2.95,.53)--(2,0); 

  \filldraw (-1.5,0) circle (.2pt);
    \node[above] at  (-1.5,.1) {$-e_1$};
    
  \filldraw (1.5,0) circle (.2pt);
    \node[above] at  (1.5,.1) {$(-1+2\delta)e_1$};

  \filldraw (0,1) circle (.5pt);
  \node[above]at (0,1) {$(-1+\delta)e_1+\delta e_n$};
  
    \filldraw (0,0) circle (.5pt);
  \draw[dashed, ->] (0,-1)--(0,-.25);
\node[below]   at (0,-1) {$B((-1+\delta)e_1, \frac{\delta}{200})\setminus E$};
  
\end{scope}  
\end{tikzpicture}
\caption{The set $B((-1+\delta)e_1, \frac{\delta}{200})\setminus E$.}
\label{BKe}
\end{center}
\end{figure}
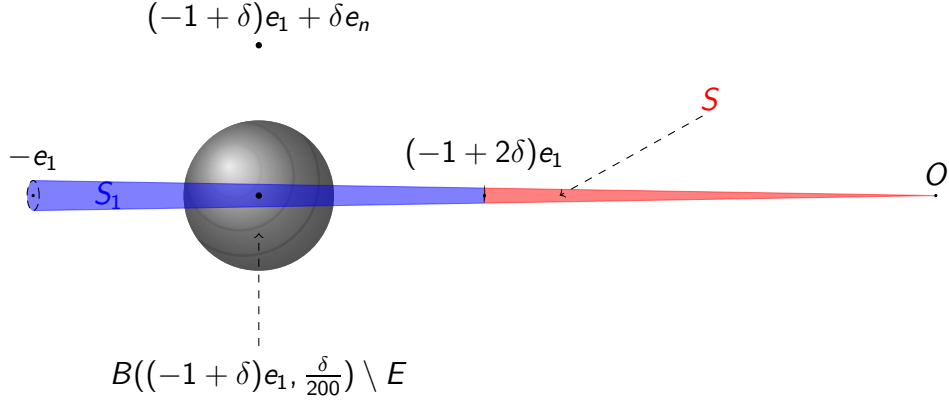

 Thus we assume  $n > 2.$   In  this case   we  give  an argument  which was  first used in  \cite[Lemma 2.16]{BL}  and  later in   \cite[section 6.1]{LN2}.    To  begin   
note that   \eqref{4.10}  on $  \ar B ( 0, 1  -   2 \de  )  \cap  K ( \pi  -  \ep) $ follows  from    
\begin{align}  
\label{4.18} 
c^{-1}  \frac{u ( (\de - 1) e_1  +  \de e_n ) }{ U ( (\de - 1) e_1  +  \de  e_n )} 
\, \leq \,   \max_{   B   ( (\de - 1) e_1,  \de/200) \sem E }    \frac{u}{ U} \, \leq  \, c   
\frac{u ( (\de - 1) e_1  +  \de e_n ) }{ U ( (\de - 1) e_1  +  \de  e_n )} 
\end{align} 
for some $ c \geq 1, $ depending only on the data,  as we see from 
  $   0  <  \ep  < <  \de  < < 1, $   $   U ( e_1)  \approx  u ( e_1 )  \approx 1, $   
   and   Harnack's  inequality for  $ \mathcal{A}$-harmonic functions.  Then \eqref{4.10}  in  $   B ( 0, 1  -   2 \de  )  \cap  K ( \pi  -  \ep) $ follows from the boundary  
   maximum principle for $ \mathcal{A}$-harmonic functions.

   To prove the  right-hand  inequality in   \eqref{4.18} 
 let    
\[   
C_t  :=    \{ x = (x_1, x') \in \rn{n} :  ( \de -   t - 1 ) < x_1 <  (  \de +  t -  1 ),\,   |x'| < \de/1000 \} 
\] 
 when $  t \in       [ \de/200  ,    \de/10 ] $ and suppose that 
$ u / U     >     \zeta   $ at some point in  $ \ar  C_{\de/200} \sem E . $   Given    $   t  \in  (\de/200,  \de/100 ) $   observe  from  Harnack's   inequality  and the maximum principle  for 
$ \mathcal{A}$-harmonic functions  that  either we have   $    u(y)/ U (y)    >   \zeta $  at some $y$ in  $  \ar  C_t \sem E   $  with  $  y_1 =  \de  \pm t  - 1 $ or   the  right-hand inequality in  \eqref{4.18} holds.   
 If $ \ze $  is large enough this  observation implies  that there exists    $ I =   [\de/200, \de/100] $ or $ I = [- \de/100, - \de/200]$  such that  
for all $ t \in I $   there is       $y''= y'' ( t )$  with $ 0  < |y''|  <  \de/1000 $  and  
\begin{align}
\label{4.18a}   
 \frac{u ( \de  + t -  1, y'' )}{ U ( \de  + t  - 1, y''  )}  >  \ze.
\end{align}
  If for example  there exists  $ t' \in [- \de/100, - \de/200]$ 
 such that for all    $  y  =  (\de   +  t'  -  1 , y'') $ in   $  \ar C_{- t'} \sem E $   we have  $ u (y)/ U (y)   \leq  \ze, $  then we can apply the above analysis  in  
\[ 
 \{ x = (x_1, x') \in \rn{n} :  ( \de +   t' - 1 ) < x_1 <  (  \de +  t -  1 ) ,   |x'| < \de/1000\} 
 \] 
whenever  $ t   \in   [\de/200, \de/100] $  to conclude the existence of  $ I  
=  [\de/200, \de/100]. $ 
 
 Let  $ \nu$ and $\tau $  denote the measures  associated  with  $ u$ and $U $  restricted to  
  $  C_{\de/10}.  $  We observe from   \eqref{3.5} $ ( b)$  that  $ \nu$ and $\tau $  are doubling measures  in the  sense that if  $  z  \in    C_{\de/100} \cap \ar E  $  and  $ 0  <  s  <  \de/200, $  then  
\begin{align} 
\label{4.19}      
 \he  (  B  ( z, 10s ) )   \leq  c\,   \he ( B  ( z, s ) ) 
\end{align} 
for $ \he \in  \{ \nu, \tau \} $ and some  $ c \geq 1 $  depending only on the data.

Given   $ t  \in I,$  choose  $ y'' ( t ) $  as  in \eqref{4.18a}.   If $ |y'' (t)| > 4 \ep $ we put  $  \rho(t)  =   |y'' (t) |.$
Otherwise since as noted earlier  $   \ar E  \cap  B ( - 1,   1/4) $  is Lipschitz on a scale of  $  \ep/100, $ we deduce from \eqref{3.10} of  Lemma \ref{lem3.2} that  there exists   $  \hat y''  = 
\hat y'' (t)  $  with  $  | \hat y'' | = 4 \ep $  and  
\[     \ze   <   \frac{ u (\de +  t - 1, y'' )}{ U (\de +  t - 1, y'' ) }     \leq  c   \frac{ u (\de +  t - 1, \hat y'' )}{ U ( \de + t - 1, \hat y'' ) }. \] 
In this case we put  $  \rho ( t )  =  4 \ep. $ Set    $ \ti y'' ( t ) =  y'' ( t) $ when  $ |y'' ( t ) |  > 4 \ep $  while   $ \ti y'' ( t ) = \hat y'' ( t ),  $ otherwise.   Using  \eqref{4.19}  and  \eqref{3.5} $(b)$ once again it follows that 
\begin{align}
 \label{4.20}   
    \zeta^{p-1}   \leq    c \left(  \frac{ u (\de + t - 1, \ti y''(t) )}{ U  (\de + t - 1, \ti y'' ( t ) )}  \right)^{p-1}   \,  \leq \,  c^2 \,  
\frac{ \nu ( B ( (\de + t - 1)e_1 ,  \rho (t) ) )}{ \tau ( B (  (\de  + t - 1)e_1 , \rho (t) ) )} \, .  
\end{align}
 Next using  a  standard covering lemma  we see there exists $  \{ t_j \},    t_j  \in I ,  $  for which   \eqref{4.20} holds with  $ t, \ti y'' ( t ), \rho ( t  ), $  replaced  by  $ t_j, \ti y'' ( t_j ),\rho (t_j). $
 Also if  $  \kappa  ( t_j)  =   (\de + t_j - 1)e_1,  $ then   
\begin{align} 
  \label{4.21}   
  \begin{split}
   L  :=  \{  y :  y_1 =   \de + t  - 1 ,   t \in I  \}  \cap  \ar  E     \subset  \bigcup_j  B ( \kappa ( t_j), \rho(t_j)), \\
 B ( \kappa (t_k),  \rho(t_k)/5) \cap  B ( \kappa(t_l) , \rho(t_l)/5 ) = \es \quad  \mbox{when}\, \, l  \not = k.
 \end{split}
  \end{align}
  From \eqref{4.19}, \eqref{4.20}, \eqref{4.21},   \eqref{3.5},  and  Harnack's  inequality    it follows,  for some $  c   \geq 1, $  depending only on the data, that
\begin{align} 
 \label{4.22} 
 \begin{split}  
    \ze^{p-1}   \tau (L )    & \leq   \ze^{p-1}   \tau \left( \bigcup_j  B  ( \kappa(t_j), \rho ( t_j ) ) \right) \\
    &   \leq  \ze^{p-1}   \sum_j  \tau ( B  ( \kappa(t_j), \rho ( t_j )) )  \\   
&\leq c    \sum_j  \nu ( B ( \kappa(t_j), \rho ( t_j) /5)  )   \leq c^2 \nu ( L ).  
\end{split}
\end{align}

Also  from  \eqref{2.8}  and Harnack's   inequality   we see that 
\[ 
\de^{p-n}  \tau ( L ) \approx  U  (  ( \de - 1) e_1  + \de e_n )^{p-1}   \quad  \mbox{and}\quad  \de^{p-n}   \nu ( L )  \approx  u  (  ( \de - 1) e_1  + \de e_n )^{p-1}
\]  
where ratio constants depend only on the data.  
Using these inequalities  in    \eqref{4.22}  we find that  
\[   
\ze      \leq  c  \frac{ u ( ( \de - 1) e_1 + \de e_n)}{\hat U  ( ( \de - 1) e_1 + \de e_n)}.
 \]  
 The right-hand  inequality in \eqref{4.18}  follows from this  display and Harnack's inequality for 
$  \mathcal{A}$-harmonic functions  with  
$  2 \ze =  \max (u/U)  $  on  $  \ar  C_{\de/200} \sem  E. $   Interchanging the roles of  $  u$ and $U $  in  this  argument we get the left-hand  inequality  in   
\eqref{4.18}.  This  completes the  proof  of  claim \eqref{4.10}  and  so  also of  Theorem  \ref{thmA}.   

\setcounter{equation}{0} 
 \setcounter{theorem}{0}

 \section{Proof of  Theorem \ref{thmB} } 
 \label{section5}  
We  begin this section with a  discussion of some familiar concepts from convex geometry which 
were used in \cite{J,CNSXYZ} to prove  analogues of  Theorem  \ref{thmB}.   
  Let  $ E \subset  \rn{n} $  be  a  compact convex set with non-empty interior. Translating and dilating  $ E $  
  if necessary  we may assume that   $  \bar B (0, 1)  \subset  E$  is  a ball with largest  radius contained
   in  $ E$  while  $ \bar B ( 0,   \ti  R_0) $ is  the   ball with  smallest radius  and center at the origin  
   containing $ E$ for some $\ti R_0>0$.   Then  $   \ti e   =  1 / \ti R_0  $ is  called the  eccentricity of  $ E. $  
     From basic geometry one sees that  if $ w  \in   \ar E $   there exists  $ \hat c =  \hat c(n) \geq 1, $  depending only on $n$  
   such that  $ \ar  E $  can be covered by at most  $ N = \hat c^2  (\ti  e)^{1-n}   $ balls,  $  B (  w  ,    \ti  r  ),$    with   
$  w  \in \ar E,  \ti  r  \geq  1/8 ,   $  and  the property that  after a possible change of  coordinates  there exists
  a  real valued  convex function $ \ph $   on   
  \[
   \bar B'  ( w' ,   \ti  r ) := \{  x'  = ( x_1, \dots, x_{n-1} )  : |x' - w' | \leq   \ti  r \}
   \]
 which  extends to  a  Lipschitz function on
  $ \rn{n-1} $  with  $  \| \ph  \hat \|  \leq  \hat c/\ti e$. Moreover, if we let  $w = ( w', w_n)$ and 
$\ph ( w' )  = w_n, $ after a possible change of  coordinates,     we also have
\begin{align} 
\label{5.1}
\begin{split}  
& \{ ( x', x_n ) :  x_n =  \ph ( x' ) \, \, \mbox{and}\, \, x'  \in  \bar B' ( w', \ti r) \}  \subset   \ar E,\\  
&\{ x =  ( x', x_n ) :  x_n  >  \ph ( x' )    \} \cap B ( w,  \ti r )  \subset  E \sem   B  ( 0 ,  1/2).
\end{split}
\end{align}

\begin{definition} 
\label{def5.1}  
Let  $ \psi  $  be  a  real valued convex function on a  
bounded  convex open  set  $  \Om  \subset \rn{n-1}. $  If  $ x'  \in \Om $  we  write    $   \he =  ( \he_1, \dots,  \he_{n-1} ) \in \ar \psi ( x' )  $  provided  
$   \psi ( y') \geq  \psi ( x' )  +    \lan \he,  y'  -  x' \ran  $  whenever  $  y'   \in  \Om.   $      
 If  $  \tau  $  is   a  finite  positive Borel measure on  $ \Om$   then    $ \psi $ is said to be  a  solution to the  Monge-Amp{\`e}re equation
\begin{align}
 \label{5.2}  
\text{det}(\nabla^2 \psi ) = \tau \quad \mbox{on}\, \, \Omega
 \end{align}
 in the sense of  Alexandrov  provided that 
\begin{align}
  \label{5.3} 
   \mathcal{ H}^{ n - 1}  \left(  \bigcup_{x' \in K}   \ar \psi (x')  \right)   =  \mathcal{H}^{n-1} \left( \ar \psi ( K )  \right)  =  \tau (K) \quad \mbox{for each  Borel set   $ K  \subset \Om$}.
   \end{align}
 \end{definition} 
Let  $ \mathbf{g}(\cdot, E)$  denote the  Gauss  function for  $ \ar  E, $     suppose  \eqref{5.1} is valid,    and  set $  \Om =   B' ( w' ,  \ti r )$ and $\ph =  \psi$.   If  $ x'  \in \Om$  then  one can define 
\begin{align}
 \label{5.4}  
  \mathbf{g} (( x',  \ph ( x' )),E)   =  \left \{ \frac{(   \he  , - 1   ) }{( 1 + |\he|^2)^{1/2} } :\, \,  \he   \in  \ar  \ph ( x'  ) \right\}.  
 \end{align}

 We note that  the  mapping   $ x'   \mapsto \frac{ \lan x', - 1 \ran  }{ (1+|x'|^2)^{1/2}} =  \xi   $  is  one to one  from $ \mathbb{R}^{n-1} $   onto  
$  \mathbb{S}^{n-1} \cap \{ \xi : \, \xi_n < 0 \}. $   Moreover, the inverse of this mapping    has   Jacobian $ |\xi_n|^{-n}$  at  $ \xi$ with $|\xi |  = 1$ and $\xi_n    < 0 $.  Using this fact, it follows  from 
\eqref{5.2}, \eqref{5.3}, and \eqref{5.4} that if  $ K \subset B ( w', \ti r ) $ is a Borel set  and 
$ \ti K : = \{ ( x', \ph ( x' )) : x' \in K \}$   then
\begin{align}
 \label{5.5}  
  \mathcal{ H}^{ n - 1} ( \ar  \ph (K) )   =   \tau ( K ) =  \int_{ \mathbf{g} (\ti K, E)} \, |\xi_n|^{-n}   d  \mathcal{H}^{n-1}
  \end{align}  
 in the sense of  Alexandrov.  Next  suppose for  fixed  $ p$ with  $1 < p <  \infty$   that  $ U$, $E$,  and  $\mu $  are as   in  Theorem  \ref{thm1} or  $U = 1 - \ti U $  where  $ \ti U $ is  as in   Theorem \ref{thm2}.  Then  for   $ \mathcal{H}^{n-1} $-almost  every  $ y  \in  \ar  E $ we  see from  Theorems  \ref{thm1} and  \ref{thm2}  that   
\begin{align}
\label{5.6}    
\mathbf{g} ( y ,E)  =     \frac{ \nabla  U ( y ) }{ |\nabla  U (y) |} \,  .
\end{align}
Thus   $ \mathbf{g}(\cdot, E)$  is  well  defined  by   \eqref{5.6} on a  Borel set  $ E_1  \subset  E $   with  $    \mathcal{H}^{n-1} ( E \sem E_1) = 0.  $   If  also  $ d \mu  =  \He    d \mathcal{H}^{n-1}|_{\mathbb{S}^{n-1}}  $   and  there exists  $  \ti a_3  \geq 1 $   such that    
\begin{align}
 \label{5.7} 
 0   <  \ti a_3^{-1}       \leq    \He   (\xi)  \leq \ti a_3  \quad  \mbox{for}\, \,  \mathcal{H}^{n-1}\mbox{-almost  every}\, \, \xi  \in \mathbb{S}^{n-1},
 \end{align}
then from    $  \|  |\nabla \ph|  \|_{\infty}  \leq c/ \ti e  < \infty , $  finiteness and positivity  of  $ \mu,  $   as well as  the  Radon-Nikodym theorem  we conclude for  $ \tau $ and $K$ as in    \eqref{5.5}
 that 
\begin{align}
\label{5.8}     
\begin{split}
 \tau ( K ) &=  \int_{ \mathbf{g} ( \ti K, E)}       |\xi_n|^{-n} \, d \mathcal{H}^{n-1} \xi  \\
 &=  \int_{K}  \frac{( 1 + |\nabla \ph |^2 (x') )^{(1+n)/2}  f ( \nabla  U ( x', \ph (x') ))} {\He( \mathbf{g} (( x', \ph ( x') ), E))}  dx'.
\end{split}
\end{align}
Thus to prove regularity of  $ \ar E, $  we study  the  Monge-Amp{\`e}re equation  in  domains of the form  $ \Omega=B' (w' ,  \ti r  )$   with 
measure as in  \eqref{5.8}.     
To outline  some of  the work of  previous authors on  the  Monge-Amp{\`e}re equation  we need several definitions.  
\begin{definition} 
\label{def5.2}  
Given  $ \psi$,  $\Om $ as in  Definition \ref{def5.1}    and  $ x'  \in  \Om$,  $t > 0$, $\he  \in  \ar \psi ( x' )$, we put 
\begin{align}  
\label{5.9}   
S ( x',  \he, t)  :=  \{ y'  \in  \Om  :\,    \psi ( y' )  -  \psi ( x' )   -  \lan \he,  y' - x' \ran  <  t  \}  
\end{align}
and  call     $  S  = S ( x',  \he, t ) $  a  cross  section of  $ \psi . $  Define the reduced distance $ \de ( \cdot, S ) $  on  $  S (  x', \he, t ) $  by   
\[
    \de ( z',  S ) =  \min \left\{   \frac{ | z' - \hat x |}{ | z' -  \hat y |} : \, \,   \hat x, \hat y \in \ar S  \mbox{ and  $ z ' $ lies on the line segment from  $ \hat x $ to  $\hat y $}  \right\}.  
\]  
\end{definition} 
Note  from convexity of  $ \psi $  that 
$ S (  x', \he, t ) $  is  a  convex  set.  Let   $ \bar x' $ denote the centroid of  $ S $  and for   $0 <  \la < 1$, set
\[
  S ( x', \he, t, \la) :=  \{ z'   :  z'  =  \la ( y' - \bar x' ) +  \bar x',  \, \, \mbox{such that}\, \, y'  \in S ( x', \he, t )  \}.
  \]
 For  ease of  writing,   for $y'  \in \Om$, we  put
\begin{align}  
\label{5.10}   
\begin{split}
S_\la&  :=  S  ( x', \he, t, \la),\\    
l_{x',\he} (y')   &:= \psi ( x' )  +  \lan  \he, y'  - x'  \ran,\\
  \ti \psi (y' )  &:=  \psi (y' )  -  l_{x', \he} \, ( y' ) - t  
  \end{split}
 \end{align} 
 when $ x'$, $\he$, $t $  are understood.       
Then from a theorem of  John  (see \cite[A.3.2]{F})  it follows  that there exists a  unique   ellipsoid,  $ \mathcal{E}$  of  
maximum volume  with  $ \mathcal{E}  \subset  S   \subset (n - 1) \mathcal{E}. $   Using this fact and  
basic  geometry  we  deduce the existence of  a  positive constant $  \be (n)$  and  an  affine  mapping of  
the  form   $ T z'  = A ( z'  -   \bar x') $ for $z' \in \rn{n-1}$   where $ A $ is an $ n-1 \times n - 1 $  nonsingular  matrix with  
$  T ( \bar x' ) = 0$   and   
\[       
B' ( 0,  \be(n) )   \subset  T (  S  )   \subset  B' ( 0, 1).  
 \]    
Here $ T ( S ) $ is said  to be  a normalization of $S$.     Note that 
$ \de ( z' ,  T ( S ) )  =  \de ( T^{ - 1} z'  ,   S ) $   and  if  
 $  \Psi ( z' )  =  \psi ( T^{-1}  z'  )$ for $z' \in  T (S)$  then  $ \Psi $ is   convex and   
\[    
 \ar \psi ( x' )    =   A^t   \, \ar \Psi (Tx') 
 \]  
 where $ A^t $ is the transpose of $ A. $   Also  
$ \Psi  $  is  a  solution  to the  Monge-Amp{\`e}re  equation in $ T (S) $  with measure $ \mathcal{T}  $  where  
 \[ 
  \mathcal{ T}  ( T (K)  )   =  (\mbox{det } A^{-1})  \, \, \tau ( K  )  
   \] 
    whenever  
$  K  \subset  S $ is a Borel  set.   Finally,  let  $  \ti \Psi ( T y' )  = \ti  \psi ( y' )$ for $y' \in\Om. $ 
  
 Using  the  above  normalizations it was shown in \cite[Lemma 7.3]{J}  that   
\begin{lemma}
 \label{lem5.3}   
 Let  $ \Om,   \psi,  \tau,  t,  x', \he,    $  be as  in   Definitions   \ref{5.1}, \ref{5.2},   and  suppose  $ \bar S ( x' ,  \he, t ) \subset \Om. $ 
 Then given  $  0 < \ep \leq 1, $   there  is   a  positive constant  $ C ( \ep, n )$ such that 
\begin{align}  
\label{5.11}  
|  \ti \Psi (z')  |^{n-1}  \leq   C ( \ep, n )   \,  
\de ( z' , T(S))^{\ep}          \int_{T(S)} \,  \de ( y' , T(S) )^{ 1 - \ep }   d \mathcal{T}( y') 
   \end{align}
   whenever $z' \in T(S) $. 
\end{lemma}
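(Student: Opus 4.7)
The inequality is a weighted Alexandrov--Bakelman--Pucci estimate in dimension $n - 1$: the weight $\delta(\cdot, T(S))^{1 - \epsilon}$ inside the integral discounts Monge--Amp\`ere mass that concentrates near the boundary, while the prefactor $\delta(z', T(S))^{\epsilon}$ records the linear vanishing of $\tilde\Psi$ at $\partial T(S)$.  My plan is to prove the unweighted base estimate first by the standard subdifferential/polar--body argument, then to bootstrap via a dyadic decomposition of the sub-level sets of $-\tilde\Psi$ and to sum a geometric series in which the exponent $\epsilon$ is the summability margin.

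For the base estimate, fix $z' \in T(S)$ and set $h = |\tilde\Psi(z')|$; consider the polar-type set
\[
P(z', h) := \bigl\{ p \in \mathbb{R}^{n-1} : \langle p, y' - z' \rangle \le h \text{ for all } y' \in \overline{T(S)} \bigr\}.
\]
For each $p \in P(z', h)$ the affine function $\ell_p(y') = -h + \langle p, y' - z' \rangle$ satisfies $\ell_p \le \tilde\Psi$ on $\partial T(S)$ and $\ell_p(z') = \tilde\Psi(z')$, so $\tilde\Psi - \ell_p$ attains an interior minimum at some $y^\star(p) \in T(S)$, and hence $p \in \partial \tilde\Psi(y^\star(p))$.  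By the Alexandrov interpretation \eqref{5.3} of $\mathcal{T}$ this shows $|P(z', h)| \le \mathcal{T}(T(S))$, and since $T(S) \subset B'(0, 1)$ one has $B'(0, h/2) \subset P(z', h)$, yielding the unweighted bound $h^{n-1} \le C_n\, \mathcal{T}(T(S))$.

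To insert the $\delta$-weights I would localize this argument to the nested family of convex sub-sections
\[
F_k := \{ y' \in T(S) : -\tilde\Psi(y') \ge 2^{-k} h \}, \qquad k \ge 0,
\]
all containing $z'$ and exhausting $T(S)$ as $k \to \infty$.  Applied to $F_k$ after its own John normalization (with the geometric distortion absorbed into constants depending only on $n$), the base estimate yields $(2^{-k} h)^{n - 1} \le C_n\, \mathcal{T}(F_k)$.  Convexity of $\tilde\Psi$ along chords through $z'$ forces $-\tilde\Psi$ to decay essentially linearly in the boundary distance, which lets one compare the depth $\delta(y', T(S))$ on each annular slab $F_k \setminus F_{k + 1}$ to $2^{-k} \delta(z', T(S))$.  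Splitting
\[
\int_{T(S)} \delta(y', T(S))^{1 - \epsilon} \, d\mathcal{T}(y') = \sum_{k \ge 0} \int_{F_k \setminus F_{k + 1}} \delta(y', T(S))^{1 - \epsilon} \, d\mathcal{T}(y'),
\]
inserting the slab-wise estimate, and summing the resulting geometric series (which converges because $1 - \epsilon < 1$) reconstructs $h^{n - 1}$ on the left with prefactor $\delta(z', T(S))^{\epsilon}$ on the right, giving the claim.

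\textbf{Main obstacle.}  The delicate point is the sub-section localization: the eccentricities of the John ellipsoids of $F_k$ may degenerate as $k \to \infty$, so both the base estimate applied to $F_k$ and the comparison between $-\tilde\Psi$ and $\delta(\cdot, T(S))$ have to be made uniform in $k$.  The free parameter $\epsilon > 0$ is the summability margin that absorbs this potential degeneration; the constant $C(\epsilon, n)$ blows up as $\epsilon \to 0$ precisely because the geometric series barely converges in that limit.
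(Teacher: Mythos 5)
The paper does not actually prove Lemma \ref{lem5.3}; it cites Jerison's Lemma 7.3 in \cite{J} verbatim. Your unweighted base estimate via the polar set $P(z',h)$ is correct and is indeed the launching point of Jerison's argument, but the weighted bootstrap you propose has a fatal gap. The central problem is the claim that on the slab where $-\tilde\Psi\approx 2^{-k}h$ one has $\de(y',T(S))\approx 2^{-k}\de(z',T(S))$. Convexity along chords through $z'$ gives only the one-sided inclusion $\{-\tilde\Psi<2^{-k}h\}\subset\{y':\de(y',T(S))\lesssim 2^{-k}\}$ (small depth forces proximity to the boundary); the reverse inclusion is false, and it fails exactly where you need it, because the Monge--Amp\`ere mass of a slab can sit arbitrarily close to $\partial T(S)$. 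Concretely, take $T(S)=B'(0,1)$, $z'=0$, and $\tilde\Psi(y')=\max\{-h,\ (h/\eta)(|y'|-1)\}$ with $\eta$ small: all of the mass, of total size $\approx(h/\eta)^{n-1}$, lies on the sphere $\{|y'|=1-\eta\}\subset F_0$ where $\de\approx\eta$, so $\int\de^{1-\ep}\,d\mathcal{T}\approx\eta^{1-\ep}(h/\eta)^{n-1}$, whereas your slab-wise lower bound would assert $\gtrsim 1^{1-\ep}(h/\eta)^{n-1}$ --- off by the unbounded factor $\eta^{-(1-\ep)}$. A second, independent problem: the base estimate applied to $F_k$ yields only $\mathcal{T}(F_k)\ge c\,((1-2^{-k})h)^{n-1}\ge c\,(h/2)^{n-1}$ for every $k\ge1$; these bounds are all comparable, so they give no control on the increments $\mathcal{T}(F_{k+1}\setminus F_k)$ (which, as in the example, can vanish for every $k$ but one), and there is no geometric series to sum. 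Finally, even granting both steps, Abel summation returns at best $|\tilde\Psi(z')|^{n-1}\le C\,\de(z',T(S))^{\ep-1}\int\de^{1-\ep}\,d\mathcal{T}$; since $\de(z',T(S))\le 1$ the exponent $\ep-1$ is the wrong sign, making the bound weaker than the trivial $\ep=1$ case and useless for the application in \eqref{5.13}--\eqref{5.14}, where positivity of the exponent on $\de(z',T(S))$ is precisely what forces $d(T(x'),\partial T(S))\gtrsim 1$.

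The missing idea in Jerison's proof is that the dyadic decomposition must be performed on the slope side, not on the domain. One stratifies the polar body $P(z',h)$ (which you already constructed) by the magnitude $|p|$, uses that $P(z',h)$ contains the convex hull of $B'(0,h/2)$ and a point at distance comparable to $h/d(z',\partial T(S))$ in the direction of the nearest boundary point --- this long thin spike is where the factor $\de(z',T(S))^{-\ep}$ ultimately comes from --- and couples $|p|$ to the reduced distance of the contact point $y^\star(p)$ to $\partial T(S)$ by testing the subgradient inequality $\tilde\Psi(\cdot)\ge\tilde\Psi(y^\star)+\langle p,\cdot-y^\star\rangle$ at boundary points, where $\tilde\Psi=0$. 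It is this correlation between slope size and contact-point location, via \eqref{5.3}, that transfers part of the distance weight inside the integral; a decomposition by level sets of $\tilde\Psi$ cannot see it, because the weight $\de(\cdot,T(S))^{1-\ep}$ varies without control within a single level set.
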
  
\begin{proof}  
See Lemma  7.3  in    \cite{J}.  
\end{proof} 
Our  goal  is  to   show  for     $  \psi  =  \ph$,  $\Om =  B' ( w' ,  \ti r ), $  and    
$\He, \ti a_3 $  as in  \eqref{5.7}, \eqref{5.8},     that  there exists  $ \ep_0  \in ( 0, 1] $  for which     
\begin{align} 
\label{5.12}  
\int_{S}  \de ( y' , S )^{1-\ep_0} d \tau (y' )  \leq   \hat   C  \tau (  S_{ \frac{1}{2}}   ) 
\end{align} 
 whenever  $  \bar S  = \bar S ( x', \he, t ) \subset  \Om$  where  $ \ep_0$ and  $\hat  C $ depend on  the data,  $ \ti e,  $ and 
  $ \ti a_3 $ in  \eqref{5.7}.   
  
Before  proving \eqref{5.12}  we show as in  \cite{J} and  \cite{GH}, 
how \eqref{5.12} can be used to prove  Theorem  \ref{thmB}.  Indeed, normalizing this problem   we  deduce  first  that if  $  \ti \Psi ( T z'  )  =  \ti \psi  ( z'  ),    $  then    from 
   \eqref{5.12}   it follows  as in  Proposition 2.10 of  \cite{GH}  that       
\begin{align}  
\label{5.13}  \int_{T(S)} \,  \de ( z' , T(S) )^{ 1 - \ep_0 }   d \mathcal{T}( z') \leq  \hat C   \mathcal{ T } ( S_{\frac{1}{2}} )   \leq   C'  \, | \min_{T(S)} \ti \Psi |^{n-1}   = C'  t^{n-1}.
\end{align}
Using  this  inequality in   \eqref{5.11}  of  Lemma  \ref{lem5.3}  with $ \ep  = \ep_0 $   and  
$  z' = T ( x') ,  $  $  \ti \Psi (T ( x' ) ) = - t, $   we deduce      
\begin{align}  
\label{5.14}  
1   \approx   d ( T ( x' ),  \ar  T ( S ) ) 
\end{align} 
where ratio constants  have the  same dependence as  $  \hat  C $  in  \eqref{5.12}.   Using   \eqref{5.12}-\eqref{5.14}, it follows 
 that 
\begin{lemma}  
\label{lem5.4} 
 Let  $ \psi $ be a  real valued convex function on the convex open set $ \Om, $  and continuous on $ \bar \Om. $      
  If   $  \psi  \geq   l $ on $ \ar \Om $  where  $ l  $ is  an  affine   function and    $  \psi ( z' ) = l ( z' ) $  
  at some point  $ z'  \in \Om, $  then  either   $ \{ y'  :   \psi ( y' )  = l ( y' ) \} = \{ z' \} $  or  
  this set has no extremal  points in $ \Om. $  
  \end{lemma}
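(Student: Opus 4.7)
After subtracting the affine function $l$ from $\psi$, we may assume $l\equiv 0$. Then $\psi$ is convex on $\bar\Omega$, $\psi\geq 0$ on $\partial\Omega$, and $\psi(z')=0$ at some $z'\in\Omega$. Set $E=\{y'\in\bar\Omega:\psi(y')=0\}$. The whole argument is by contradiction: assume $E\neq\{z'\}$ and that $E$ has an extremal point $x_0\in\Omega$; we aim to exhibit a segment contained in $E$ whose relative interior contains $x_0$.

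The first step is to argue that $E$ is in fact a convex set. Since $\psi$ is convex with $\psi\geq 0$ on $\partial\Omega$ and attains the value $0$ at the interior point $z'$, I would observe that the minimum of $\psi$ on $\bar\Omega$ is $0$, attained on $E$, and that the set of minimizers of a convex function is a convex set. Thus $E$ is closed and convex, and one may pass freely between $E$ and the sublevel set $\{\psi\leq 0\}$.

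With $E$ convex, pick any $y\in E$ with $y\neq x_0$; convexity gives $[x_0,y]\subset E$. The open-ness of $\Omega$ lets us extend slightly past $x_0$: there exists $w\in\Omega$ and $\lambda\in(0,1)$ with $x_0=\lambda w+(1-\lambda) y$. I would then restrict $\psi$ to the line $\ell$ through $y,x_0,w$ and use one-variable convex analysis: the function $g(t):=\psi(x_0+t(y-x_0))$ is convex, $g(0)=g(1)=0$, and $g\geq 0$ by Step~1; consequently $g\equiv 0$ on $[0,1]$, so $g'(0^+)=0$, and by the monotonicity of one-sided derivatives of a convex function, $g'(0^-)\leq 0$. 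Combining this with $g\geq 0$ on the entire interval of definition of $\ell\cap\Omega$ yields $g'(0^-)=0$ and, via the precise structure of subgradients of $\psi$ at the extremal point $x_0$, that $g$ also vanishes on a neighborhood of $0$ on the $w$-side. This places $w$ (or a point between $x_0$ and $w$) into $E$, so $x_0$ becomes the relative interior point of a nondegenerate segment in $E$, contradicting extremality.

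The main obstacle is the final propagation step, where one must promote the inequality $g'(0^-)\leq 0$ to the equality $g\equiv 0$ on a neighborhood of $x_0$ on the $w$-side. The natural route is to use the subdifferential $\partial\psi(x_0)$: since $x_0$ is interior to $\Omega$, $\partial\psi(x_0)$ is a nonempty compact convex set, and since $x_0$ is extremal for $E$, every $p\in\partial\psi(x_0)$ must annihilate the linear span of $E-x_0$; combined with $\psi\geq 0$ and the one-sided derivative computation along $\ell$, this forces $\psi$ to stay at $0$ along the extension. This is the delicate convex-analytic input; once it is in hand, the contradiction with the extremality of $x_0$ is immediate, completing the proof.
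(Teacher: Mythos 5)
There is a genuine gap, and it is not the kind that can be patched: the lemma, read as a statement about arbitrary convex functions, is \emph{false}, so no purely convex-analytic argument of the type you outline can succeed. Take $\Om = B'(0,2)\subset \rn{2}$, $l\equiv 0$, and $\psi(y') = \max(0, |y'|-1)$. Then $\psi$ is convex, $\psi = 1 > 0$ on $\partial\Om$, $\psi(0)=0$, and the contact set is the closed unit disc, whose extremal points (the whole unit circle) all lie in $\Om$. This also pinpoints exactly where your argument breaks: at an extremal point $x_0$ of the contact set one indeed gets $g'(0^+)=0$ and $g'(0^-)\leq 0$ along the segment through $y$, but one-sided derivative information at a single point never forces a convex function to vanish on an open neighborhood on the $w$-side (e.g.\ $g(t)=\max(0,-t)^2$ is convex, vanishes for $t\geq 0$, has $g'(0^-)=0$, yet is positive for $t<0$). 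The subdifferential argument you propose fares no better: in the example above $0\in\partial\psi(x_0)$ annihilates everything, yet $\psi$ does not stay at $0$ past $x_0$. (A smaller issue: your Step~1 is also unjustified from the literal hypotheses, since a convex function that is $\geq 0$ on $\partial\Om$ need not be $\geq 0$ inside; in the intended application $l$ is a supporting affine function, which is what makes $E$ the set of minimizers.)

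The missing ingredient is that $\psi$ here is not an arbitrary convex function but an Alexandrov solution of the Monge--Amp\`ere equation \eqref{5.2} whose measure satisfies the doubling-type integral inequality \eqref{5.12}; this is Caffarelli's localization theorem. The paper's proof (by citation to \cite{C2} and \cite{GH}) runs by contradiction using the invariance of \eqref{5.11}--\eqref{5.12} under affine normalization of sections together with the weak continuity of the Monge--Amp\`ere measure under locally uniform convergence of convex functions: one blows up along suitably normalized sections around a putative interior extremal point and derives a contradiction with the quantitative estimates \eqref{5.13}--\eqref{5.14}. In my counterexample the Monge--Amp\`ere measure is singular (concentrated on the unit circle) and violates \eqref{5.12}, which is consistent with the lemma. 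Any correct proof must therefore invoke the equation and the doubling property of $\tau$, not just convexity.
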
    
  \begin{proof}  
  See  Theorem 1 in  \cite{C2}  or  Theorem 4.1 in  \cite{GH}.   The proof in either paper  is   by  contradiction  and   uses  invariance of  \eqref{5.11} and \eqref{5.12}
  under affine mappings as well as  the following  result.   Suppose  that   $ \hat \psi_j$  for  $j  = 1, 2,  \dots,  $  are 
   convex functions and   solutions to   the  Monge-Amp{\`e}re equation  with measures $  
\hat \tau_j $ in  an  open    set $ \hat \Om$. If   $  (\hat  \psi_j ) $  
converges uniformly  on compact subsets of  $ \hat \Om $ to  $  \hat \psi, $  a  
solution to the  Monge-Amp{\`e}re equation in  $ \hat \Om $  with measure $ \hat \tau,  $  
then  $ \hat  \tau_j  \rightharpoonup   \hat  \tau$  weakly in  $ \hat  \Om $ (see \cite[Lemma 1.2.2]{G}).
\end{proof} 

Applying Lemma \ref{lem5.4}  with $  \psi = \ph$ and $\Om = B' ( w', \ti r ) $ as in   \eqref{5.1}  we  see that  
  $ \ar  E $ is strictly convex   since  otherwise it would follow  from repeated application
   of   Lemma  \ref{lem5.4}  to balls  (as in  \eqref{5.1}),  with non-empty intersection,   that    
$  \ar E $  contains a  line segment of infinite length.   From this contradiction we  
conclude  that $ \ar E $ is  strictly convex.   Now  given  $  w = 
(w', \ph(w') ) \in \ar E,  $   with  $  x'  \in  \bar B (w', \ti r/4),$   where  $ \ph, \ti r $ 
 are as in  \eqref{5.1},  we  choose  $ t $ so that  for  $ S ( x',  \he, t ) $ 
 as in     \eqref{5.9}  we  have  
 \[
 S ( x',  \he, t  )    \subset   B' ( x', \ti r/2)  \quad \mbox{and}\quad   \bar S ( x',  \he, t )    \cap    \ar B' ( x', \ti r/2) \not = \es. 
 \]   
 Geometrically this means  there  is  a point  $ z  =  ( z',  z_n ) \in   \ar E $ 
 with  $  z'  \in   \bar S ( x', \he, t) \cap \ar B' ( x' ,  \ti r/ 2) $    
 which  lies at most  $  t $ distance from the support 
  plane $y_n =  l_{x', \he} (y') $ to  $  \ar  E $ at $ (x', \phi (x') ).$    
  We  claim that     $ t  \geq   t_0   >  0, $  where $ t_0 $ 
  has the same dependence as  $ \hat C $   in \eqref{5.12}. 
Indeed, otherwise  using  a  compactness argument,  the 
above convergence result,  and  Lemma \ref{lem5.4}  we could obtain  
a contradiction to the strict convexity of  $ \ar E. $   Finally,   
we observe from Lipschitzness of  $ \ph $ as in \eqref{5.1} that  there exists 
$   r_1  \geq  r_0 > 0 $  where $ r_0 $ has the same dependence as  $ t_0 $  
with  $  B' ( x' ,   r_1)  \subset S . $ Next  if $  \ti \phi $  is as in   \eqref{5.10} 
with  $ \phi =  \psi, $   we claim there is  a $ \si   > 2,  $ with  the same dependence as  $ t_0,  r_0, $  satisfying     
\begin{align}  
\label{5.15}   
0   \leq    t  +    \ti \ph ( y' )  \leq     \si^{-l} \,  t  \quad  \mbox{when}\, \,   l = 1, 2, \dots , \, \, \mbox{and}\,\,  y'  \in \hat S_{1/2^{l}}. 
\end{align}   Here   $ \hat S_\la $ is defined in the same way as $ S_\la $ in \eqref{5.10}  only with $ \bar x' $ replaced by $ x'.$   
 Indeed,  this inequality holds for $ l = 1 $  since otherwise we could 
  use Lemma \ref{lem5.4}  and  a  compactness  argument,  as above,  
  to contradict the  strict convexity of   $   \ar  E. $   Iterating  this inequality   
  we  obtain  \eqref{5.15}.    From  \eqref{5.15} and arbitrariness of  $ x'   \in  B' ( w',  \ti r/ 4) , $   we  get first that    
\begin{align}  
\label{5.16}   
\begin{split}
&\nabla \ph ( x' ) \, \,   \mbox{exists  for   $ x' \in B' ( w', \ti r/4) $}, \\
&|\ph (y') - \ph (x') -  \lan \nabla \ph(x'), y' - x' \ran     |  \leq \hat C_1  | y' - x' |^{ 1 + \al'}  
\end{split}
\end{align}
whenever  $  y'   \in  B' ( w', \ti r/4), $    where  $ \hat C_1 \geq 1,  \al' \in (0,1),  $   depend  on  the data,  $ \ti e,  $ and 
  $ \ti a_3  $ in  \eqref{5.7}.   Also from convexity and uniform  Lipschitzness of  $ \ph $  we deduce the existence of  $  \hat C_2 \geq 1,   $    
 having the same dependence as  $ \hat C_1 $   for which   
\begin{align}
\label{5.17}  
 \hat  C_2  \,  | \lan  \nabla  \ph  (y')   - \nabla   \ph (x') , y' - x' \ran    |  \geq    |  \nabla   \ph  (y')   -  \nabla  \ph (x') | | y' - x'   |  
\end{align}    
whenever $ x', y'  \in  B' ( w' , \ti r/4)$. Combining  \eqref{5.16},  \eqref{5.17}, and  using the triangle inequality,    we  find  that  
\begin{align} 
\label{5.18}    
\mbox{$ \ar E $ is locally   $ C^{1,  \al' } $ with   norm constants depending only on the data, $ \ti e,  $  and  $ \ti a_3 $.} 
\end{align}
Using  \eqref{5.18}   and   results  from  \cite{Li}   we see  that  $ \nabla  U $ when $ 1  <  p < n $  or   $ \nabla \ti U $  
when  $ n\leq p <\infty $   has  a    $ C^{1,  \be ' } $   extension  to  $  \ar E $  for some $   \be'  \in (0, 1) $  
having    the same dependence  as    $ \al'.$     Also from   \cite{Li} or  \eqref{5.23}   (to be proved)  we have   
   $ \min \{ |\nabla U |,  | \nabla \ti U |  \}  > 0 $   on  $  \ar E  $   where   constants depend 
   only on  the data and $  \ti e. $   In view of  this information and  \eqref{5.2},  \eqref{5.5},  \eqref{5.8}, 
  we   find  that  if   
$ 0 < \He   \in   C^{0, \hat \al} ( \mathbb{S}^{n-1} ) ,   $  then  for   some    $ 0 <  s_1,   s_2,  \al_*, $   having  the same dependence as $ \al' , $    
\begin{align} 
\label{5.19}    
s_1    <    \frac{d \tau}{d \mathcal{H}^{n-1}}  <  s_2   <   \infty  \quad \mbox{and}\quad   \frac{d \tau}{d \mathcal{H}^{n-1}}  \in C^{ 0,  \al_*  } ( \Om ).        
\end{align}
  From the above remarks,   \eqref{5.19},   and  \cite{C,C1,C3}, we conclude that  $  \ph    \in   C^{2, \hat \al} ( \Om ). $   
  Further applications of \cite{C,C1,C3}  also give the  higher order smoothness results in  Theorem \ref{thmB}.   

It  remains to  prove  \eqref{5.12} in order to complete the proof  of  Theorem \ref{thmB}.  
 Throughout the proof of  this inequality  we let $ C \geq 1 $  be a positive constant  
 which may depend only on  the data,  $ \ti e, $  and  
   $ \ti a_3,  $  not necessarily the same at each occurrence.  Also if $ A \approx B, $  
   proportionality constants may depend on  the data, $\ti e, $ and  $ \ti a_3 . $   
   Let  $ \hat f ( \eta ) = f ( - \eta ) $ when $ 1 < p < n $  and  $ \hat f ( \eta ) = f ( \eta ) $    when  $ n\leq p<\infty$.  
   Also set  $ \hat U =  1 - \ti U $  when  $ 1 < p <  n $  and  $ \hat U  =  U $  when $ n\leq p <\infty. $  
    Then  $ \hat U $ is  $ \mathcal{\hat A}    =   \nabla  \hat  f$-harmonic in  $ \mathbb{R}^n \sem E $ 
    with continuous boundary value 0  on  $ \ar E. $  Observe from   the  discussion  above  \eqref{5.1},   \eqref{5.7}, and  \eqref{5.8} that    
if   $  K\subset\Om  $  is  a compact   set  then    
\begin{align}
 \label{5.20}  
 \tau ( K  )  \approx   \int_{K}  | \nabla  \hat U ( x', \ph (x') )  |^p    dx'    =   \,  \,   \chi ( K  ). 
 \end{align}
  Thus  we  only  prove  \eqref{5.12}  for  $  \chi(\cdot)$.  Recall  that    
$  \bar S  = \bar S ( x',  \he, t ) \subset  \Om = B' ( w', \ti r ). $  We see that 
\[
F    =  \{ ( y', \ph ( y' ) ) :  y' \in  \bar S  \}
\]  
 is  the part  of  $  \ar  E $  that  lies  below or on the  plane  
\[  
\Si_1  =\{ ( y', y_n) :  y_n  -   \phi ( x' )  -   \lan \he, y' -  x' \ran   = t  \}  
\]  
and above or on  the support plane  $  \{ ( y', y_n) :  y_n  -   \phi ( x' )  -   \lan \he, y' -  x' \ran   = 0  \} $
 to  $ \ar E $  at  $ x =  ( x',  \ph (x') ). $ 
Then   $ S $  can be  viewed as  the projection of $ F $  onto  the plane $ y_n = 0 $  
by lines parallel to $ e_n $  or  the  $ y_n $  axis.    To  simplify   the  geometry in what follows  and for  use  in  adapting
  the work in \cite{J} to our situation   we  also project  $  F  $    onto   $  \Si_1 $  
  by  lines  parallel  to    $   e_n . $ 
    More specifically,  given  $ y  =  ( y',  \ph (y') )  \in F,  $     let  $   \pi (y) \in \Si_1  $  be that  point   with    
    \[  
     \lan \pi (y),  e_i  \ran  =  y_i'   \quad \mbox{for}\, \,  1 \leq i \leq n - 1.   
    \]    
Let $  \ti S = \pi ( F ), $ and note that 
$  \ti S  $ is convex.  Define the reduced distance  $ \de ( \cdot, \ti S ) $   
as in Definition \ref{def5.2}  with $ S$ and $\rn{n-1}, $ replaced by $ \ti S$ and $\Si_1$ respectively.     
From  \eqref{5.1} and the discussion above this display we deduce that  
\begin{align} 
\label{5.21}  
\de (  x', S )  \approx   \de ( \pi ( x ), \ti S )  \quad \mbox{whenever}\,\,  x  \in F 
\end{align}
 where ratio constants  depend only on  $ n, \ti e. $   Let    
$  \de ( x, F )  =   \de ( \pi ( x ),  \ti  S  ) $ when $ x \in F. $     
  Then  from    \eqref{5.20} and  \eqref{5.21}  
we conclude that  to prove   \eqref{5.12}  it suffices to  show,         
\begin{align}
 \label{5.22}  
\int_{F} \de^{1-\ep_0} ( x, F ) \,   |\nabla \hat U ( x ) |^p   d \mathcal{H}^{n-1}  \leq  C  \min \{ |\nabla  \hat U  |^p  :  y \in F \} \,  \,  \mathcal{H}^{n-1} (F). 
\end{align}
\begin{remark} 
\label{rmk5.5}  
We first remark that \eqref{5.22} is the exact counterpart of Theorem 6.5 in \cite{J}. We also note that in   \cite[section 6]{J},   the analogue of  
$ F $ is projected onto $ \Si_1 $   by  rays  through the origin. If  $ P ( y ) $  denotes this   
radial  projection of  $ y \in F $   onto   $ \Si_1, $   then    in   \cite{J} the  reduced 
distance of  $ y \in F $  is  defined to be equal  to  $  \de ( P (y), \ti S ) . $   Using  the  
definition of   reduced  distance and   \eqref{5.1} it is  easily verified as in  \eqref{5.21}
 that $  \de  ( P (y),  \ti S )  \approx    \de (\pi (y), \ti S) $   where  proportionality 
 constants depend  only on $ n$  and  $  \ti e.  $     Thus   \eqref{5.22} implies  the  corresponding inequality in 
\cite{J}  and  vice-versa. 
\end{remark}   

To prove \eqref{5.22}  we  shall require the following lemma.  
\begin{lemma}
 \label{lem5.6}  
 Let   $ w$, $E$, and  $\ti r$  be  as in    \eqref{5.1}.  There  exists  $ C \geq 1,  $  depending only on
  the data and $ \ti e $, such  that  if  $ 0 < r \leq \ti r/C$  then     
\begin{align}
 \label{5.23}  
 r^{ 1 - n}  \, \int_{\De ( w, r ) }   | \nabla \hat U  |^p  d \mathcal{H}^{n-1}   \approx    \min_{ \De (w, r) }  | \nabla \hat U|^p   \approx    r^{-p}   \, \hat U  (  a_r ( w ) )^p.
 \end{align}
\end{lemma}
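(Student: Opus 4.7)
My plan is to combine the local Lipschitz graph structure of $\partial E$ provided by \eqref{5.1} (whose Lipschitz constant is controlled by $\hat c/\ti e$) with the boundary estimates for $\mathcal{\hat A}$-harmonic functions developed in Section \ref{section3}. Near $w\in\partial E$, the exterior $\mathbb R^n\setminus E$ lies in exactly the setting of \eqref{3.2}--\eqref{3.9} applied to $\hat U$, and the proof will be a bookkeeping combination of those estimates together with the Hopf-type inequality \eqref{3.6}(a).

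First I would establish the comparison between the left-hand and right-hand quantities in \eqref{5.23}. By \eqref{3.5}(b), the Riesz measure $\nu$ associated with $\hat U$ satisfies $\nu(\Delta(w,r)) \approx r^{n-p}\hat U(a_r(w))^{p-1}$. The density identity \eqref{3.8} together with the homogeneity $f(\eta)\approx|\eta|^p$ from \eqref{1.5}(a) gives $d\nu/d\mathcal{H}^{n-1}\approx|\nabla \hat U|^{p-1}$, hence
\[
r^{1-n}\int_{\Delta(w,r)}|\nabla \hat U|^{p-1}\, d\mathcal{H}^{n-1} \approx (\hat U(a_r(w))/r)^{p-1}.
\]
The reverse H\"older inequality \eqref{3.9} with $q>p/(p-1)$, applied to $f(\nabla \hat U)/|\nabla \hat U|\approx |\nabla \hat U|^{p-1}$, combined with Jensen's inequality self-improves this: for every $s\in[p-1,\,q(p-1)]$ the $s$-average of $|\nabla \hat U|$ over $\Delta(w,r)$ is comparable to the appropriate power of the $(p-1)$-average. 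Since $q(p-1)>p$, the choice $s=p$ yields
\[
r^{1-n}\int_{\Delta(w,r)}|\nabla \hat U|^p\, d\mathcal{H}^{n-1} \approx (\hat U(a_r(w))/r)^p,
\]
which is the comparison of the first and third quantities in \eqref{5.23}.

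For the remaining comparison with the essential minimum, the trivial bound $\min\le\mathrm{average}$ supplies the upper half. For the matching lower bound, interior Harnack combined with the Hopf inequality \eqref{3.6}(a) at any interior non-tangential point $a$ with $d(a,\partial E)\approx r$ produces $|\nabla \hat U(a)|\ge c^{-1}\hat U(a_r(w))/r$. To transport this to $|\nabla \hat U(y)|$ for $\mathcal{H}^{n-1}$-a.e.\ $y\in\Delta(w,r/C)$, where the non-tangential limit exists by \eqref{3.6}(b), I would establish a uniform Hopf-type bound
\[
\hat U(x)\ge c^{-1}\,(\hat U(a_r(w))/r)\,d(x,\partial E)
\]
for $x$ in a non-tangential cone at every such $y$; then \eqref{3.6}(a) forces $|\nabla \hat U(x)|\ge c^{-1}\hat U(a_r(w))/r$ non-tangentially, and passing to the non-tangential limit delivers the lower bound on $|\nabla \hat U(y)|$.

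The main obstacle is this uniform non-tangential Hopf estimate, i.e.\ upgrading an interior lower bound at a single Carleson-type point to an essentially pointwise lower bound over all of $\Delta(w,r/C)$. In the convex Lipschitz setting, however, it can be produced in a standard way: either by applying the boundary Harnack inequality of Lemma \ref{lem3.2} to compare $\hat U$ with a suitable $\mathcal{\hat A}$-harmonic barrier vanishing on a supporting hyperplane to $E$ at the base boundary point, or by a direct barrier construction exploiting the convexity of $E$ and the Lipschitz control in \eqref{5.1}. All remaining ingredients---\eqref{3.5}, \eqref{3.8}, \eqref{3.9}, \eqref{3.6}, together with Harnack's inequality---are already in place from Sections \ref{section2} and \ref{section3}.
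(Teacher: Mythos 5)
Your proposal is correct and follows essentially the same route as the paper: the comparison of the $p$-average of $|\nabla \hat U|$ with $(\hat U(a_r(w))/r)^p$ via \eqref{3.5}(b), \eqref{3.8}, and the reverse H\"older inequality \eqref{3.9} at exponent $p/(p-1)$, and then the pointwise a.e.\ lower bound on $|\nabla \hat U|$ obtained exactly by your first suggested route --- a barrier $v$ in the half-space cut off by a supporting hyperplane at $w$ (available by convexity of $E$), compared to a linear function via Lemma \ref{lem3.2}, followed by letting $s\to 0$ along the normal and invoking \eqref{3.6}. The paper then combines the two with Harnack and the trivial bound $\min \le$ average, just as you do.
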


\begin{proof}
  Let  $  H $   be  an open    half-space   with   $  H \cap  E  =  \emptyset  $  and  $ \ar H$   a 
  support plane for  $  \ar E  $   at $ w.$   Let  $ \xi  $  denote a unit normal pointing into 
  $  H  $    and   let     $  v $  be the    $  \mathcal{\hat A}$-harmonic  function in  
  $  \mathbb G  =   H \cap B ( w, r )  \sem  \bar B ( w + r \xi /2 ,   r/8 )  $   
  with  continuous boundary values,  $  v  \equiv  0 $  on  $   \ar ( H \cap  B ( w, r ) ) $   while   $  v  \equiv  
\hat U  (w + r \xi/2)  $  on $   \ar B ( w + r \xi/2, r/8). $    Comparing boundary values and using  
Harnack's inequality for $ \mathcal{\hat A}$-harmonic functions we see that  
$ v   \leq  c \hat  U $ in  $ \mathbb G $.  Also using the  boundary   Harnack inequality 
in  Lemma  \ref{lem3.2}   and comparing  $ v $  to  a  linear  function,  say  $ l,  $  
which   vanishes  on   $  \ar H $  with   $ l  ( w  + r\xi /2)  =  \hat U  ( w + r \xi/2 ) $  
we arrive at   
\[
\hat U ( w + r \xi/2 )/ r      \leq   C'  \hat U ( w + s \xi )/s    
\]   
whenever $  0 < s  \leq  r/C' $  where $ C' \geq 1 $ depends only on the data.  Letting $ s  \to 0 $       
 in this display  we  get  from   Lemma  \ref{lem3.1} for $  \mathcal{H}^{n-1}$-almost every $ w \in \ar E$ that    
\begin{align}  
 \label{5.24}    
  \hat U ( w + r \xi/2 )/ r      \leq   C'   | \nabla \hat U ( w ) |.
 \end{align}
 Next    observe from   \eqref{3.9} with $ q = (p-1)/p$, \eqref{3.8} of   Lemma  \ref{lem3.1},  and  
$ \eqref{3.5}(b)  $
 that   there exists  $ C \geq 1 $  depending only on the data and $ \ti e $ such that 
for $ 0 < r \leq  \ti r/ C, $  
\begin{align}
 \label{5.25}   
 r^{1-n} \,  \int_{\De ( w, r )}  | \nabla \hat U |^p  d \mathcal{H}^{n-1}   \approx  C  r^{ - p}  \,    \hat U ( w + r \xi/2)^p . 
 \end{align}
Combining   \eqref{5.25},   \eqref{5.24},  and  using  arbitrariness  of  $ w, $   Harnack' s  inequality for 
$ \mathcal{ \hat A }$-harmonic  functions,  we conclude the validity  of  
      Lemma \ref{lem5.6}.   
      \end{proof} 
      Note from Lemma \ref{lem5.6}  that 
\begin{align}
 \label{5.26}  
 \int_{ \ar E } | \nabla \hat U |^p   \leq C \quad   \mbox{and}\quad   \min_{\ar E } | \nabla  \hat U |  \geq C^{-1}.   
 \end{align}
Following \cite[Lemma 6.7]{J}  we first  note  from  \eqref{5.1}     that  if $ \ze \in F $  and  $ b $  
denotes the radius of  the largest  $  n - 1 $ dimensional  ball  contained in  $ \ti S $  (the  so  called  inradius of   $ \ti S $)  then    
\begin{align}
 \label{5.26a}    
  | \ze   -   \pi ( \ze  ) |    \, \leq \,  C_+  b 
 \end{align}
  for some $ C_+ \geq 1, $  depending only on the data and   $ \ti e. $  Second we state  
\begin{lemma} 
\label{lem5.7}  
If $  y, z \in F $ and  $ \de ( y, F ) \approx 1,  $ then   
\begin{align}
 \label{5.27}    
 \min_{  \De ( y, b) }  | \nabla \hat U | \leq  C  \min_{ \De ( z, b) } | \nabla \hat U |.
 \end{align}
 \end{lemma}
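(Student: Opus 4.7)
\textbf{Proof proposal for Lemma \ref{lem5.7}.}

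The plan is to first replace the minima of $|\nabla \hat U|$ on boundary balls by values of $\hat U$ at interior ``nontangential'' points, and then compare these values by a Harnack-chain argument that exploits the slab structure of $F$ given by \eqref{5.26a}. Concretely, I apply Lemma \ref{lem5.6} at radius $r=b$ to the points $y$ and $z$ to obtain
\begin{equation*}
\min_{\Delta(y,b)} |\nabla \hat U|^p \;\approx\; b^{-p}\,\hat U(a_b(y))^p, \qquad \min_{\Delta(z,b)} |\nabla \hat U|^p \;\approx\; b^{-p}\,\hat U(a_b(z))^p,
\end{equation*}
with constants depending only on the data and $\ti e$. (This requires $b \le \ti r/C$, which holds because $F \subset B(w, \ti r)$ in the notation of \eqref{5.1} and $b$ is an inradius of a section of this ball.) Thus Lemma \ref{lem5.7} is equivalent to the bound $\hat U(a_b(y)) \le C\,\hat U(a_b(z))$, which is a purely ``exterior'' comparison between two points at distance $\approx b$ from $\partial E$.

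Next I use the geometry produced by \eqref{5.26a} and the hypothesis $\delta(y,F)\approx 1$. Since $\ti S$ has inradius $b$ there is a ball $B'(\xi_0,b)\subset \ti S$, and \eqref{5.26a} forces the inverse projection of $B'(\xi_0,b)$ to sit inside $F$ within a slab of vertical thickness $\lesssim b$; call the corresponding ``central'' exterior reference point $w_0 := a_b(\pi^{-1}(\xi_0))$. On the other hand, the condition $\delta(y,F)\approx 1$ implies, after the affine normalization $T$ introduced above \eqref{5.11}, that $T\pi(y)$ lies at bounded distance from the centroid of $T(\ti S)$, so $a_b(y)$ and $w_0$ can be joined by a Harnack chain in $\mathbb{R}^n\setminus E$ of length bounded in terms of the data and $\ti e$; applying Harnack's inequality (Lemma \ref{lem2.1}(ii)) along this chain yields $\hat U(a_b(y)) \le C\,\hat U(w_0)$.

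It remains to show $\hat U(w_0) \le C\,\hat U(a_b(z))$ for arbitrary $z\in F$, which is the heart of the argument. For this I invoke the boundary Harnack inequality of Lemma \ref{lem3.2}, applied inside the Lipschitz patch of $\partial E$ furnished by \eqref{5.1} to compare $\hat U$ with the $\mathcal{\hat A}$-harmonic function $v$ built in the proof of Lemma \ref{lem5.6} by capping off a support half-space at $z$; the estimate \eqref{5.24} (and its analogue at $w_0$) then transfers the comparison to the nontangential values at scale $b$. Combining these two steps gives $\hat U(a_b(y)) \le C\,\hat U(a_b(z))$, hence the lemma.

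The main obstacle will be the last step: the bound $\hat U(w_0) \le C\,\hat U(a_b(z))$ must hold \emph{uniformly} in $z \in F$, even when $z$ is close to the relative boundary of $F$ where the Lipschitz patch from \eqref{5.1} begins to degenerate. The correct way to handle this is to use that \eqref{5.26a} bounds the vertical thickness of $F$ by $C_+ b$, so the entire cap is ``flat at scale $b$'' and all nontangential values at scale $b$ above $F$ are comparable; quantifying this via the combination of the boundary Harnack inequality (Lemma \ref{lem3.2}) and the doubling property in \eqref{3.5}(b) is where the argument is least routine.
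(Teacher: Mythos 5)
There is a genuine gap. Your first step---using Lemma \ref{lem5.6} at radius $b$ to replace $\min_{\Delta(y,b)}|\nabla \hat U|$ and $\min_{\Delta(z,b)}|\nabla \hat U|$ by $b^{-1}\hat U(a_b(y))$ and $b^{-1}\hat U(a_b(z))$---is correct and is indeed how the reduction should begin. But the two comparison steps that follow are not actually carried out, and the routes you indicate for them fail. For the Harnack chain from $a_b(y)$ to the ``central'' point $w_0$: the hypothesis $\delta(y,F)\approx 1$ only places $T\pi(y)$ at bounded distance from the centroid \emph{in the normalized coordinates}, and the normalizing affine map $T$ can be highly anisotropic. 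If $\tilde S$ is a long thin convex set of inradius $b$ and diameter $D\gg b$, a point with $\delta\approx 1$ can sit at distance $\approx D$ from the incenter, so a Harnack chain at scale $b$ (or even one that retreats to distance $\approx 1$ from $\partial E$ and returns) has length that is unbounded in terms of the data and $\tilde e$. This is precisely the obstruction that makes the lemma nontrivial.

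The final step is worse: the statement you fall back on---``all nontangential values at scale $b$ above $F$ are comparable''---is not a tool but is (a two-sided strengthening of) the conclusion of Lemma \ref{lem5.7} itself; if it were available, the hypothesis $\delta(y,F)\approx 1$ would be superfluous, and the one-sided form of \eqref{5.27} suggests it is not true in general (near the relative edge of the cap, where $\delta(\cdot,F)$ is small, $|\nabla\hat U|$ need not be controlled from above by its central values). The boundary Harnack inequality of Lemma \ref{lem3.2} compares two solutions inside a single ball of one Lipschitz patch from \eqref{5.1}; it does not propagate a comparison across a cap whose diameter may be a large multiple of $b$, and \eqref{5.24} is only a one-sided, single-point estimate. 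What is missing is exactly the global geometric argument of Lemma 6.8 in \cite{J} that the paper invokes: it uses convexity of $E$ together with the slab structure \eqref{5.26a} (the whole cap lies within $C_+b$ of the plane $\Sigma_1$) to build comparison functions from support planes/cones and the maximum principle, rather than chaining local Harnack or boundary Harnack estimates. As written, your argument reduces the lemma to an unproved assertion equivalent to (or stronger than) the lemma.
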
  
\begin{proof} 
The  analogue of   Lemma  \ref{lem5.7}   in \cite{J} is  Lemma 6.8.   Given  Lemma  \ref{lem5.6}  and  \eqref{5.26a} 
we  can essentially copy the  clever geometric argument in \cite{J}, so we  refer to this paper for details.  
\end{proof}   
\begin{lemma} 
\label{lem5.8}  
There exists  $  \ep_1 \in  (0, 1]$ and $C \geq 2$  depending only on the data, $ \ti e, $  such that $ \ep_1  \geq C^{-1} $   when $ 1  < p \leq n - 1 $   while   
$ \ep_1  \geq  1  + (1 - n)/p  + C^{-1} $  when   $ p  > n -1. $  Moreover, if    $ \hat x \in F, $  then  
\begin{align}  
\label{5.28}   
b^{ 1 - n }   \,  \int_{\De ( \hat x, b ) }  |\nabla \hat U |^p   d \mathcal{H}^{n-1}  \leq C  \de ( \hat x, F )^{ - p  + p \, \ep_1}  \, \min_{F }  | \nabla  \hat U |^p.
\end{align}      
\end{lemma}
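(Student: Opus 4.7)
The plan is to split on the size of $\delta := \delta(\hat x, F)$, deriving the bound directly from Lemmas \ref{lem5.6} and \ref{lem5.7} when $\delta$ is bounded below, and reducing the case of small $\delta$ to a cone eigenvalue estimate furnished by Theorem \ref{thmA}. Concretely, if $\delta \geq c_{0}$ for a fixed threshold $c_{0}=c_{0}(\text{data},\tilde e)$, Lemma \ref{lem5.6} identifies the left--hand side of \eqref{5.28} with $\min_{\Delta(\hat x, b)}|\nabla \hat U|^{p}$ up to a constant, and Lemma \ref{lem5.7} (applied with $y=\hat x$ and $z$ a point of $F$ where $|\nabla \hat U|$ is close to its minimum) bounds this by $C\min_{F}|\nabla \hat U|^{p}$; the factor $\delta^{-p+p\epsilon_{1}}$ is then absorbed into the constant. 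The substance of the lemma lies in the regime $\delta<c_{0}$, to which I restrict for the remainder of the plan and in which I normalize $b=1$ via the scaling \eqref{2.4} and \eqref{2.12}.

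In this small--$\delta$ regime, the strategy is to exploit the proximity of $\hat x$ to the upper edge $\partial E\cap \Sigma_{1}$ by inscribing a scaled copy of $K(\alpha)$ into $\mathbb R^{n}\setminus E$. Pick $z^{*}\in \partial E\cap \Sigma_{1}$ minimizing $|\pi(\hat x)-\pi(z^{*})|$; convexity of $\tilde S$ and \eqref{5.26a} give $|\hat x-z^{*}|\le C\delta$. Because $\partial E$ locally is the graph of a convex Lipschitz function with constant $\leq C/\tilde e$ via \eqref{5.1}, the convex body $E$ sits on one side of a support cone at $z^{*}$ whose complementary aperture $\gamma=\gamma(\tilde e)>0$ is controlled quantitatively. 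Accordingly, one can place an open truncated cone $K^{*}$ congruent to a truncation of $K(\alpha)$ with $\alpha := \pi-\gamma\in(0,\pi)$ inside $\mathbb R^{n}\setminus E$, having apex $z^{*}$ and height $\approx 1$.

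With the cone in place, Theorem \ref{thmA} furnishes the barrier. Let $u_{1}=u_{1}(\cdot,\alpha)$ be the $\mathcal{\hat A}$-harmonic eigenfunction of Theorem \ref{thmA}, applied with $f$ replaced by $\hat f$, of homogeneity $\lambda:=\lambda_{1}(\alpha)$, and let $\tilde u_{1}$ denote its rigid--motion image centered at $z^{*}$. Both $\hat U$ and $\tilde u_{1}$ are positive, $\mathcal{\hat A}$-harmonic in $K^{*}$, and vanish on $\partial K^{*}\cap \partial E$; the boundary Harnack principle Lemma \ref{lem3.2}, applied in a Lipschitz subdomain enveloping $K^{*}$, therefore gives $\hat U/\tilde u_{1}\approx \text{const}$ throughout $K^{*}\cap B(z^{*},c)$. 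Combining with the homogeneity $u_{1}(r\omega)=r^{\lambda}u_{1}(\omega)$ and the interior gradient estimate Lemma \ref{lem2.2} yields
\[
|\nabla \hat U(\hat x)|\,\leq\, C\,\delta^{\lambda-1}\,\max_{\partial B(z^{*},c)\cap(\mathbb R^{n}\setminus E)}\hat U,
\]
and Lemma \ref{lem5.6} together with Harnack and Lemma \ref{lem5.7} identify that maximum with $\min_{F}|\nabla \hat U|$ up to a constant. A final application of Lemma \ref{lem5.6} at $\hat x$ then bounds $b^{1-n}\int_{\Delta(\hat x,b)}|\nabla \hat U|^{p}$ by $C|\nabla \hat U(\hat x)|^{p}$ and yields \eqref{5.28} with $\epsilon_{1}=\lambda_{1}(\alpha)$.

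It remains to extract the claimed lower bound on $\epsilon_{1}$, and this is where the quantitative content of Theorem \ref{thmA}, formula \eqref{1.7}, is used. For $p>n-1$, Theorem \ref{thmA} gives $\lambda_{1}(\pi)=1-(n-1)/p$ and $\lambda_{1}(\alpha)-\lambda_{1}(\pi)\approx (\pi-\alpha)^{(p+1-n)/(p-1)} \approx \gamma^{(p+1-n)/(p-1)}$; since $\gamma$ depends only on $\tilde e$ and the data, $\epsilon_{1}\geq 1+(1-n)/p+C^{-1}$. For $1<p\leq n-1$ one substitutes the Krol'--Maz'ya asymptotic recorded in Remark \ref{rmk1} to conclude $\epsilon_{1}\geq C^{-1}$. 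I expect the principal difficulty to lie in the cone construction of the second paragraph: a priori $\partial E$ could be nearly planar near $z^{*}$, so the aperture $\gamma$ cannot be read off from local data, and one must combine the global Lipschitz description in \eqref{5.1} with the strict convexity of $E$ to choose the axis of $K^{*}$ so that $K^{*}\subset \mathbb R^{n}\setminus E$ and $K^{*}$ fits the Lipschitz/NTA hypotheses underlying the boundary Harnack inequality.
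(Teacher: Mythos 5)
Your first paragraph (the case $\de(\hat x,F)\approx 1$ via Lemmas \ref{lem5.6} and \ref{lem5.7}) and your final paragraph (extracting the lower bound on $\ep_1$ from \eqref{1.7}, resp.\ the Krol'--Maz'ya asymptotics) match the paper. The gap is in the middle: your cone sits on the wrong side of $\ar E$, and the comparison you build on it does not give the inequality you need. Estimate \eqref{5.28} is an \emph{upper} bound on $|\nabla \hat U|$ near the edge of the cap $F$, i.e.\ an upper bound on the decay rate of $\hat U$ at the edge point. To get that, one must dominate $\hat U$ by an $\mathcal{\hat A}$-harmonic function vanishing on a cone \emph{contained in} $E$ with apex at the edge: since that cone contains a subcone of definite aperture (because $E\cap \bar B(y,\rho/2)$ contains a ball of radius $\rho/C''$), its complement sits inside a rotated $K(\al)$ with $\pi-\al\geq \bar C^{-1}$, the dominating function is homogeneous of some degree $\ep_1\geq \la_1(\al)$, and \eqref{1.7} then gives $\ep_1\geq 1-(n-1)/p+C^{-1}$. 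This is exactly the paper's construction of $\Ga$, $\Ga_1$, $V$, $V^*$ and the chain \eqref{5.29}--\eqref{5.36}. You instead inscribe $K^*\cong K(\al)$ \emph{into} $\rn{n}\sem E$ and compare $\hat U$ with the eigenfunction $\ti u_1$ of $K^*$. A maximum-principle comparison there only yields $\ti u_1\leq C\hat U$, a \emph{lower} bound for $\hat U$, which is useless for \eqref{5.28}; and making the comparison two-sided via Lemma \ref{lem3.2} is not legitimate, because $\hat U$ and $\ti u_1$ vanish on different sets ($\ar E$ versus $\ar K^*$), whose intersection may be only the apex $z^*$ rather than a relatively open piece of the boundary of a common Lipschitz domain, so the hypotheses of the boundary Harnack inequality fail. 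Consequently the key display $|\nabla \hat U(\hat x)|\leq C\de^{\la-1}\max_{\ar B(z^*,c)}\hat U$ is unsupported.

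Two secondary points. First, you never treat the intermediate regime $\rho<100\,b$ (chord length comparable to the inradius $b$ of $\ti S$), which the paper disposes of separately with $\ep_1=1$; and after normalizing $b=1$ the actual distance from $\hat x$ to the edge is $\approx \de\cdot\rho$, not $\approx\de$, since $\de$ is a \emph{reduced} (ratio) distance and $\ti S$ may be very eccentric — this is why the paper introduces $\bar\de=|\pi(\hat x)-\pi(z)|/\rho$ and works at the scales $\bar\de\ti C b$ and $\ti C b$ in \eqref{5.34}--\eqref{5.36}. Second, your aperture $\ga$ read off from the support cone of $E$ at $z^*$ controls a cone \emph{containing} $E$, not one contained in it; the aperture that actually matters comes from the inradius of $E\cap B(y,\rho/2)$ relative to $\rho$ (equivalently, the eccentricity bound $\ti e$), which is what guarantees $\rn{n}\sem\Ga\subset K(\al)$ with $\pi-\al\geq \bar C^{-1}$.
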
  
\begin{proof} 
   As  in  Lemma  6.13  of    \cite{J}  we  note    that  if  $  \de ( \hat x, F )  \approx 1 $  then       \  \eqref{5.28}   
 follows from Lemmas \ref{lem5.6} and \ref{lem5.7}.   Thus we assume that 
$ \de (\hat  x,  F ) < <  1 $  and choose  $ y, z  \in F $  so  that  $  \de ( y, F ) \approx 1 $ 
 and  $ \pi (z ) = z   \in F \cap  \ti S $  with $ \pi (\hat x ) $ lying on the line segment  
 from $ \pi ( z ) $ to  $ \pi ( y ). $  
Let $ \rho =  | \pi ( z ) - \pi ( y ) | . $   We  note  that  if   $  \rho < 100\, b ,  $  
then  \eqref{5.28} follows from   Lemmas \ref{5.7},  \ref{5.6}  with  $ w = \hat x, y, $  \eqref{5.26a}, 
 and   Harnack's inequality for $ \mathcal{\hat A} $-harmonic   functions  with  $ \ep_1 = 1. $  
 Thus we assume $ \rho  \geq   100 \, b.$       
Then from the  John  ellipsoid  theorem mentioned  below \eqref{5.10}   we  deduce     
\[
  \bar \de  :=  | \pi (\hat  x ) - \pi ( z) | / \rho   \approx   \de ( \hat x, F ) 
  \]    
  so  we assume, as we may,  that  $ | \pi ( \hat x )  -  \pi ( z ) |  <  \rho/100. $    Next  we  define   the cone:  
\[  
\Ga  =  \{    \pi (z)  + s ( \ze - \pi ( z ) ) :  \ze  \in  E  \cap \bar B ( y,  \rho/2 )  \mbox{ and }  0 < s < \infty \}. 
 \]   
From  convexity of  $ E $    we see that  $ E \cap  B  ( y,  \rho/2), $  contains  a  ball of  radius  $  \rho/C'',  $  where  $ C'' $ depends only on the data and $ \ti e.$    
        Let   $   X  $  denote a  point that lies $ \rho  $ distance from $ \Ga \cup E $  and   at most  
$ 2 \rho $  from  $ y . $    As  in   the proof  of  Theorem \ref{thmA}  we first  construct    $   V  $   
a   positive  $  \mathcal{\hat A}$-harmonic function in  $ \rn{n} \sem  \Ga $  which is continuous in $ \rn{n} $  with 
$ V \equiv 0 $ on  $ \Ga $ and $ V ( X ) = \hat U ( X ). $   Second we use the fact that  $  \pi ( z )  = z $ and the boundary 
 Harnack inequality in   Lemma \ref{lem3.2} as in the proof of  Theorem \ref{thmA}  
 to  deduce  that   $ V $ is unique, homogeneous, and 
\begin{align}
  \label{5.28a}   
  V (z +  s  (\hat w - z)  )  = s^{\ep_1}    V (  \hat w  )  \mbox{   for some  $ \ep_1 > 0 $  whenever  $ \hat w  \in \rn{n} \sem \{z\},  s > 0$.} 
  \end{align}   
From the discussion  below the  definition of  $ \Ga $  we observe  that  $ \rn{n} \sem \Ga $  is contained  in  a   
translation  and  rotation of  $ K (  \al ) $  for some $ \al  \in ( 0, \pi) $  with  $ \pi  - \al \geq  \bar C^{-1}$.
  Using this fact,   Lemma \ref{lem3.2},  and  Theorem \ref{thmA}    we  see that  
  if   $ p > n -1, $ $ \ep_1 -   1 +  (n -  1  )/p  \geq C^{-1}. $   If  $ 1 <  p \leq n - 1, $ 
  one can use a  compactness argument or  an argument as in  \cite{KM}  to  show that  
$ \ep_1 \geq C^{ - 1}. $  Let $ \Ga_1 $  be the convex hull of   $   E  \cap \bar B  ( y,\rho/2  )  $ and $ z. $  
Also let  $   V^* $ be the  $ \mathcal{A}$-capacitary function for $\rn{n} \sem  \Ga_1 $  when $ 1 <  p <  n $  while $ V^* $ is the 
$ \mathcal{A}$-harmonic Green's function for  $  \rn{n} \sem \Ga_1 $  when $ p \geq n. $  
Define  $  \hat f$ and $\mathcal{\hat A}  $ as above \eqref{5.20} and observe that 
$  \hat V^* = 1 -  V^* $ is  $ \mathcal{\hat A}$-harmonic when $ 1 < p < n $  while  
$ \hat V^*  = V^* $ is  $  \mathcal{\hat A}$-harmonic when $ p \geq n  $  with  
continuous boundary value $ 0 $  on $ \Ga_1. $  We first let    
\[
 V'  = \frac{ \hat U ( X )}{ \hat V^* ( X ) } \,  \hat V^* 
 \]
 and claim that
\begin{align}
\label{5.29}  
\begin{split}
&(a)  \hs{.2in}    \hat U   \leq  C  V' \quad \mbox{in}\, \,     B ( y, 4\rho ) \sem  E .  \\  
&(b)  \hs{.2in}   V \approx  V'  \quad  \mbox{in}\, \,    B ( z,  \rho/4 ) \sem   \Ga .  \\
&(c) \hs{.2in}  \hat U  \approx  V' \approx V   \quad  \mbox{in}\, \,   B ( y,  \rho/8 )  \sem E \,.
\end{split}
\end{align}  
To  prove  \eqref{5.29} $(a)$  observe    from \eqref{2.8} that     
\[  
\max_{ \bar B ( y, 4 \rho ) }    \hat U   \leq   c  \, \hat  U ( X ) = c \,  V' ( X ) \leq c^2  \max_{\bar B (y, 4 \rho )}  V'   . 
\] 
 This inequality,  $  \Ga_1  \subset E, $ and the boundary maximum principle for 
  $  \mathcal{\hat A}$-harmonic functions give  \eqref{5.29} $(a)$.   
On the other hand,    \eqref{5.29} $(b)$   follows from   \eqref{2.8},  Harnack's  inequality for   $ \mathcal{\hat A}$-harmonic   functions,    
    Lemma  \ref{lem3.2},   and the fact that  
    $ \Ga_1 \cap \bar B ( z, \rho/ 2) =  \Ga \cap \bar B( z, \rho /2)$. Finally,
\eqref{5.29} $(c)$   follows from  these  inequalities and the fact that 
\[        \Ga  \cap  \bar B ( y, \rho/ 2)  =    \Ga_1 \cap \bar B ( y, \rho/ 2) =  E  \cap \bar B( y, \rho /2). \]
We  conclude from  \eqref{5.29}  that  
\begin{align}
\label{5.30}  \hat U  \leq  C  V   \mbox{ in }   B ( z,  \rho/4)  \sem E \quad  \mbox{while}\quad   \hat U  \approx  V  \, \,  \mbox{in}\, \,    B ( y,  \rho/8 ) \sem E.
\end{align}   
 If  $ \ti C  \geq  1 $  is large enough depending on $ \ti e $ and the data,    then from 
 \eqref{5.30},  the fact that $ \rho  \geq 100 \,  b, $    \eqref{5.26a},     Harnack's  inequality for 
 $ \mathcal{\hat A}$-harmonic functions,      and  Lemma  5.6    we deduce that    
\begin{align}
 \label{5.31} b^{-1}  \,   V  (    \pi ( y ) -   \ti C b e_n   )  \approx  b^{-1}  \,   \hat U (  \pi ( y ) -   \ti C b e_n    ) \approx  \min_{F}  | \nabla \hat U | 
 \end{align}
and    
\begin{align}
\label{5.32}  
b^{-1}  \,   \hat  U    (    \pi ( \hat x )  -   \ti C b e_n   )       \, \leq \,    C \, b^{-1}  V   ( \pi (\hat x)   -   \ti C b e_n    ).
\end{align} 
Next we  draw  the  line segment  $\hat l $  from  $ z  $  to    $   \pi ( y )  -    \ti C  b e_n. $     
From   similar  triangles and the definition of   $  \bar  \de  $ below  \eqref{5.28},  we see that    
$      \pi(\hat x )  -    \bar \de  \ti C  b  e_n     \mbox{ lies on $ \hat l $.  }  $
 From this  observation and   homogeneity of  $ V $ we   get      
\begin{align}
 \label{5.34}    
 V ( \pi(\hat x )     -  \bar \de  \ti C b e_n  )     = \bar\de^{\ep_1}  V (  \pi (y) -   \ti C  b \,e_n ).
 \end{align}     
Now   since $ \Ga $    is convex we can   repeat the argument  given  in    Lemma  \ref{5.6}    
with  $  \hat U  $    replaced   by   $V$  to get      \eqref{5.23}   with    $ \hat U $   replaced by     
$ V , $ $  w  $ by $  \pi (\hat x )  $, and   $  r $  by   $ \bar \de \ti C  b,   \ti C  b,  $ provided $ \ti C$ is large enough.         We obtain    
\begin{align}
 \label{5.35} 
\begin{split}
  \min_{ B ( \pi ( \hat x ) ,  \ti C b) \cap \partial\Ga} |\nabla  V |^p      & \approx   b^{-p}  V  (    \pi(\hat x )  -    \ti C  b  e_n   )^p, \\
\min_{ B  ( \pi ( \hat x ) , \bar \de \ti C    b)  \cap  \partial\Ga } |\nabla  V |^p      & \approx   (\bar \de  b)^{-p}  V  (    \pi(\hat x )   -      \bar \de  \ti C  b  e_n   )^p  \, .
\end{split}
\end{align}
From  Lemma \ref{5.6},  \eqref{5.31}-\eqref{5.35},   Harnack's  inequality for $ \mathcal{\hat A}$-harmonic functions,    we see that  
\begin{align}
 \label{5.36}  
 \begin{split}
 b^{1-n}  \,    \int_{  \De ( \hat x, b ) } |\nabla  \hat U |^p  d \mathcal{H}^{n-1} & \approx     \min_{ \De (  \hat x  ,   b)} |\nabla  \hat U |^p   \approx b^{-p}  \hat U  ( \pi (\hat x )  -    \ti C  b  e_n   )^p   \\  
& \leq    C \,   ( \bar \de  b)^{-p} \,  V ( \pi(\hat x )   -    \bar \de  \ti C  b  e_n   ) ^p  \\
& \leq  C'   \, \bar  \de^{( \ep_1 - 1) p} \,  \,   \min_{F}  | \nabla \hat U |^p
\end{split}
\end{align} 
where $  C, C' $  depend only on the  data and $ \ti e. $  Thus   Lemma \ref{5.8} is valid. 
\end{proof}  

To complete the proof  of  Theorem  \ref{thmB}  we need   Lemma  6.16  from  \cite {J}  which in our  situation can be  stated as following lemma.     
\begin{lemma} 
\label{lem5.9}  
With the same  notation as  in  Lemma   \ref{lem5.8}   choose   a coordinate system  with  axes  parallel to  the axes of  an  optimal inscribed ellipsoid  
contained  in  $ \ti S. $   Let   $  \mathcal{Q} $  be a  tiling of  $ \ti S $  by  closed   cubes $ \subset \sum_1 $    and  of   side-length  $ s   \leq b $  with  sides  parallel to  the  coordinate  axes.  If 
$ Q  \in  \mathcal{ Q}, $    let   $ Q^* $  be  the  cube  concentric to  $ Q $  with    side-length    $ 10 ( (n-1) ! )^2  s $    and  let 
\[  
\de^* ( Q ) =  \max_{y \in Q^* \cap \ti S}    \de ( y,  \ti S ).  
\]  
There exists  $ c (n) \geq 1$  such that   
\begin{align}
  \label{5.37}    
   \sum_{ \{ Q : \, \de^* (Q)  <   \si \} }   \mathcal{H}^{n-1} ( Q )     \leq   c ( n )  \si  \mathcal{H}^{n- 1} ( \ti S )
  \end{align}
where $ C  $  depends only  on the data and  $ \ti e. $     
\end{lemma}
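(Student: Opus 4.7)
My plan is to reduce the estimate \eqref{5.37} to a dimension-only collar bound on a convex set that, after an affine change of variables, is sandwiched between two Euclidean balls, and then to transfer this back to the cube tiling via a packing argument.

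First, I would exploit affine invariance. By John's ellipsoid theorem applied to the convex set $\ti S \subset \Si_1$, there is an affine map $T : \Si_1 \rar \rn{n-1}$, centered at the centroid of the optimal inscribed ellipsoid, such that $B'(0, c(n)) \subset T(\ti S) \subset B'(0, 1)$. A direct reading of Definition \ref{def5.2} shows that reduced distance is affine invariant, $\de(y, \ti S) = \de(Ty, T(\ti S))$, and $(n-1)$-dimensional Hausdorff measure pushes forward by the single constant $|\det T|$. Both sides of \eqref{5.37} therefore scale by the same factor under $T$, so it suffices to prove the estimate for the image tiling and the normalized set. In what follows I assume $\ti S$ already satisfies $B'(0, c(n)) \subset \ti S \subset B'(0, 1)$.

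Second, in this normalized setting I would establish the pointwise collar estimate
\[
\mathcal{H}^{n-1}\bigl( \{ y \in \ti S : \de(y, \ti S) < \si \} \bigr) \leq c_1(n)\, \si.
\]
The key ingredient is the comparability $\de(y, \ti S) \approx d(y, \ar \ti S)$ with constants depending only on $n$. The lower bound $\de(y, \ti S) \geq d(y, \ar \ti S)/2$ is immediate, since along every chord of $\ti S$ through $y$ both segments have length at least $d(y, \ar \ti S)$ and total length at most the diameter $2$. For the upper bound I would draw the chord from the closest boundary point through $y$ and use the inner ball $B'(0, c(n)) \subset \ti S$ to force the opposite chord endpoint at a definite distance from $y$. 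Consequently $\{\de(\cdot, \ti S) < \si\}$ is contained in a Euclidean $C(n)\si$-tubular neighborhood of $\ar \ti S$, whose $(n-1)$-volume is at most $C(n)\si$ by convexity and $\ti S \subset B'(0,1)$. Since $\mathcal{H}^{n-1}(\ti S) \gtrsim 1$, the displayed inequality follows.

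Third, I would transfer the collar estimate to the sum over cubes. If $Q$ is a cube of the tiling with $\de^*(Q) < \si$, then every $y \in Q^* \cap \ti S$ (and in particular $Q \cap \ti S$) satisfies $\de(y, \ti S) < \si$. Picking a representative $y_Q \in Q \cap \ti S$, which is nonempty because $\mathcal{Q}$ tiles $\ti S$, the family $\{y_Q\}$ is $s$-separated inside the $\si$-collar. The expansion factor $10((n-1)!)^2$ in the definition of $Q^*$ is precisely calibrated so that $\de^*(Q) < \si$ forces the whole region $Q^* \cap \ti S$ into a Euclidean layer of width $\lesssim \si$, and then a standard Besicovitch-type packing argument yields
\[
\sum_{\{Q : \de^*(Q) < \si\}} \mathcal{H}^{n-1}(Q) \leq c(n)\, \mathcal{H}^{n-1}\bigl(\{ y \in \ti S : \de(y, \ti S) < c'(n)\si\}\bigr) \leq c''(n)\, \si\, \mathcal{H}^{n-1}(\ti S),
\]
which is \eqref{5.37}. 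The main obstacle will be the third step, namely controlling cubes that straddle $\ar \ti S$ (where $Q$ itself may extend partly outside the body) so that their $(n-1)$-measures can still be absorbed by $\si\, \mathcal{H}^{n-1}(\ti S)$ rather than by a coarser term of order $s$ or $b$; it is here that the enlargement to $Q^*$ and the assumption $s \leq b$ must be used to tie each straddling cube to a genuine point of the collar, uniformly in $s$ and $b$.
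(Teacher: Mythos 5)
First, a point of reference: the paper does not actually prove Lemma \ref{lem5.9} — it is quoted from Jerison \cite{J} (Lemma 6.16 there) — so your proposal must stand on its own. Your skeleton (John normalization, comparability of reduced and Euclidean distance, collar estimate, packing) is the right one, and the normalization and collar steps are essentially sound, with one repair needed: for the upper bound $\de(y,\ti S)\leq C(n)\,d(y,\ar \ti S)$ the chord through the \emph{nearest} boundary point does not work. Near a conical boundary point of a normalized body (e.g.\ $\ti S=\mbox{conv}\,(B'(0,\rho)\cup\{e_1\})$ with $\rho=c(n)<1/\sqrt 2$, which is compatible with $B'(0,c(n))\subset \ti S\subset B'(0,1)$), the chord in the inward normal direction can be arbitrarily short, so its far endpoint is not at definite distance from $y$. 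One must instead use the chord through $y$ and the center of the inscribed ball: its far segment has length at least $\rho$, and its near segment is at most $C(n)\,d(y,\ar\ti S)$ because $\ti S$ contains the cone over $B'(0,\rho)$ with apex at the near endpoint.

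The genuine gap is exactly the step you flag as ``the main obstacle,'' and it is not a technicality: it is the entire content of the lemma beyond the routine collar bound. For cubes $Q$ straddling $\ar\ti S$, $\mathcal{H}^{n-1}(Q)$ cannot be charged to $\mathcal{H}^{n-1}(Q\cap\ti S)$: when $\ti S$ is eccentric a straddling cube may satisfy $\mathcal{H}^{n-1}(Q\cap\ti S)\ll \mathcal{H}^{n-1}(Q)$ (the body meets $Q$ only in a thin wedge), and after your normalization the cubes become boxes of unbounded aspect ratio, so ``the family $\{y_Q\}$ is $s$-separated'' no longer produces disjoint comparable balls inside the collar and the Besicovitch count does not close. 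This is precisely where the enlargement to $Q^*$, the constant $10((n-1)!)^2$, and the hypothesis $s\leq b$ must do real work: one needs a quantitative claim of the type that for every tiling cube $Q$ the set $Q^*\cap\ti S$ captures measure at least $c(n)^{-1}s^{n-1}$, all of it at reduced distance at most $\de^*(Q)$, so that $\sum\mathcal{H}^{n-1}(Q)$ is absorbed into $\mathcal{H}^{n-1}(\{\de<\si\})$ with overlap at most $(10((n-1)!)^2)^{n-1}$. Proving such a claim for John-eccentric bodies (naive attempts, e.g.\ shrinking toward the centroid, produce sets of measure $s^{n-1}\prod_i(a_i/a_1)$, which degenerates with the eccentricity of the John ellipsoid) is the nontrivial part of Jerison's Lemma 6.16. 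As written, your proposal asserts that the expansion factor ``is precisely calibrated'' without an argument, so the proof is incomplete at its decisive point.
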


Let  $ \ep_1 $   be as   in  Lemma     \ref{lem5.8} and put $ \ep_0  =  \ep_1  $  if  
 $  1 < p  \leq  n - 1 $   
while   $  \ep_0   =  \ep_1  - 1 + (n-1)/p $    if  $  p  >  n - 1  $.       To prove   \eqref{5.22}   
and thus complete the proof of Theorem \ref{thmB}  we   first  note  
 from   Lemma  \ref{lem5.9} that   if  $ \ep \in (0, 1), $    
\begin{align}
  \label{5.38}      
   \sum_{ \{ Q \in \mathcal{Q} \} }   \de^* (Q)^{- 1 + \ep}     \mathcal{H}^{n-1} ( Q )   \leq   C (\ep)    \mathcal{H}^{n-1} ( \ti  S  ),
  \end{align}   
as follows   from   summing  separately  over  cubes  $ Q \in \mathcal{Q} $  with  $ \de^* (Q)   \leq  2^{-k} s,  k = 0,  1, 2,  \dots  $  
Second  from  Lemmas \ref{5.6}, \ref{lem5.8},  we   deduce    that if  $ \hat y, \hat z  \in F $  and $ \pi ( \hat y ), \pi(\hat z)  \in  Q^* $,  then    
\[    
\max_{\hat \ze \in \{ \hat x,  \hat y \}}  b^{1-n}      \int _{\De ( \hat \ze , b) } |\nabla \hat U|^p   d \mathcal{H}^{n-1}  \leq C \min_{\hat \ze \in \{ \hat y, \hat z  \}}    
\de^{ - p + \ep_1 p } ( \hat \ze , F )   \min_F  |\nabla \hat U |^p  .   
\]   
Hence,     
\[   
b^{1-n}      \int _{\pi^{-1}(Q)\cap F}      |\nabla \hat U|^p  d\mathcal{H}^{n-1}  \leq  C   \de^* (Q)^{- p + p \ep_1}   \min_F  |\nabla \hat U |^p.  
\]            

 Using     \eqref{5.38}   with   $    s    =  b/2,  $  and the above   inequality   we conclude that   
\begin{align}
 \label{5.39}  
 \begin{split}
  \int_{F}  \de^{1 -  \ep_0}  ( \cdot, F)   \,   |  \nabla \hat U |^p   \,  d \mathcal{H}^{n-1}&  \leq  C      
  \sum_{Q \in \mathcal{Q}}  \de^* (Q)^{1-\ep_0}  \int_{\pi^{-1}(Q)\cap F}    |   \nabla  \hat U|^p  d\mathcal{H}^{n-1}    \\
&  \leq    C^2     \sum_{Q \in \mathcal{Q}}  \de^* (Q)^{1 - \ep_0   - p  + p \ep_1}  \mathcal{H}^{n-1} (Q)    \min_F   |\nabla \hat U |^p      \\
& \leq   C^3   \mathcal{H}^{n-1} ( F )    \min_F | \nabla \hat U |^p 
    \end{split}
    \end{align}
as we obtain  from  \eqref{5.38} if    $   1 < p \leq 2  $  or  $ n = 2, 3,  $ and $ p  >  2. $   Indeed ,   
    $ \de^*(Q)^{ 1 -  \ep_0   -  p  +  p \ep_1}   \leq \de^*(Q)^{  - 1  + (p-1) \ep_0}  $ if $ 1 < p  \leq 2 $  while  
 $ \de^*(Q)^{ 1 -  \ep_0   -  p  +  p \ep_1}  =  \de^*(Q)^{ 2 - n   + ( p - 1 ) \ep_0 }  \leq \de^*(Q)^{  - 1  + (p-1) \ep_0}$  
if  $ n = 2, 3$ and $ p > 2.$  Thus   \eqref{5.22} is valid  and the proof of  Theorem \ref{thmB}  is  now complete.

 \section{Closing Remarks}  
 \label{section6}     
 Here we discuss possible  generalizations  of  Theorems  \ref{thmA} and \ref{thmB}.   First can  any of the  hypotheses on $ f $  in  \eqref{1.5} $(a), (b), (c) $ be weakened or  even removed ?  For  example can $ (c)  $ be replaced by  the assumption that $ f $  is  $ C^2 $  in  $ \rn{n} \sem \{0\}? $  Does one  need  $  {\mathcal A}  =  \nabla f $ or is it enough to assume 
$  {\mathcal A} :  \rn{n}\sem \{0\}  \rar  \rn{n} $  is a homogeneous  $ p - 1 $ vector field  with  continuous first partials satisfying structure conditions similar to  1.5 $(b)$?  Does one really need uniform ellipticity in  $(b)?$ 
    We cannot   give a  quick answer to any of these questions,   still    we  note that existence and  uniqueness for $ u$  as   in Theorem  \ref{thmB} made important use of  boundary Harnack  inequalities  from  \cite{AGHLV} and   \cite{LN2}.   In  both references,  theorems are stated  for  an  $ f $  satisfying  \eqref{1.5}.  However \cite{AGHLV} is  concerned  with proving  boundary  Harnack  inequalities for  much more general   Lipschitz   domains.    Using  smoothness of  $ \ar  K ( \al),  $  it appears likely that at least for $ 0 <  \al < \pi,  $  it  would be enough to assume $ f $ has continuous second partials,   rather than Lipschitz second partials  in \eqref{1.5} $(c)$.    Also  in   \cite{LN2}  the emphasis was on  domains, a portion of    whose boundaries,  are $ k $ Reifenberg flat,    $  1 \leq  k < n - 1. $    If  $  2 \leq k  <  n - 1, $  the authors  of this paper needed    an assumption  similar to  \eqref{1.5}  in order to construct a   lower dimensional  barrier, which ultimately  provided a lower bound  for  a certain boundary  Harnack  inequality.  if  $ k = 1 $ though  these considerations can be avoided and one can use the same argument as in the  proof of  \eqref{4.18}  to   show  for example that   \eqref{3.20} holds.  Moreover,  this argument is  valid for  more general $ \mathcal{A} $   vector fields and  corresponding  $ \mathcal{A}$-harmonic  functions as outlined above.       The  proof  that   $  \la_1 (  \pi  )  =   1  -  \frac{(n-1)}{p}$  when  $  p  >  n - 1,$  made important  use of  Lemma \ref{lem4.1}.   This Lemma  was proved in  \cite{AGHLV}  for  $ n - 1  < p <   n $  and in  \cite{ALSV} for  $ n \leq p < \infty .$   In  both papers  it was assumed that   \eqref{1.5}  held for $ f $  primarily  in order to prove uniqueness in certain  Brunn-Minkowski type inequalities for  $\mathcal{A}$-capacity   and in the  proof  of  Theorems   \ref{thm1} and \ref{thm2}.     The proof of  Lemma \ref{lem4.1} in either  paper  follows  from a   Rellich type inequality,  which  could easily hold  if  $ f $ has continuous second partials  and  perhaps also is true   for more general  $ \mathcal{A}$ than when $\mathcal {A} = \nabla f$.

 In the  proof  of  Theorem \ref{thmB}   we first  show in \eqref{5.18}    that  $ \ar E $  is  locally  $ C^{1, \al'}$  when $   \He $ is  bounded  above  and  below  on   $\mathbb S^{n-1}$.    We then  assumed  $  \He \in  C^{0, \hat \al}    ( { \mathbb S^{n-1} })   $ in order  to  complete the proof  of  Theorem \ref{thmB} when $ k = 0. $  If instead  of 
   $  \He \in  C^{0, \hat \al}    ( { \mathbb S^{n-1} }),$    one  assumes  that  $ \He $ is only  continuous on    $ { \mathbb S^{n-1} },    $   then one can use results from   \cite{C3}  to conclude that  $  \ar  E $ is locally  $ W^{2, q} $  for $ 1 < q  < \infty.$ As for possible generalizations of  Theorem \ref{thmB},  recall that  Theorem  \ref{thmA}  was  used   only to prove Lemma \ref{lem5.8}, which was then used  in  \eqref{5.39}.  In \eqref{5.39}  one needs    to   estimate   the second  line of  this display   from  above by the left-hand  side of  \eqref{5.38},   which  is only  possible for the values of  $ p $ in  Theorem \ref{thmB},  as explained below  \eqref{5.39}.      
Moreover,  \eqref{5.39}  was the  final  step in the proof of   \eqref{5.22}   which was the key  inequality needed  to  eventually  conclude  Theorem  \ref{thmB}.     Examples  from  Section 8  of   \cite{J},   show  that for  $ p = 2$,   the exponent $ \ep_0  $    in \eqref{5.22}  is dependent on the  eccentricity of  $ E.$   Using  Theorem \ref{thmA} and proceeding operationally  it  appears likely   that  the  same example   implies   when   $   n \geq 4$ and $p \geq   1  -  
(n-1)/p$ that 
\begin{align}
\label{6.1}  
\lim_{t \to 0}  \frac{  { \ds \int_{F}} \de ( x, F)   d { \mathcal H}^{n-1} }{\,  {\mathcal H}^{n-1} ( F )    \,  { \ds \min_F |\nabla  \hat U |^p}}  =  \infty
\end{align}  
where $ 0 \in \ar E $  and  $ F = \{x \in \ar{E} :  x_n < t \}. $   To briefly outline this example,   
 let  $ x'  =  ( x_1, \dots, x_{n-1} ) $  and let 
 \[    
 Q  =  \{x :\,  | x_i| \leq s, 1\leq i \leq n - 1,  x_n = t_1  \}\quad \mbox{and}\quad  I   =  \{ ( x_1, 0, \dots, 0) :\,  - 1 \leq x_1 \leq 1 \},  
 \] 
where  $0 <  t_1  <  < s < < 1.$    Let  $ E $  be  the  convex hull  of  $ Q \cup  I. $     Repeating the  argument in 
  \cite{J}    through display (8.3)  with  $ h  =  |\nabla \hat U | $   one uses Theorem  \ref{thmA}  to get
\begin{align}
\label{6.2}   
\min_{B(x_1, t)} |\nabla \hat U | \geq   c^{-1}  ( 1 - |x_1| )^{ - (n-1)/p \, +  \, \eta  } \quad \mbox{and}\quad       \min_{B(x_1, t) \cap  F }   \de ( x, F )  \geq     c^{-1}  ( 1 -  |x_1| )     
\end{align}
for  $x_1 \in  I$   where  $  \eta > 0 $  can be arbitrarily small and  $ c,$ may depend on $ s$, $t_1, $  and the data.  Now 
  \[  
  \pi ( F )  =  \{ x :   |x_1| < 1 -  \frac{t}{t_1}, \, \,   |x_i| <\frac{ts}{t_1}, \,   2 \leq i  \leq n -1, \,  x_n  =  t \}. 
  \]      
  Using    this  observation and  \eqref{6.2}  it follows  that 
\begin{align}
\label{6.3}   
\int_{F} \de ( x, F )  | \nabla \hat U |^p  \,  d {\mathcal H}^{n-1} \geq c^{-1} t^{n-2}   \int_0^{1 -  2t/t_1}  ( 1 - x_1 )^{ 2 - n + p  \eta } dx_1 =  c_+^{-1} t^{ 1  + p \eta}. 
\end{align}      
  Moreover, the  argument in  \cite{J}  can be used  to  get 
\begin{align}
\label{6.4}   {\mathcal H}^{n-1} (F) \,   \min_{F}  | \nabla \hat U |   \,  \leq \,  \ti c   \, t^{n -  2 - \eta_1}
\end{align}
  where $  \eta_1 > 0  $  can also be  arbitrarily small  and  $ \ti c,  c_+,$  have the same dependence as $ c $ in  \eqref{6.2}.     
 Choosing $ \eta, \eta_1,  $   small enough and then fixing  $ s, t_1,  $   we deduce   \eqref{6.1} from  \eqref{6.3}, \eqref{6.4}.

 We note,  however  that  $ \mu,  $  as defined in   Theorems  \ref{thm1}, \ref{thm2},  corresponding to  $ E$ in the example above,   does  not satisfy the  hypotheses of  Theorem  \ref{thmB}  since for example   $  \mu ( \{e_n\} )  >  0. $   Thus it is an interesting open question whether  Theorem  \ref{thmB}  remains true when    $ 2  < p < \infty $  and   $ n  \geq 4. $ Perhaps  one  should  first  try  to  answer  this question  under the  additional assumption   that   $  \|  \He  - 1 \|_{\infty}   \leq  \ep ,  \ep >0,$  small,  since   Theorems \ref{thm1},  \ref{thm2},  give  $ E  = $ ball when   $ \He  \equiv 1.$  Somewhat similar questions have recently been considered in  generalizations of  the work of  Caffarelli in \cite{C3} on the  Monge-Amp{\`e}re equation (see Theorems 3.13 and 3.14 and Corollary 3.15 in \cite{Figalli18}).

\section*{Acknowledgement} 
Part  of  this  research was done while the  second author was visiting TIFR  in Bangalore India.  The second author thanks  TIFR for their gracious hospitality.

\bibliographystyle{amsbeta}

\bibliography{refALV}  

\end{document}